\DeclareMathAlphabet{\cat}{OT1}{cmss}{m}{sl}
\newtheorem*{theorem*}{Theorem}
\newtheorem{theorem}{Theorem}[section]
\newtheorem{proposition}[theorem]{Proposition}
\newtheorem{lemma}[theorem]{Lemma}
\newtheorem{corollary}[theorem]{Corollary}
\theoremstyle{definition}
\newtheorem{remark}[theorem]{Remark}
\newtheorem{example}[theorem]{Example}
\newtheorem{dfn}[theorem]{Definition}
\newcommand{\tens}{\otimes}
\newcommand{\CH}{\operatorname{CH}}
\renewcommand{\Im}{\operatorname{Im}}
\newcommand{\ind}{\operatorname{\hspace{0.3mm}ind}}
\newcommand{\Spec}{\operatorname{Spec}}
\newcommand{\Spin}{\operatorname{Spin}}
\newcommand{\HSpin}{\operatorname{HSpin}}
\newcommand{\PGO}{\operatorname{PGO}}
\newcommand{\SL}{\operatorname{SL}}
\newcommand{\GL}{\operatorname{GL}}
\newcommand{\SO}{\operatorname{SO}}
\newcommand{\Z}{\mathbb{Z}}
\newcommand{\N}{\mathbb{N}}
\title[Upper Bounds on the Torsion Index of Half-Spin Groups] 
{Upper Bounds on the Torsion Index of Half-Spin Groups}
\author
[S.~Baek, R.~Devyatov] {Sanghoon Baek, Rostislav Devyatov}
\address[Sanghoon Baek]{Department of Mathematical Sciences, 
	KAIST,
	291 Daehak-ro, Yuseong-gu,
	Daejeon 305-701,
	Republic of Korea}
\email{sanghoonbaek@kaist.ac.kr}
\urladdr{https://mathsci.kaist.ac.kr/~sbaek/}
\address[Rostislav Devyatov]{Laboratory of Algebraic Geometry and its Applications, 
	Department of Mathematics,
	National Research University Higher School of Economics,
	6 Usacheva str.,
	Moscow 119048,
	Russian Federation}
\email{deviatov@mccme.ru}
\urladdr{https://www.mccme.ru/~deviatov/}
\thanks{The work of both authors was supported by the Samsung Science and Technology Foundation under Project Number SSTF-BA1901-02. Rostislav Devyatov thanks KAIST for hospitality and support. He was also partially supported by the HSE University Basic Research Program and expresses his gratitude to the Laboratory of Algebraic Geometry and its Applications at HSE University for their hospitality.}
\date
{\today}
\begin{document}
	
\begin{abstract}
The torsion index of split simple groups has been extensively studied, notably by Totaro, who calculated the torsion indexes of the spin groups and $E_{8}$ in \cite{Totaro_E8} and \cite{Totaro_Spin}, respectively. The aim of this paper is to provide upper bounds for the torsion index of half-spin groups, the only remaining case in the calculation of torsion indexes for split simple groups. We present general upper bounds for the torsion index of half-spin groups, showing that, except for certain exceptional cases, it is at most twice that of the corresponding spin groups. For these exceptional cases, the torsion index is bounded above by at most $2^3$ times that of the spin groups. Our results also reveal that in many cases, the torsion index of half-spin groups coincides with that of the spin groups.
\end{abstract}

\maketitle
\tableofcontents

\section{Introduction}\label{secone}
Consider a variety $X$ over a field $k$. The \emph{index} of $X$, denoted by $\ind(X)$, is defined as
\begin{equation*}
	\ind(X)=\gcd \{\,[K:k]\,\,|\,\,K/k \text{ is a finite extension such that } X(K)\neq \emptyset \},
\end{equation*}
or equivalently, as the gcd of the degrees of all closed points of $X$.

Now let $G$ be an algebraic group over a field $k$. The \emph{torsion index} of $G$, denoted by $\tau(G)$, is defined by
\begin{equation}\label{torsionind}
	\tau(G)=\operatorname{lcm}\{\, \ind(E)\,\,|\,\, E \text{ is a } G\text{-torsor over a field extension of } k\}.
\end{equation}
The prime divisors of the torsion index $\tau(G)$ are called the \emph{torsion primes} of $G$. When $G$ is of type $B$ or $D$, the only torsion prime is $2$. We denote the $2$-adic valuation of $\tau(G)$ by $\tau_{2}(G)$. In particular, if $G$ is a split semisimple group, then $\ind(E)=\ind(E/B)$, where $B$ is a Borel subgroup of $G$. Thus, one can replace $\ind(E)$ by $\ind(E/B)$ in (\ref{torsionind}). Indeed, one can show that
if  $E$ is a {\em generic $G$-torsor}, i.e., the generic fiber of the quotient map $\GL(n)\to \GL(n)/G$ induced by an embedding $G\hookrightarrow \GL(n)$ for some $n\geq 1$, then 
\begin{equation*}
	\tau(G)=\ind(E/B).
\end{equation*}

Let $G$ be a split semisimple group over a field $k$, and let $T$ be a split maximal torus of $G$. The symmetric algebra $S(T^{*})$ of the character group $T^{*}$ is isomorphic to the Chow ring of the classifying space of $T$,
\begin{equation*}
	S(T^{*})\simeq \CH(BT),	
\end{equation*}
where the isomorphism is given by mapping $\chi\in T^{*}$ to $c_{1}(\chi)$.

Let $P$ be a special parabolic subgroup of $G$ containing $T$, i.e., $H^{1}(K,P)=\{1\}$ for any field extension $K/k$, and let $W_{P}$ be the Weyl group of $P$. Then, the canonical morphism 
\begin{equation}\label{eq:chbpchbtwp}
	\CH(BP)\to \CH(BT)^{W_{P}}=S(T^{*})^{W_{P}}
\end{equation}
is an isomorphism. Hence, the morphism $\CH(BP)\to \CH(G/P)$ given by the pullback of the structure morphism $G\to \Spec(k)$ induces the following morphism:
\begin{equation}\label{eq:variphi}
	\varphi: S(T^{*})^{W_{P}}\to \CH(G/P).
\end{equation}
and the torsion index $\tau(G)$ of $G$ is given as follows:
\begin{equation*}
	\tau(G)=[\CH_{0}(G/P):\Im \varphi \cap \CH_{0}(G/P)].
\end{equation*}

In \cite{Totaro_Spin}, Totaro computed the torsion indices of the even split spin groups $\Spin(2n)$. To formulate this result, first, for any nonnegative integer $s$, define
\begin{equation}\label{eq:defnzero}
	n_{0}:=\text{smallest integer } n \text{ such that } {n\choose 2}+1\geq 2^{2s}.
\end{equation}
Second, recall that it is known that $\tau_{2}(\Spin(2n))=0$ for $1\leq n\leq 3$ and $\tau_{2}(\Spin(8))=1$.

From this, the torsion indices of the even split spin groups can be calculated inductively using the following definition and theorem. For any integer $s \geq 2$, define
\begin{equation}\label{eq:defmzero}
m_{0}:=\text{smallest ingeter } m\geq 0 \text{ such that } 2m-\tau_{2}(\Spin(2m+2))>s-3.
\end{equation}
(Note that the above values of $\tau_{2}(\Spin(2n))$ for $n \leq 4$ make it possible to compute the values of $m_0$ for $s \leq 7$: e.g., for $s=7$, we get $m_0=3$.)

\begin{theorem}\cite[Theorem 0.1]{Totaro_Spin}\label{thm:totaromain}
	Let $n\in (2^{s},2^{s+1}]$ for some integer $s\geq 2$. Let $n_{0}$ and $m_{0}$ denote the integers in $(\ref{eq:defnzero})$ and $(\ref{eq:defmzero})$. Then,
	$$
	\tau_{2}(\Spin (2n))=
	\begin{cases}
		n-2s+1 & \text{ if } n\in (2^{s},\,2^{s}+m_{0}], \\
		n-2s & \text{ if } n\in (2^{s}+m_{0}, n_{0}),\\
		n-2s-1 & \text{ if } n\in [n_{0}, 2^{s+1}]  .
	\end{cases}
	$$
\end{theorem}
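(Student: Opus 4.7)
The plan is to apply the formula from the excerpt, namely
\[
\tau_2(\Spin(2n)) = v_2\bigl([\CH_0(G/P) : \Im\varphi \cap \CH_0(G/P)]\bigr),
\]
with $G = \Spin(2n)$ and $P$ a special parabolic subgroup. Since $\Spin(2n)$ is split semisimple, the Borel $B$ is itself special, so one can work with the full flag variety $G/B$; alternatively, a more computationally friendly choice is a maximal special parabolic $P$ whose quotient is the orthogonal Grassmannian $OG(n,2n)$ or the smooth projective quadric $Q^{2n-2}$, both of which have well-understood Chow rings and Schubert-cell decompositions. The task then becomes determining the exact $2$-adic valuation of the cokernel of $\varphi : S(T^{*})^{W_P} \to \CH(G/P)$ in top degree.

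For the upper bound, I would construct explicit elements of $\Im\varphi$ of the prescribed $2$-adic divisibility. These are built from Chern classes of the natural representations of $\Spin(2n)$: the vector representation contributes even-degree classes pulled back from $B\SO(2n)$, while the half-spin representations produce the characteristic classes distinguishing $B\Spin(2n)$ from $B\SO(2n)$. Combining these multiplicatively in Schubert-calculus fashion and tracking $2$-adic denominators, one realizes the three distinct bounds $n-2s+1$, $n-2s$, and $n-2s-1$. The definition of $n_0$ via $\binom{n}{2}+1 \geq 2^{2s}$ is precisely the count of degree-$2$ invariants that becomes large enough to force the gain of one more power of $2$; the definition of $m_0$ in $(\ref{eq:defmzero})$ signals that the transition between the first two cases is obtained by pulling back classes along a Levi embedding $\Spin(2(n-m_0-1)) \times \Spin(2m_0+2) \hookrightarrow \Spin(2n)$ and invoking the theorem inductively for the smaller factor.

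The lower bound is the heart of the matter: one must show that no element of $\Im\varphi$ in top degree is divisible by a higher power of $2$ than claimed. Reducing modulo $2$ and working in the mod-$2$ Chow ring $\Ch(G/P)$, I would exploit the action of the Steenrod squares and of the Milnor $Q$-operations. Classes in the image of $\varphi$ from $B\Spin(2n)$ are subject to Steenrod-algebra compatibilities that must be preserved under $\varphi$; exhibiting a specific Schubert class in $\Ch_0(G/P)$ whose putative lift violates such a compatibility gives the desired obstruction. The induction on $s$ enters here: the lower bound in each range is deduced from upper bounds for smaller spin groups, and the inductive definition of $m_0$ makes the bootstrapping coherent.

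The main obstacle will be the delicate three-case analysis controlling exactly when additional Chern-class combinations become $2$-divisible; this requires careful bookkeeping of Steenrod-stable lifts of Schubert cycles and a precise identification of the extremal cycles in each subrange $n \in (2^s, 2^s+m_0]$, $(2^s+m_0, n_0)$, and $[n_0, 2^{s+1}]$. A secondary difficulty is verifying that the recursive definition of $m_0$ is well-posed given the theorem being proved, which means the base cases $n \leq 4$ cited in the excerpt must be enough to kick off the recursion for every $s \geq 2$.
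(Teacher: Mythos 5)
This statement is not proved in the paper at all: it is Totaro's theorem, imported verbatim from \cite[Theorem 0.1]{Totaro_Spin} as a black box. The paper makes no attempt to re-derive it, so there is no ``paper's own proof'' to compare against, and the only legitimate proof here is ``see Totaro.'' You should have recognized it as a citation rather than a result to be reproved.

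That said, since you did sketch a proof, a few remarks on how it compares to Totaro's actual argument. Your overall framing (compute $\tau_2$ as a $2$-adic index in $\CH_0(G/P)$ via $\varphi$, separate upper and lower bounds, induct on $s$) is correct in broad outline. But the mechanism you propose for the lower bound is wrong: Totaro does not use Steenrod squares or Milnor $Q$-operations. His argument is entirely combinatorial, resting on what the present paper calls Totaro-decomposable subsets (Definition \ref{def:totarodecomp}) and the sharp $2$-divisibility result recalled here as Lemma \ref{lem:totarodecomp}: the valuation $\hat{v}_2\bigl(e_1^{\deg J}c(I)t^{n-1}\bigr)$ is exactly $n-a-1$ when $J$ is Totaro-decomposable of degree $2^a-1$, and at least $n-a$ otherwise. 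Both the upper and the lower bound fall out of that single dichotomy once one knows, for each residue range of $n$, the largest $a$ for which a Totaro-decomposable $J \subset [1,n-1]$ of degree $2^a-1$ exists; those existence/non-existence questions are settled by Kummer's theorem and explicit constructions (the sets $J_s$ and their extensions appearing in the proof of Corollary \ref{cor:intervalupperbd}, and Totaro's Lemmas 6.1, 6.2, 7.1, 8.2). Your proposed Levi embedding $\Spin(2(n-m_0-1))\times\Spin(2m_0+2)\hookrightarrow\Spin(2n)$ is likewise not how the induction is organized: $m_0$ enters only through its arithmetic definition $(\ref{eq:defmzero})$, which references $\tau_2(\Spin(2m+2))$ for small $m$, and the bootstrapping is purely numerical, not geometric.
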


Here and further, throughout the whole paper, we assume that $(a, b]$ denotes the set of integers $n$ such that $a < n \leq b$, $[a, b]$ denotes the set of integers $n$ such that $a \leq n \leq b$, and $(a, b)$ denotes the set of integers $n$ such that $a < n < b$.

Observe that this theorem, for values of $s$ in $[2, s_0]$ (where $s_0$ is a fixed integer), allows one to compute $m_0$ for values of $s$ up to $2^{s_0 + 1} + 2s_0 + 1$, which is greater than $s_0$, so the induction can indeed proceed. In fact, $m_0$ is approximately given by $s - 3 - 2\log_2 s$, and one can verify that $2^s + m_0 \in [2^s, 2^s + 2^{s-3}]$. For $n_0$, we have $n_0 \in [2^s + 2^{s-2}, 2^s + 2^{s-1}]$, and approximately, $n_0$ is close to $\sqrt{2} \cdot 2^s$.

For $2\leq s\leq 10$, we provide the following table for the values of $n_{0}$ and $m_{0}$:

\bigskip

\begin{center}
	\begin{tabular}{|c|c|c|c|c|c|c|c|c|c|c|}
		\hline
		$s$ & \rule{0pt}{2.5ex} $2$ & $3$ & $4$ & $5$ & $6$ & $7$ & $8$ & $9$ & $10$ & $\cdots$  \\ \hline
		$n_{0}$ & \rule{0pt}{2.5ex} $6$ & $12$ & $24$ & $46$ & $91$ & $182$ & $363$ & $725$ & $1449$  & $\cdots$ \\ \hline
		$m_{0}$ & \rule{0pt}{2.5ex} $0$ & $1$ & $1$ & $2$ & $2$ & $3$ & $4$ & $4$ & $5$ & $\cdots$ \\ \hline
	\end{tabular}
\end{center}

\bigskip

Now we consider the half-spin groups $\HSpin(2n)$ (also called semi-spin groups) with even integer $n\geq 2$. The torsion index of $\HSpin(2n)$ for small $n$ is known or will be estimated below as follows:

For $n=2$, we have $\Spin(4)\simeq \Spin(3)\times \Spin(3)$, thus $\HSpin(4)\simeq \Spin(3)\times \SO(3)$. Since $\tau(\SO(3))=2$ and $\tau(\Spin(3))=1$, we get 
\begin{equation*}
	\tau(\HSpin(4))=2.
\end{equation*}

For $n=4$, we will check that $\tau(\HSpin(8))\leq 2^{3}$ (See Corollary \ref{cor:2powerthreeupp} or Lemma \ref{lem:s22mchoose2}). In fact, by triality, $\HSpin(8)\simeq \SO(8)$, so
\begin{equation*}
	\tau(\HSpin(8))=2^{3}.
\end{equation*}

For even $n\geq 6$, we will get an elementary upper bound for $\tau(\HSpin(2n))$ in Lemma \ref{lem:s22mchoose2}:
	\begin{equation*}
	\tau_{2}(\HSpin(2n))\leq \begin{cases} n-S_{2}\big({n\choose 2}\big) & \text{ if } v_{2}(n)=1,\\
		n-S_{2}\big({n\choose 2}\big)+1 & \text{ if } v_{2}(n)\geq 2,\\
	\end{cases}
\end{equation*}
where $S_{2}({n\choose 2})$ denotes the sum of the base-$2$ digits of ${n\choose 2}$. In general, this elementary bound is less sharp than the bound we will provide in Theorem \ref{thm:main}. However, for $n= 6$ and $n = 8$, both bounds coincide.

Moreover, the upper bounds with $n=6$ and $n=8$ recover \cite[Theorem 5.1]{Totaro_E8} and the proof of \cite[Lemma 3.1]{Totaro_Splitting}, respectively, That is, $\tau(\HSpin(12))\leq 2^{2}$ and $\tau(\HSpin(16))\leq 2^{6}$. In fact, as noted by Totaro in \cite[p. 225, p. 238]{Totaro_E8}, 
\begin{equation*}
	\tau(\HSpin(12))=2^{2} \text{ and } \tau(\HSpin(16))=2^{6}.
\end{equation*}
Though no proof was provided there, we establish these equalities in Example \ref{ex:hspintwelve} and Corollary \ref{cor:lowerbdneight}, respectively.

For half-spin groups with arbitrary even $n$, we establish the following upper bounds for the torsion index in this paper. Namely, unless $n$ is a power of $2$ or of the form $3 \cdot 2^k$ for some $k$, the torsion index is at most twice that of the corresponding spin group. For these exceptional values of $n$, the torsion index of the half-spin group is bounded above by at most $2^3$ times that of the corresponding spin group.

Moreover, the obvious but notable lower bound is always given by the torsion index of the spin group (see Lemma \ref{lem:basicbound}). In many cases, the torsion index of the half-spin groups coincides with that of the spin groups.

\begin{theorem}\label{thm:main}
	Let $n\geq 4$ be an even integer and let $s$ be the integer such that $n \in (2^s, 2^{s+1}]$. Then, 
	\begin{equation*}
		\tau(\HSpin (2n))\leq \begin{cases} 2\cdot \tau(\Spin (2n)) & \text{ if } n\in (2^{s}, 2^{s+1})\setminus \{3\cdot 2^{s-1}\} \text{ or } n=6,\\
		2^{2}\cdot \tau(\Spin (2n)) & \text{ if } n=4 \text{ or } n=12,\\
		2^{3}\cdot\tau(\Spin (2n)) & \text{ if } n=2^{s+1}, s \ge 2 \text{ or } n=3\cdot 2^{s-1}, s \ge 4.
	\end{cases}
	\end{equation*}
	Moreover, we have
	\begin{equation*}
		\tau(\HSpin (2n))=\tau(\Spin (2n))
	\end{equation*}
	if $n\in (2^{s}, 2^{s}+m_{0}]\cup (2^{s}+2^{s-3}, 2^{s}+2^{s-2})\cup (2^{s}+2^{s-2}, n_{0})\cup (2^{s}+2^{s-1}, 2^{s+1})$, or $v_{2}(n)=1$, $s\geq 3$, and $n\in (2^s + m_0 + 1, 2^{s}+2^{s-3})\cup (n_0, 2^s + 2^{s-1})$, where $n_{0}$ and $m_{0}$ are as defined in $(\ref{eq:defnzero})$ and $(\ref{eq:defmzero})$.
\end{theorem}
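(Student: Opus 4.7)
The starting point is the Chow-theoretic description of the torsion index recalled in the introduction. Since $\HSpin(2n)=\Spin(2n)/\mu$ for a central subgroup $\mu\subset Z(\Spin(2n))$ contained in the Borel $B$, the flag varieties coincide, $\HSpin(2n)/B\simeq \Spin(2n)/B$, so the ambient Chow ring $\CH(G/B)$, and in particular the top group $\CH_{0}(G/B)$, are the same for $G=\Spin(2n)$ and $G=\HSpin(2n)$. What changes is the image of the characteristic map: writing $\Im\varphi_{\Spin}$ and $\Im\varphi_{\HSpin}$ for the images of \eqref{eq:variphi} attached to the two groups, the inclusion $T^{*}_{\HSpin}\subset T^{*}_{\Spin}$ of index $2$ yields $\Im\varphi_{\HSpin}\subset\Im\varphi_{\Spin}$, and hence
\[
\tau(\HSpin(2n))\;=\;\tau(\Spin(2n))\cdot [\,\Im\varphi_{\Spin}\cap\CH_{0}(G/B)\,:\,\Im\varphi_{\HSpin}\cap\CH_{0}(G/B)\,].
\]
The whole plan is then dedicated to bounding this last index by $2$, $2^{2}$, or $2^{3}$ depending on the arithmetic of $n$.

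Concretely, I would fix the half-spin quotient so that $\omega_{n}\in T^{*}_{\HSpin}$ while $\omega_{n-1}\notin T^{*}_{\HSpin}$; then $T^{*}_{\HSpin}$ is generated by $\omega_{1},\ldots,\omega_{n-2},\omega_{n}$ together with $2\omega_{n-1}$, so that $S(T^{*}_{\HSpin})$ is exactly the subring of $S(T^{*}_{\Spin})$ consisting of polynomials whose total degree in $\omega_{n-1}$ is even. The task is therefore to produce, for each $W_{P}$-invariant polynomial $f\in S(T^{*}_{\Spin})^{W_{P}}$ whose image represents a generator of $\Im\varphi_{\Spin}\cap\CH_{0}$, a $W_{P}$-invariant polynomial $\widetilde{f}\in S(T^{*}_{\HSpin})^{W_{P}}$ with $\varphi(\widetilde{f})=2^{i}\varphi(f)$ for $i\in\{1,2,3\}$. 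The natural building blocks for such lifts are $2\omega_{n-1}$, powers $\omega_{n}^{k}$, and the quadratic combinations $\omega_{n-1}\omega_{n}$ and $\omega_{n-1}^{2}+\omega_{n}^{2}$, all of which lie in $S(T^{*}_{\HSpin})$ and are $W_{P}$-invariant for a suitable parabolic $P$; this reduces the problem to a combinatorial check on monomials.

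This combinatorial check will be organised along Totaro's generating family from the proof of Theorem \ref{thm:totaromain} and the elementary bound of Lemma \ref{lem:s22mchoose2}. For $n$ in the generic range $(2^{s},2^{s+1})\setminus\{3\cdot 2^{s-1}\}$ (together with $n=6$), the base-$2$ expansion of $n$ is rich enough that a single substitution of $2\omega_{n-1}$ for $\omega_{n-1}$ in Totaro's key generator already produces a valid lift, yielding the bound $2\cdot\tau(\Spin(2n))$. For the small isolated values $n=4$ and $n=12$ an extra factor of $2$ is needed to enforce $W_{P}$-invariance, producing the bound $2^{2}\cdot\tau(\Spin(2n))$. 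For the ``thin'' values $n=2^{s+1}$ and $n=3\cdot 2^{s-1}$ (with $s\geq 2$ and $s\geq 4$, respectively) the binary expansion of $n$ is too sparse for such substitutions to succeed, and two additional factors of $2$ are needed to absorb the non-liftable monomials, whence the bound $2^{3}\cdot\tau(\Spin(2n))$. The ``moreover'' equalities are then argued as follows: in each of the listed intervals, Totaro's generators of $\Im\varphi_{\Spin}\cap\CH_{0}$ already lie in $S(T^{*}_{\HSpin})^{W_{P}}$, so the bounding index is $1$; combined with the trivial inequality $\tau(\HSpin)\geq\tau(\Spin)$ from Lemma \ref{lem:basicbound}, equality follows.

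I expect the main obstacle to be the exceptional thin cases $n=2^{s+1}$ and $n=3\cdot 2^{s-1}$, where the tight interplay between $W_{P}$-invariance and membership in $S(T^{*}_{\HSpin})$ forces one to work with cubic polynomial identities in $\omega_{n-1}$ and $\omega_{n}$ rather than the linear or quadratic substitutions used in the generic case. Proving that these identities exist, that they realise exactly the factor $2^{3}$ and no more, and that they are compatible with the characteristic map in the top Chow group, will require a careful analysis of the relevant top-degree Schubert classes, analogous to but more delicate than Totaro's inductive arguments in \cite{Totaro_Spin}.
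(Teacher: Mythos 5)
Your reduction to the index
\[
[\,\Im\varphi_{\Spin}\cap\CH_{0}(G/B)\,:\,\Im\varphi_{\HSpin}\cap\CH_{0}(G/B)\,]
\]
is sound and does match the paper's opening move (Lemma~\ref{lem:basicbound}, the subring diagram $R\subset\tilde R\subset\hat R$). But the mechanism you propose for bounding this index has a genuine gap, and also a factual error in the setup.

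The factual error: you describe $S(T^{*}_{\HSpin})$ as the subring of $S(T^{*}_{\Spin})$ consisting of polynomials of even total degree in $\omega_{n-1}$. It is not; it is the subring $\Z[\omega_{1},\dots,\omega_{n-2},2\omega_{n-1},\omega_{n}]$, which is much smaller. For instance $\omega_{n-1}^{2}$ has even degree in $\omega_{n-1}$ but is not in this subring (only $4\omega_{n-1}^{2}$ is). This matters, because a ``single substitution of $2\omega_{n-1}$ for $\omega_{n-1}$'' in a generator $f$ multiplies the image by $2^{\deg_{\omega_{n-1}}f}$, not by $2$. Totaro's top-degree generators in the spin case are monomials in $y,x_{1},s_{2},\dots,s_{n-1}$ of total degree $\dim X=\tfrac{(n-1)(n+2)}{2}$, and their degree in the one non-$\HSpin$-lattice generator is large, so the substitution you propose produces a huge unwanted power of $2$ and does not give the bounds $2$, $2^{2}$, $2^{3}$.

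The real difficulty, which your sketch does not address, is to find a \emph{different} top-degree element of $R=\Im\varphi_{\HSpin}$ whose $2$-divisibility in $\hat R$ can be pinned down exactly. The paper does this in Proposition~\ref{prop:totaro2divisible}: for a strongly Totaro-decomposable $J$ and $m\mid 2^{v_{2}(n)}$, the element $e_{1}^{\deg J}\cdot d(I\setminus\{n-m+1\})\cdot t^{2n-m}$ (and its variant with $d_{m}^{(2n-m)/m}$) has the same $\hat v_{2}$ as $e_{1}^{\deg J}c(I)t^{n-1}\in\tilde R$. Proving this uses a chain of concrete relations in the two-step flag variety model $X=\tilde G/\tilde P_{1,n}$: the vanishing $t^{2n-1}=0$ from the quadric projection, the identity $c_{n}(E)=0$ giving \eqref{eq:tnconeprime}, the Chern-class computation for $(E\oplus E^{*})\otimes L$ giving $\binom{2n}{i}t^{i}\in\bar R$ and $d_{i}^{2}=\binom{n}{i}t^{2i}$ modulo $2R'$, and Lemma~\ref{lem:expandtlarge2} to rewrite $t^{2n-2}$. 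It also requires replacing $d$'s by $c$'s in the top degree (Corollary~\ref{cor:totarodecomposableab}), which is where ``strong'' Totaro-decomposability, the new notion introduced in the paper, enters; your proposal has no analogue. Finally, the split into the three cases ($2$, $2^{2}$, $2^{3}$) is controlled not by a qualitative notion of ``sparse binary expansion'' but by whether $n-2^{v_{2}(n)}+1$ clears the largest element of the chosen $J$ (Corollary~\ref{cor:intervalupperbd}) and, when it does not, by the precise $2$-adic valuation of $\binom{2n}{m}$ via Kummer's theorem (Corollary~\ref{cor:2powerthreeupp}). None of this can be recovered from fundamental-weight substitutions alone, so the plan as written would not close.
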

\begin{proof}
The statement follows from Corollaries \ref{cor:intervalupperbd}, \ref{cor:2powerthreeupp}, and \ref{cor:3powerthreeupp}.
\end{proof}

The intervals where $\tau(\HSpin (2n))=\tau(\Spin (2n))$ are highlighted in red color, specifically for $n \in (2^s, 2^s + m_0] \cup (2^s + 2^{s-3}, 2^s + 2^{s-2}) \cup (2^s + 2^{s-2}, n_0) \cup (2^s + 2^{s-1}, 2^{s+1})$, as shown below:

\begin{center}
	\begin{tikzpicture}
		\coordinate (A) at (0,0);                 
		\coordinate (B) at (0.5,0);                 
		\coordinate (C) at (1.75,0);                 
		\coordinate (D) at (3.5,0);                 
		\coordinate (E) at (5.9,0);                
		\coordinate (F) at (7,0);                
		\coordinate (G) at (14,0);                
		
		\draw[thick, -] (A) -- (G);
		
		\foreach \pt in {A,B,C,D,E,F,G} {
			\draw[thick] (\pt) -- ++(0,0.1);
		}
		
		\node[below=5pt, font=\footnotesize] at (A) {$2^s$};
		\node[above=5pt, font=\footnotesize] at (B) {$2^s\!\!+\!m_0$};
		\node[below=5pt, xshift=5pt, font=\footnotesize] at (C) {$2^s+2^{s-3}$};
		\node[below=5pt, xshift=15pt,font=\footnotesize] at (D) {$2^s+2^{s-2}$};
		\node[above=8.1pt,font=\footnotesize] at (E) {$n_0$};
		\node[below=5pt, xshift=10pt, font=\footnotesize] at (F) {$2^s+2^{s-1}$};
		\node[below=5pt,font=\footnotesize] at (G) {$2^{s+1}$};
		
		\draw[red, ultra thick] (A) -- (B);
		\draw[red, fill=white] (A) circle [radius=0.07];  
		\draw[red, fill=red] (B) circle [radius=0.07];    
		
				\draw[red, ultra thick] (C) -- (D);
			\draw[red, fill=white] (C) circle [radius=0.07];  

		\draw[red, ultra thick] (D) -- (E);
		\draw[red, fill=white] (D) circle [radius=0.07];  
		\draw[red, fill=white] (E) circle [radius=0.07];  
		
		\draw[red, ultra thick] (F) -- (G);
		\draw[red, fill=white] (F) circle [radius=0.07];  
		\draw[red, fill=white] (G) circle [radius=0.07];    
		
	\end{tikzpicture}
\end{center}

Note that the size of interval $[n_{0}, 2^{s}+2^{s-1}]$, where $n_{0}\approx \sqrt{2}\cdot 2^{s}\approx 1.414\cdot 2^{s}$, constitutes approximately $8.6\%$ of the length of the total interval $[2^{s},2^{s+1}]$. On the other hand, the size of the interval $[2^{s}, 2^{s}+m_{0}]$, where $m_{0}\approx s-3-2\log_{2}s$ is relatively small when $s$ is large. When we ignore the interval $[2^{s}, 2^{s}+m_{0}]$, the total length of the intervals, where $\tau(\HSpin (2n))=\tau(\Spin (2n))$, is approximately $50\%+(37.5\%-8.6\%)=78.9\%$ of the length of the total interval $[2^{s},2^{s+1}]$. And even outside these intervals, for most of the values of $n$ divisible by 2, but not by 4, we still have $\tau(\HSpin (2n))=\tau(\Spin (2n))$.

\section{Preliminaries}
In this section, specifically subsections \ref{subsec:two-step}, \ref{subsec2:two-step}, and \ref{subsec3:two-step}, we recall some basic results on the Chow ring $\CH(X)$ of a two-step flag variety $X$ of type $D_n$.

For even $n$, we revisit the Weyl group invariants in the Chow ring of the classifying space of split tori for all simple groups of type $D_n$ and discuss their images under the morphism (\ref{eq:variphi}) in $\CH(X)$.

In the final part of subsection \ref{subsec3:two-step}, we provide an elementary upper bound for the torsion index of the half-spin groups (see Lemma \ref{lem:s22mchoose2}) as discussed in the Introduction. 

For more details and general theory, we refer the reader to \cite{KM}.

In subsection \ref{subsec4:two-step}, we recall some useful relations for the images of (\ref{eq:variphi}) in $\CH(X)$, which were developed in the proof of \cite[Theorem 7.1]{Totaro_E8}. We also provide Lemma \ref{lem:expandtlarge2}, which offers a new relation for these images. For further details, see \cite{Totaro_E8}.

\subsection{The Chow ring of a two-step flag variety $X$ of type $D_n$}\label{subsec:two-step}

Let $q$ be a hyperbolic quadratic form on a $2n$-dimensional vector space over a field $k$ with a basis given by pairwise orthogonal hyperbolic pairs $\{v_{i}, u_{i}\}$ ($1\leq i\leq n$). Let $\tilde{G}=\Spin(q)$, i.e., $\tilde{G}=\Spin(2n)$.

Let $V_{1}$ denote the $1$-dimensional subspace spanned by $\{v_{1}\}$ and let $V_{n}$ denote the the $n$-dimensional totally isotropic subspace spanned by $\{v_{1},\ldots, v_{n}\}$. We write $\tilde{P}:=\tilde{P}_{1,n}$ for the stabilizer of the flag $V_{1}\subset V_{n}$ in $\tilde{G}$. Then, the semisimple part of $\tilde{P}$ is isomorphic to the special linear group $\SL(n-1)$.

Let $Y$ be the connected component of the scheme of maximal $n$-dimensional totally isotropic subspaces such that the point representing $V_n$ belongs to $Y$. Then, $Y$ is isomorphic to the variety of $(n-1)$-dimensional totally isotropic subspaces of the split $(2n-1)$-dimensional quadratic form over $k$. Hence, the Chow ring $\CH(Y)$ is isomorphic to the polynomial ring $\mathbb{Z}[e_{1},\ldots, e_{n-1}]$ in 
$n-1$ variables modulo the relations
\begin{equation}\label{chowringY}
	e_{i}^{2}-2e_{i-1}e_{i+1}+2e_{i-2}e_{i+2}-\cdots +(-1)^{i}e_{2i}=0
\end{equation}
for all $i\geq 1$, where we set $e_{i}=0$ for $i\geq n$ (see \cite[\S 86]{EKM} for the exact definition of $e_i$s and for further details).

Let $X=\tilde{G}/\tilde{P}$. Then, the natural projection $\pi:X\to Y$ is the projective bundle $\mathbb{P}(E)\to Y$ associated with the tautological $n$-dimensional vector bundle $E$ over $Y$. Therefore, 
\begin{equation}\label{chowringXprime}
	\CH(X)=\frac{\CH(Y)[t]}{(t^{n}+c_{1}t^{n-1}+\cdots +c_{n})},
\end{equation}
where $t=c_{1}(L)$ with $L=\mathcal{O}_{\mathbb{P}(E)}(1)$ and $c_{i}=c_{i}(E)$ for $1\leq i\leq n$. We have $c_n=0$ (see \cite[Theorem 2.1]{Kar})  and $c_i = (-1)^i 2e_i$ for $1 \le i \le n-1$ (see \cite[Proposition 86.13]{EKM}). We also set $c_{i}=0$ for $i\geq n+1$. 

Since $c_{n}=0$, the relation in (\ref{chowringXprime}) becomes
\begin{equation}\label{relationtandc}
	t^{n}+c_{1}t^{n-1}+\cdots +c_{n-1}t=0.
\end{equation}

Since $c_{i}=(-1)^{i}2e_{i}$ for $1\leq i\leq n-1$, it follows from (\ref{chowringY}) that the Chern classes $c_{1},\ldots, c_{n-1}$ satisfy the following relations:
\begin{equation}\label{eq:relationcitwo}
	c_{i}^{2}-2c_{i-1}c_{i+1}+2c_{i-2}c_{i+2}-\cdots +(-1)^{i}2c_{2i}=0
\end{equation}
for $1\leq i\leq n-1$ and the Chow ring $\CH(X)$ is isomorphic to
\begin{equation}\label{chowX}
		\frac{\mathbb{Z}[t,e_{1},\ldots, e_{n-1}]}{\big(t^{n}-2e_{1}t^{n-1}+\cdots +(-1)^{n-1}2e_{n-1}t,\,\, e_{i}^{2}-2e_{i-1}e_{i+1}+\cdots +(-1)^{i}e_{2i}\big)}
\end{equation}
for $1\leq i\leq n-1$. Observe that $\dim X=\frac{(n-1)(n+2)}{2}$ and 
\begin{equation}\label{chowringX}
	\CH_{0}(X)=\mathbb{Z}\cdot x_{0},
	\,\, \text{where } 
	x_{0}
	=t^{n-1}\prod_{i=1}^{n-1}e_{i}.
\end{equation}
We will also frequently use the relation that any monomial in $e_i$s of degree bigger than $\dim Y = n(n-1)/2$ is zero.
Also, it follows from \cite[Theorem 86.12]{EKM} and (\ref{chowringXprime}) that $\CH(X)$ (and hence its subrings) viewed as an abelian group is finitely generated and free.

\subsection{The Weyl group invariants in $\CH(BT)$}\label{subsec2:two-step}

Let $\tilde{T}$ be a maximal torus of $\tilde{G}=\Spin(2n)$. Then, the character lattice $\tilde{T}^{*}$ of $\tilde{T}$ is given by
\begin{equation*}
	\tilde{T}^{*}=\mathbb{Z}\cdot x_{1}+\cdots +\mathbb{Z}\cdot x_{n}+\mathbb{Z}\cdot y,
\end{equation*}
where $2y=x_{1}+\cdots +x_{n}$. Let $x_{i}'=x_{i}-x_{1}$ for $i=2,\ldots, n$ and let $s_{i}$ denote the elementary symmetric polynomials on $x_{i}'$. Then, the Weyl group $S_{n-1}$ of $\tilde{P}_{1,n}$, denoted by $W$, permutes $x_{i}'$. Moreover, the action of $W$ on both $y$ and $x_{1}$ is trivial. Hence,
\begin{equation*}
	\CH(B\tilde{T})^{W}=S(\tilde{T}^{*})^{W}=\Z[y,x_{1},s_{2},\ldots, s_{n-1}],
\end{equation*}
see \cite[\S 8.4]{KM}.

From now on, we shall assume that $n$ is even. Then, the center $Z(\tilde{G})$ of $\tilde{G}$ is isomorphic to $\mu_{2}\times \mu_{2}$. Let $G=\tilde{G}/\mu$, where $\mu$ denotes the subgroup $1\times \mu_{2}$ or $\mu_{2}\times 1$ of $Z(\tilde{G})$, i.e., $G=\HSpin(2n)$ and let $T$ be a maximal torus of $G$. Then,
\begin{equation}\label{eq:ChowBTSTstar}
	\CH(BT)^{W}=S(T^{*})^{W}=\Z[y, 2x_{1}, s_{2},\ldots, s_{n-1}],
\end{equation}
see \cite[\S 8.4]{KM}. Similarly, let $G'=\tilde{G}/\mu'$, where $\mu'\simeq \mu_{2}$ denotes the diagonal subgroup of $Z(\tilde{G})$, i.e., $G'=\SO(2n)$, and let $\bar{G}=\tilde{G}/Z(\tilde{G})$, i.e., $\bar{G}=\PGO^{+}(2n)$. For the maximal tori $T'$ and $\bar{T}$ of $G'$ and $\bar{G}$, respectively, we have
\begin{equation*}
	\CH(BT')^{W}=\Z[x_{1}, s_{1}, s_{2},\ldots, s_{n-1}]\,\, \text{ and } \CH(B\bar{T})^{W}=\Z[2x_{1}, s_{1},\ldots, s_{n-1}],
\end{equation*}
see \cite[\S 8.4]{KM}. Note that $\bar{T}^{*}\subset T^{*}\subset \tilde{T}^{*}$ and $S(\bar{T}^{*})^{W}\subset S(T^{*})^{W}\subset S(\tilde{T}^{*})^{W}$.

An elementary relation between the torsion indices of $\bar{G}$, $G$, and $\tilde{G}$ is as follows:

\begin{lemma}\label{lem:basicbound}
	We have $\tau(\tilde{G})\,\,|\,\, \tau(G)\,\,|\,\, \tau(\bar{G})$. In particular, $$\tau(\Spin(2n))\leq \tau(\HSpin(2n)).$$
\end{lemma}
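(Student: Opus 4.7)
The plan is to reduce the claim to the following general fact: for any central isogeny $f \colon H_1 \twoheadrightarrow H_2$ of split semisimple groups over $k$, one has $\tau(H_1) \mid \tau(H_2)$. Applied to the two central isogenies $\tilde G \twoheadrightarrow G$ and $G \twoheadrightarrow \bar G$ in turn, this yields the divisibility chain $\tau(\tilde G) \mid \tau(G) \mid \tau(\bar G)$, and the particular consequence $\tau(\Spin(2n)) \le \tau(\HSpin(2n))$ is then immediate from divisibility of positive integers.

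To establish the reduction I would begin by fixing a Borel subgroup $B_1 \subset H_1$. Since $\mu := \ker f$ is central it is contained in the maximal torus, hence in $B_1$, so $B_2 := f(B_1)$ is a Borel subgroup of $H_2$ and $f$ descends to an isomorphism $H_1/B_1 \xrightarrow{\sim} H_2/B_2$ of projective varieties over $k$. Next, for any field extension $K/k$ and any $H_1$-torsor $E$ over $K$, I would form the associated $H_2$-torsor $E' := E \times^{H_1} H_2$ and use the functoriality of the contracted product together with the $k$-isomorphism above to obtain canonical identifications
\[
  E/B_1 \;=\; E \times^{H_1}(H_1/B_1) \;\cong\; E \times^{H_1}(H_2/B_2) \;=\; E' \times^{H_2}(H_2/B_2) \;=\; E'/B_2
\]
of twisted flag varieties over $K$. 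In particular $\ind(E/B_1) = \ind(E'/B_2)$.

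To finish, I would appeal to the formula $\tau(H) = \operatorname{lcm}\{\ind(E/B)\}$ over all $H$-torsors $E$ recalled in Section \ref{secone}: every integer $\ind(E/B_1)$ contributing to $\tau(H_1)$ appears as $\ind(E'/B_2)$ for some $H_2$-torsor and hence divides $\tau(H_2)$; taking the lcm gives $\tau(H_1) \mid \tau(H_2)$. There is essentially no substantive obstacle in this argument, as the statement is formal once the twisted flag varieties are identified through the central isogeny; the only point requiring a moment's care is the middle isomorphism in the display, which is a direct consequence of the functoriality of contracted products applied to the $k$-isomorphism $H_1/B_1 \cong H_2/B_2$.
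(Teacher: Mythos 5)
Your proposal is correct but takes a genuinely different route from the paper's. The paper works entirely on the Chow-ring side: it verifies that $\tilde{P}$, $P:=\tilde{P}/\mu$, and $\bar{P}:=\tilde{P}/Z(\tilde{G})$ are all special (since the semisimple part $\SL(n-1)$ meets $Z(\tilde G)$ trivially), identifies $\CH(B\tilde P)\simeq S(\tilde T^*)^W$ and likewise for $P,\bar P$, and then uses the formula $\tau(G)=[\CH_0(G/P):\Im\varphi\cap\CH_0(G/P)]$ together with the containments $S(\bar T^*)^W\subset S(T^*)^W\subset S(\tilde T^*)^W$ to get $\Im\bar\varphi\subset\Im\varphi\subset\Im\tilde\varphi$ inside $\CH(X)$, which immediately reverses to the divisibility chain of indices. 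You instead argue on the torsor side: push an $H_1$-torsor $E$ forward along the central isogeny $f\colon H_1\to H_2$ to $E'=E\times^{H_1}H_2$, observe that $\mu=\ker f$ lies in every Borel so $H_1/B_1\cong H_2/B_2$ as $H_1$-varieties, deduce $E/B_1\cong E'/B_2$ by functoriality of contracted products, and conclude from the lcm characterization of $\tau$. Both arguments are sound. Your version is more general---it shows $\tau(H_1)\mid\tau(H_2)$ for any central isogeny of split semisimple groups, with no need to exhibit a special parabolic avoiding the center---whereas the paper's version is tightly integrated with the Chow-ring machinery (the rings $\tilde R\supset R\supset\bar R$, $\tilde R\supset R'\supset \bar R$) that the rest of the paper is built on, so it sets up notation and inclusions that are reused immediately afterwards. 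Either proof is acceptable; the paper's choice is a matter of economy within its own framework rather than necessity.
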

\begin{proof}
		Let $\tilde{P}$ denote the parabolic subgroup $\tilde{P}_{1,n}$ of $\tilde{G}$, and let $W$ be the Weyl group of $\tilde{P}$. Since the semisimple part of $\tilde{P}$ is $\SL(n-1)$, the group $\tilde{P}$ is special. Define $P = \tilde{P}/\mu$ and $\bar{P} = \tilde{P}/Z(\tilde{G})$. Since the semisimple part $\SL(n-1)$ of $\tilde{P}$ intersects $Z(\tilde{G})$ trivially, the parabolic subgroups $P$ and $\bar{P}$ are also special. 
	
	Since $W = W_{P} = W_{\bar{P}}$, it follows from (\ref{eq:chbpchbtwp}) that
	\[
	\CH(B\tilde{P}) \simeq S(\tilde{T}^{*})^{W}, \quad \CH(BP) \simeq S(T^{*})^{W}, \quad \text{and} \quad \CH(B\bar{P}) \simeq S(\bar{T}^{*})^{W}.
	\]
	Moreover, we obtain the corresponding morphisms 
	\begin{equation}\label{eq:varphitildebar}
		\tilde{\varphi}:S(\tilde{T}^{*})^{W}\to \CH(\tilde{G}/\tilde{P}),\, \varphi:S(T^{*})^{W}\to \CH(G/P),\, \bar{\varphi}:S(\bar{T}^{*})^{W}\to \CH(\bar{G}/\bar{P})
	\end{equation}
	 as in (\ref{eq:variphi}). Since $\tilde{G}/\tilde{P} = G/P = \bar{G}/\bar{P}$ and $S(\bar{T}^{*})^{W}\subset S(T^{*})^{W}\subset S(\tilde{T}^{*})^{W}$, it follows that 
	 $\varphi = \tilde{\varphi} \vert_{S(T^{*})^{W}}$ and $\bar{\varphi} = \tilde{\varphi} \vert_{S(\bar{T}^{*})^{W}}$, 
	 proving the statement.\end{proof}

\subsection{Subrings of $\CH(X)$}\label{subsec3:two-step}

Let $\hat{R}=\CH(X)$. The morphisms $\varphi$ and $\tilde{\varphi}$ for $G$ and $\tilde{G}$ as in (\ref{eq:variphi}) and (\ref{eq:varphitildebar}) are given by
\begin{equation}\label{eq:imageofvarphi}
	\begin{split}
		\varphi: \CH(BT)^{W}=\Z[y, 2x_{1}, s_{2},\ldots, s_{n-1}]&\to  \hat{R}=\Z[t, e_{1},\ldots, e_{n-1}]/I \\
		\tilde{\varphi}: \CH(B\tilde{T})^{W}=\Z[y, x_{1}, s_{2},\ldots, s_{n-1}]&\to  \hat{R}=\Z[t, e_{1},\ldots, e_{n-1}]/I \\
		y&\mapsto  -e_{1}\\
		x_{1}&\mapsto  -t\\
		s_{i}&\mapsto  c_{i}\big(\frac{\pi^{*}E}{L^{*}}\tens L\big),\,
		i=2,\ldots, n-1,\end{split}
\end{equation}
where $I$ denotes the ideal in the denominator of (\ref{chowX}) and $L^{*}$ denotes the dual of $L$, i.e., $L^{*}=\mathcal{O}_{\mathbb{P}(E)}(-1)$ (to see this, similarly to \cite[\S 8.4]{KM}, we apply \cite[Lemma 7.1]{KM} to $E$, to $L^*$, and to $\frac{\pi^{*}E}{L^{*}}\tens L$, but compute the result in $\CH(X)$ rather than in $\CH(X)/2\CH(X)$). Note that
\begin{equation}\label{chernclassel}
	\begin{split}
		c_{i}\big(\frac{\pi^{*}E}{L^{*}}\tens L\big)=c_{i}(E\tens L)&=\sum_{k=0}^{i}{n+k-i\choose k}c_{1}(L)^{k}c_{i-k}(E)\\
		&={n\choose i}t^{i}+2\sum_{k=0}^{i-1}{n+k-i\choose k}(-1)^{i-k}t^{k}e_{i-k}
	\end{split}
\end{equation}
for any $1\leq i\leq n$. For $\varphi'$ and $\bar{\varphi}$, we also need $\varphi(s_1)$, but by \cite[Lemma 7.1]{KM}, we also get
$\varphi(s_1)=c_{1}(\frac{\pi^{*}E}{L^{*}}\tens L)=c_{1}(E \otimes L)$.
We simply write $d_{i}$ for the Chern class $c_{i}(E\tens L)$ in (\ref{chernclassel}), i.e., 
\begin{equation}\label{chernclassd}
	d_{i}={n\choose i}t^{i}+{n-1\choose i-1}t^{i-1}c_{1}+\cdots+{n-i+1\choose 1}tc_{i-1}+{n-i\choose 0}c_{i}
\end{equation}
for $1\leq i\leq n-1$.

We write $\langle t, e_{i}\rangle$ for the Chow ring $\hat{R}=\CH(X)$ in (\ref{chowX}) (i.e., $t$ and $e_{i}$ generate $\hat{R}$). By (\ref{eq:ChowBTSTstar}) and (\ref{eq:imageofvarphi}), we have a subring
\begin{equation*}
	\Im(\varphi)=\langle 2t, d_{i}, e_{1}\rangle
\end{equation*}
of $\hat{R}$. In addition, we have the following subrings of $\hat{R}$:
\begin{equation*}
	\Im(\tilde{\varphi})=\langle t, 2e_{i},e_{1}\rangle,\,\,\,  \Im(\bar{\varphi})=\langle 2t, d_{i}\rangle,\,\,\, \text{ and }\,\,\, \Im(\varphi')=\langle t, 2e_{i}\rangle,
\end{equation*}
where $\tilde{\varphi}$ and $\bar{\varphi}$ denote the morphisms in (\ref{eq:varphitildebar}), and $\varphi':\CH(BP')\to \CH(G'/P')=\CH(G/P)$ with $P'=\tilde{P}/\mu'$.

To simplify the notation, we set
\begin{equation*}
	R=\Im(\varphi),\,\, \tilde{R}=\Im(\tilde{\varphi}),\,\, \bar{R}=\Im({\bar{\varphi}}),\,\,  \text{ and }\,\, R'=\Im(\varphi').
\end{equation*}
Then, the following diagram exhibits the inclusions of the above subrings:

\begin{center}
	\begin{tikzcd}[column sep=0.6cm, row sep=1.3cm]
		{} & \hat{R}=\langle t, e_{i}\rangle\arrow[dash]{d} &\\
		{} & \tilde{R}=\langle t, 2e_{i}, e_{1}\rangle=\langle t, c_{i}, e_{1}\rangle\arrow[dash]{dr}\arrow[dash, ,labels=above left]{dl}
		& \\
		R=\langle 2t, d_{i}, e_{1}\rangle\arrow[dash,labels=below left]{dr} & & R'=\langle t, 2e_{i}\rangle=\langle t, c_{i}\rangle=\langle t, d_{i}\rangle\arrow[dash]{dl} \\
		& \bar{R}=\langle 2t, d_{i}\rangle
	\end{tikzcd}
\end{center}

Let $[A:B]_{0}$ denote the index of the degree $0$ part of $B$ in the degree $0$ part of $A$. Then, we have
\begin{equation*}
	\tau(\HSpin(2n))=[\hat{R}: R]_{0},\,\,\,\,	\tau(\Spin(2n))=[\hat{R}: \tilde{R}]_{0},\,\,\,\, \tau(\SO(2n))=[\hat{R}: R']_{0},
\end{equation*}
and $\tau(\PGO^{+}(2n))=[\hat{R}: \bar{R}]_{0}$.
Note that by \cite{Totaro_E8} and \cite{Totaro_Spin}, we have
\begin{equation*}
	\tau(\SO(2n))=2^{n-1},\,\,\,\,	\frac{\tau(\PGO^{+}(2n))}{\tau(\SO(2n))}=\begin{cases} 1  & \text{if $n$ is not a power of $2$},\\
		2 & \text{otherwise,}
	\end{cases}
\end{equation*}
and 
\begin{equation*}
	\frac{\tau(\SO(2n))}{\tau(\Spin(2n))}=2^{\big\lfloor \log_2\big({n\choose 2}+1 \big)\big\rfloor} \text{ or } 2^{\big\lfloor\log_2\big({n\choose 2}+1 \big)\big\rfloor-1}.
\end{equation*}
From this point onward, we will no longer use the notation $y$ as introduced earlier. In the following, we may reuse $y$ for different purposes.

We present an elementary upper bound for the torsion index of $\HSpin(2n)$:
\begin{lemma}\label{lem:s22mchoose2}
	For any even integer $n\geq 4$, we have
	\begin{equation*}
	\tau_{2}(\HSpin(2n))\leq \begin{cases} n-S_{2}\big({n\choose 2}\big) & \text{ if } v_{2}(n)=1,\\
	n-S_{2}\big({n\choose 2}\big)+1 & \text{ if } v_{2}(n)\geq 2,\\
	\end{cases}
	\end{equation*}
	where $S_{2}({n\choose 2})$ denotes the sum of the base-$2$ digits of ${n\choose 2}$.
\end{lemma}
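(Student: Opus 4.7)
The plan is to exhibit an explicit element of $R$ in codimension $\dim X$ equal to $2^k x_0$, where $k$ matches the claimed bound, noting that $\tau_2(\HSpin(2n))=[\hat R:R]_0$ together with $\CH_0(X)=\mathbb{Z}\cdot x_0$ reduces the problem to computing the $2$-adic valuation of that integer multiple of $x_0$. The key initial observation is that for even $n$, $d_1=nt-2e_1\in R$ (since $nt=(n/2)\cdot 2t\in R$), so every $d_i$ with $1\le i\le n-1$ is available in $R$ alongside $2t$ and $e_1$.

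First I would take the natural candidate
\[\alpha:=(2t)^{n-1}\cdot e_1\cdot\prod_{i=2}^{n-1}d_i\in R,\]
which sits in degree $(n-1)+1+\sum_{i=2}^{n-1}i=\binom{n}{2}+n-1=\dim X$ and is therefore an integer multiple of $x_0$.

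Next I would compute that multiple by expanding $d_i=\binom{n}{i}t^i+2\sum_{j=1}^i(-1)^j\binom{n-j}{i-j}t^{i-j}e_j$ and then appealing to the projection formula for $\pi\colon X=\P(E)\to Y$, which gives $\deg_X(t^{n-1+a}\pi^*\gamma)=\deg_Y(s_a(E)\cdot\gamma)$ for $\gamma\in\CH(Y)$, with $s_a(E)$ the $a$-th Segre class of $E$. The critical $2$-adic input is that $c(E)\equiv 1\pmod 2$ (because $c_i(E)=(-1)^i\,2e_i$), which forces $s_a(E)\in 2\,\CH(Y)$ for every $a\ge 1$; this is the main source of $2$-divisibility for the off-diagonal terms. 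Combining this with Kummer's theorem for the $2$-adic valuations of the binomial coefficients $\binom{n-j}{i-j}$ that appear in the expansion, together with Legendre's formula $v_2(m!)=m-S_2(m)$ applied to $m=\binom{n}{2}$, should bound the $2$-adic valuation of the coefficient of $x_0$ by $n-S_2\binom{n}{2}$. The extra $+1$ when $v_2(n)\ge 2$ arises because then $(n/2)t$ requires an additional factor of $2$ to lie in $R$, eliminating one of the potential savings.

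The main obstacle will be the precise $2$-adic bookkeeping. The ``diagonal'' choice $j_i=i$ for every $i$ alone contributes $\pm 2^{2n-3}\cdot x_0$, which far exceeds the target, so the proof rests on showing that the off-diagonal terms combine, via the Segre-class $2$-divisibility and identities among the binomial coefficients, to cancel enough of this contribution to yield the stated valuation. I expect that subsums indexed by partial choices of the $j_i$'s can be grouped to expose these cancellations directly, after which Kummer's theorem applied to the remaining binomial coefficients will deliver the $S_2\binom{n}{2}$ saving.
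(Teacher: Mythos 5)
Your choice of element cannot give the claimed bound, and no amount of $2$-adic bookkeeping or cancellation will save it. The element
\[
\alpha=(2t)^{n-1}\cdot e_1\cdot\prod_{i=2}^{n-1}d_i
\]
is \emph{manifestly} divisible by $2^{n-1}$ in $\hat R$, because it has the honest factor $(2t)^{n-1}=2^{n-1}t^{n-1}$. Writing $\alpha=2^{n-1}\cdot t^{n-1}e_1\prod_{i=2}^{n-1}d_i$ and noting that $t^{n-1}e_1\prod_{i=2}^{n-1}d_i$ is an integer multiple of $x_0$, you get $\hat v_2(\alpha)\geq n-1$ unconditionally. But for every even $n\geq 4$ we have $\binom n2=\frac{n}{2}(n-1)$ with $n-1>1$ odd, so $\binom n2$ is never a power of $2$ and hence $S_2\big(\binom n2\big)\geq 2$. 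Therefore the target in the case $v_2(n)=1$ is $n-S_2\big(\binom n2\big)\leq n-2<n-1\leq \hat v_2(\alpha)$: the element is strictly too divisible. Concretely, for $n=6$ the target is $6-S_2(15)=2$, whereas $\hat v_2(\alpha)\geq 5$. The same problem persists in the $v_2(n)\geq 2$ case whenever $S_2\big(\binom n2\big)\geq 3$ (e.g. $n=8$, target $6$, but $\hat v_2(\alpha)\geq 7$). The ``diagonal exceeds the target, so cancellations must reduce it'' hope is impossible here: the $2^{n-1}$ is factored out in front, so it cannot cancel.

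The paper instead uses the high power of $e_1$ as the source of the saving. By \cite[Lemma 3.3]{Totaro_Spin}, $e_1^{n(n-1)/2}=2^{\,n-1-S_2(\binom n2)}\cdot a\cdot\prod_{i=1}^{n-1}e_i$ with $a$ odd; this is an \emph{exact} $2$-adic evaluation, and it is where $S_2\big(\binom n2\big)$ enters. The paper then takes $x=e_1^{n(n-1)/2}d_{n-1}$ (or, when $v_2(n)\geq 2$, $x=e_1^{n(n-1)/2-1}d_{n-1}\cdot 2t$), and uses the degree constraint in $\CH(Y)$ — any monomial in the $e_i$ of degree beyond $\binom n2$ vanishes — to kill all but one term of $d_{n-1}$. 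You use exactly one factor of $e_1$ and many factors of $2t$ and $d_i$, which gets the degrees right but throws away all the $2$-adic leverage. The Segre-class/projection-formula machinery you invoke is also unnecessary: everything can be read off inside the explicit presentation $\CH(X)=\CH(Y)[t]/(t^n+c_1t^{n-1}+\cdots)$.
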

\begin{proof}
	Consider the following element of $R$:
	\begin{equation*}
	x=\begin{cases}
		e_{1}^{\frac{n(n-1)}{2}}\cdot d_{n-1}& 	\text{ if } v_{2}(n)=1,\\
		e_{1}^{\frac{n(n-1)}{2}-1}\cdot d_{n-1}\cdot 2t & 	\text{ if } v_{2}(n)\geq 2.
		\end{cases}
	\end{equation*}
	By \cite[Lemma 3.3]{Totaro_Spin}, we have 
	\begin{equation*}
	e_{1}^{\frac{n(n-1)}{2}}=2^{n-1-S_{2}({n\choose 2})}\cdot a\cdot \prod_{i=1}^{n-1}e_{i}=:y
	\end{equation*}
	for some odd integer $a$. If $v_{2}(n)=1$, then  by (\ref{chernclassd}),
	\begin{equation*}
x=y\cdot \big({n\choose n-1}t^{n-1}-\cdots -2e_{n-1} \big)=b\cdot 2y\cdot  t^{n-1}=2^{n-S_{2}({n\choose 2})}\cdot ab\cdot x_{0}
	\end{equation*}
for some odd integer $b$ (here we use the relation that monomials in $e_i$s of degree higher than $n(n-1)/2$ are zero), and thus the inequality follows.

	 Now assume $v_{2}(n)\geq 2$.  By (\ref{relationtandc}) and (\ref{chernclassd}),
	\begin{equation*}
		\begin{split}
			d_{n-1}\cdot 2t 
			& = 2{n\choose n-1}t^{n}+2 \big({n-1\choose n-2}t^{n-1}c_{1}+\cdots+ c_{n-1}t \big) \\
			& = 2{n\choose n-1}(-c_{1}t^{n-1}-\cdots -c_{n-1}t)+2 \big({n-1\choose n-2}t^{n-1}c_{1}+\cdots+ c_{n-1}t \big)\\
			& = -2c_{1}t^{n-1}+2\sum_{i=2}^{n-1}a_{i}c_{i}t^{n-i}= 4 \big( e_1 t^{n-1} + \sum_{i=2}^{n-1} (-1)^i a_i e_i t^{n-i})
		\end{split}
	\end{equation*}
	for some $a_i \in \Z$. Hence, we obtain
	\begin{equation*}
			x=4 e_1^{\frac{n(n-1)}{2}-1} \big( e_1 t^{n-1} + \sum_{i=2}^{n-1} (-1)^i a_i e_i t^{n-i})=4 y\cdot t^{n-1}=2^{n-S_{2}({n\choose 2})+1}\cdot a\cdot x_{0},
	\end{equation*}
	and thus the inequality follows.
\end{proof}

\subsection{Some relations in the subrings of $\CH(X)$}\label{subsec4:two-step}
In this section we discuss five relations: (\ref{eq:t2nminusone}), (\ref{eq:tnconeprime}), Lemma \ref{lem:expandtlarge2}, (\ref{eq:twoniti}), and (\ref{eq:disquared}).

Let $E$ be the tautological $n$-dimensional vector bundle on $Y$, and let $L=\mathcal{O}_{\mathbb{P}(E)}(1)$ be the canonical line bundle on $X=\mathbb{P}(E)$ as defined in Section \ref{subsec:two-step}. Consider the projection $\pi: X=\mathbb{P}(E)\to Z$, where $Z$ is the projective quadric of $q$. Then, 
\begin{equation*}
	\dim Z=2n-2 \text{ and } L^{*}=\mathcal{O}_{\mathbb{P}(E)}(-1)=\pi^{*}(\mathcal{O}_{Z}(-1)).
\end{equation*}
As $c_{1}(\mathcal{O}_{Z}(-1))^{2n-1}=0$ and $t=c_{1}(L)$, we have the following relation
\begin{equation}\label{eq:t2nminusone}
	t^{2n-1}=0 \text{ in } R'\subset \tilde{R}\subset \hat{R}.
\end{equation}

Recall the following two formulas:
\begin{equation*}
c_{i}(\mathcal{E}\tens \mathcal{L})=\sum_{j=0}^{i}{n-j\choose i-j}c_{1}(\mathcal{L})^{i-j}c_{j}(\mathcal{E})
\end{equation*}
for an $n$-dimensional vector bundle $\mathcal{E}$ and a line bundle $\mathcal{L}$ on a variety $V$, and
\begin{equation*}
c_{i}(\mathcal{E}\oplus \mathcal{F})=\sum_{j=0}^{i}c_{j}(\mathcal{E})c_{i-j}(\mathcal{F})
\end{equation*}
for a vector bundle $\mathcal{F}$ on $V$.

It follows from the relation $c_{n}(E)=0$ that
\begin{equation}\label{eq:tncone}
	\begin{split}
0=c_{n}(E)&=c_{n}(E\tens L\tens L^{*})\\
&=\sum_{j=0}^{n}c_{1}(L^{*})^{n-j}c_{j}(E\tens L)\\
&=(-t)^{n}+(-t)^{n-1}d_{1}+\cdots + (-t)d_{n-1},
	\end{split}
\end{equation}
where $d_{i}=c_{i}(E\tens L)$ and $c_{1}(L)=t$. In particular, since $n$ is even, the following relation holds in $R'$:
\begin{equation}\label{eq:tnconeprime}
	t^{n}-d_{1}t^{n-1}+d_{2}t^{n-2}+\cdots -d_{n-1}t=0.
\end{equation}

\begin{lemma}\label{lem:expandtlarge2}
	For each integer $1 \le k \le n-1$, 
	$$
	t^{n-1+k} = t^{n-1} c_k + \sum_{i=1}^{n-2} t^i a_i
	$$
	in $R'/2R'$, where each $a_i$ is a homogeneous polynomial in $c_{1},\ldots, c_{n-1}$ of total degree $n-1+k-i$.
\end{lemma}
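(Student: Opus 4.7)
The plan is to prove the claim by induction on $k$, guided by an explicit formula for the coefficients $a_i$. A naive induction on the lemma's statement alone is too weak: the coefficient of $t^{n-1}$ in $t^{n+k}$ depends not merely on the degree of $a_{n-2}$ at step $k$ but on its exact value, so one must track all coefficients simultaneously. Two ingredients drive the argument, both obtained by reducing earlier relations modulo $2$ in $R'$: (i) from (\ref{relationtandc}), $t^n \equiv c_1 t^{n-1} + c_2 t^{n-2} + \cdots + c_{n-1} t \pmod{2R'}$; (ii) from (\ref{eq:relationcitwo}), $c_i^2 \equiv 0 \pmod{2R'}$ for each $1 \le i \le n-1$, since every other term in that relation carries a factor of $2$.

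I would prove the following stronger identity by induction on $k$, for $1 \le k \le n-1$:
$$t^{n-1+k} = \sum_{i=1}^{n-1} \tilde a_i^{(k)}\, t^i \;\;\text{in } R'/2R', \quad \text{where } \tilde a_i^{(k)} := \sum_{m=0}^{k-1} c_m\, c_{n-i+k-1-m},$$
with the conventions $c_0 = 1$ and $c_j = 0$ for $j \ge n$ or $j < 0$. Each $\tilde a_i^{(k)}$ is manifestly homogeneous in $c_1,\dots,c_{n-1}$ of total degree $n-1+k-i$. The crucial observation is the pairing identity $\tilde a_{n-1}^{(k)} = \sum_{m=0}^{k-1} c_m c_{k-m} \equiv c_k \pmod{2R'}$: the $m=0$ term contributes $c_k$, the remaining terms pair as $c_m c_{k-m}$ and $c_{k-m} c_m$ which cancel modulo $2$, and if $k$ is even the central term $c_{k/2}^2$ vanishes by (ii). Isolating the $i = n-1$ term in the stronger identity then recovers the form stated in the lemma, with $a_i := \tilde a_i^{(k)}$ for $1 \le i \le n-2$.

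The induction itself is essentially mechanical. The base case $k=1$ is just relation (i) with $\tilde a_i^{(1)} = c_{n-i}$. For the inductive step, multiply the stronger identity by $t$, isolate the resulting $t^n$ term (whose coefficient is $\tilde a_{n-1}^{(k)} \equiv c_k$ by the pairing), and apply (i) to rewrite it; comparing coefficients of $t^j$ on both sides gives $c_k c_{n-j} + \tilde a_{j-1}^{(k)} = \sum_{m=0}^{k} c_m c_{n-j+k-m} = \tilde a_j^{(k+1)}$, as required. The main obstacle is guessing the correct ansatz for $\tilde a_i^{(k)}$; once this is in hand, both the pairing collapse of $\tilde a_{n-1}^{(k)}$ to $c_k$ and the inductive telescoping reduce to bookkeeping that depends crucially on the vanishing $c_i^2 \equiv 0 \pmod 2$ provided by (\ref{eq:relationcitwo}).
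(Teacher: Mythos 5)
Your proof is correct, and it takes a genuinely different route from the paper. Both arguments strengthen the induction hypothesis (you are right that a naive induction on the lemma's statement alone does not close), but the shape of the strengthening differs. The paper works with the sums $S_{k,j} = \sum c_{i_1}\cdots c_{i_s}$ over all ordered tuples of positive integers with $i_1 \ge j$ and $i_1 + \cdots + i_s = k$, proves $t^{n-1+k} \equiv \sum_{i=1}^{n-1} S_{k+n-1-i,\, n-i}\, t^i \pmod{2R'}$ by iterating the recursion $S_{k+j,j} - S_{k+j,j+1} = c_j S_{k,1}$, and only at the very end collapses $S_{k,1}$ to $c_k$ modulo $2$ via the ``$s!$ kills products of $\ge 2$ factors of the same degree'' observation, which itself rests on $c_j^2 \equiv 0$. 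Your argument instead guesses an explicit two-factor convolution ansatz $\tilde a_i^{(k)} = \sum_{m=0}^{k-1} c_m c_{n-i+k-1-m}$ and pushes it through the induction directly, using $c_j^2 \equiv 0$ only to prove the pairing collapse $\tilde a_{n-1}^{(k)} \equiv c_k$. The paper's representation arises mechanically from unwinding (\ref{relationtandc}) without foresight, at the price of a separate cleanup step; yours requires discovering the closed form in advance, but once in hand both the collapse to $c_k$ and the telescoping $c_k c_{n-j} + \tilde a_{j-1}^{(k)} = \tilde a_j^{(k+1)}$ are short exact identities, and the $i=0$ edge case is handled automatically by the conventions $c_0 = 1$, $c_j = 0$ for $j \ge n$. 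Both are valid; yours is arguably the more economical write-up.
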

\begin{proof}
	Denote $c(i_1, \ldots, i_s) = c_{i_1} \ldots c_{i_s}$. Let $S_{k,j}$ be defined as
	\[
	S_{k,j} = \sum_{\substack{i_1 \ge j \\ i_1 + \cdots + i_s = k}} c(i_1,\ldots, {i_s}),
	\]
	where the summation is taken over all tuples $(i_1, \ldots, i_s)$ of natural numbers. Then, for any $1\leq j\leq n-2$, we have
	\begin{equation}\label{eq:Skjdifference}
		S_{k+j,j}-S_{k+j,j+1}=c_{j}S_{k,1}
	\end{equation}
	as both expressions represent the sum of $c(i_1,\ldots, {i_s})$ such that $i_{1}=j$ and $i_1 + \cdots + i_s = k+j$. Similarly, for $j=n-1$ we have
	\begin{equation}\label{eq:Skjdifferenceprime}
		S_{k+n-1,n-1}=c_{n-1}S_{k,1},
	\end{equation}
	since $c_{j}=0$ for any $j\geq n$.
	
	We prove by induction on $1\leq k\leq n-1$ that the following equation
	\begin{equation}\label{eq:formofpoweroft}
		t^{n-1+k} = t^{n-1} S_{k,1} + t^{n-2} S_{k+1, 2} + \cdots + t S_{k+n-2,n-1} 
	\end{equation}
	holds modulo $2R'$. For $k=1$, since $S_{j, j}=c_{j}$, the equation follows immediately from the relation in (\ref{relationtandc}). In general, let us multiply (\ref{eq:formofpoweroft}) by $t$ and expand it using the relation in (\ref{relationtandc}). Then, by (\ref{eq:Skjdifference}) and (\ref{eq:Skjdifferenceprime}), we get (still modulo $2R'$)
	\begin{align*}
		t^{n-1+k+1}&= t^n S_{k,1} + t^{n-1} S_{k+1, 2} + \cdots + t^2 S_{k+n-2,n-1}\\
		&= (t^{n-1}c_{1}+\cdots +tc_{n-1})S_{k,1}+t^{n-1} S_{k+1, 2} + \cdots + t^2 S_{k+n-2,n-1}\\
		&= t^{n-1}(c_{1}S_{k,1}+S_{k+1,2})+\cdots +t^{2}(c_{n-2}S_{k,1}+S_{k+n-2,n-1}) +tc_{n-1}S_{k,1}\\
		&= t^{n-1}S_{k+1,1}+\cdots + t^{2}S_{k+n-2,n-2}+tS_{k+n-1,n-1},
	\end{align*} 
	which completes the induction.
	
	Finally, since $c_{j}^{2}\equiv 0 \mod 2R'$ for any $j$, we have
	\begin{equation}
		\sum_{\substack{s\geq 2 \\ i_1 + \cdots + i_s = k}} c(i_1, \ldots, i_s)\equiv \sum_{\substack{s\geq 2,\,\, i_{1}<\cdots<i_{s} \\ i_1 + \cdots + i_s = k} } s!\cdot c(i_1, \ldots, i_s)\equiv 0 \mod 2R',
	\end{equation} 
	thus $S_{k,1}\equiv c_{k} \mod 2R'$. Hence, the statement follows from (\ref{eq:formofpoweroft}).\end{proof}

Since $c_{i}(E)=(-1)^{i}2e_{i}$ and $c_{i}(E^{*})=(-1)^{i}c_{i}(E)$, it follows from the relation (\ref{chowringY}) that
\begin{equation*}
	c_{2i}(E\oplus E^{*})=\sum_{j=0}^{2i}c_{j}(E)c_{2i-j}(E^{*})=(-1)^i \cdot 4\big(	e_{i}^{2}-2e_{i-1}e_{i+1}+\cdots +(-1)^{i}e_{2i}\big)=0
\end{equation*}
for any $i>0$. Also, we have
\begin{align*}
	c_{2i+1}(E\oplus E^{*})&=
	\sum_{j=0}^{2i+1}c_{j}(E)c_{2i+1-j}(E^{*})
	\\
	&=\sum_{j=0}^{i+1}(-1)^{2i+1-j}c_{j}(E)c_{2i+1-j}(E)+\sum_{k=0}^{i+1}(-1)^k c_{2i+1-k}(E)c_{k}(E)=0
\end{align*}
for any $i \ge 0$. Therefore, we get
\begin{equation}\label{eq:cieestar}
c_{i}\big((E\oplus E^{*})\tens L \big)={2n\choose i}c_{1}(L)^{i}={2n\choose i}t^{i}.
\end{equation}

On the other hand, as $\bar{R}=\langle 2t, d_{i}\rangle$ and $c_{1}(L\tens L)=2t$, 
it follows from
\begin{equation*}
E^{*}\tens L=(E\tens L)^{*}\tens (L\tens L)
\end{equation*}
that
\begin{equation}\label{eq:cieplusestar}
c_{i}\big((E\oplus E^{*})\tens L \big)=c_{i}\big((E\tens L)\oplus ((E\tens L)^{*}\tens (L\tens L)) \big)\in \bar{R}.
\end{equation}
Hence, by (\ref{eq:cieestar}) we have
\begin{equation}\label{eq:twoniti}
	{2n\choose i}t^{i}\in \bar{R}
\end{equation}
for $i\geq 1$.

Finally, since we have
\begin{equation*}
	c_{i}\big((E\tens L)^{*}\tens (L\tens L) \big)=c_{i}\big( (E\tens L)^{*}   \big)=c_{i}( E\tens L)=d_{i} \text{ in } R'/2R',
\end{equation*}
it follows from (\ref{eq:cieplusestar}) that
\begin{equation*}
c_{2i}\big((E\oplus E^{*})\tens L \big)=d_{i}^{2} \text{ in } R'/2R'.
\end{equation*}
Therefore, by (\ref{eq:cieestar}) and by Kummer's theorem we get
\begin{equation}\label{eq:disquared}
	d_{i}^{2}={2n\choose 2i}t^{2i}={n\choose i}t^{2i} \text{ in } R'/2R'
\end{equation}
for all $1\leq i\leq n-1$.

\section{Main results}
In this section, we provide the proof of Theorem \ref{thm:main}, which is established in Corollaries \ref{cor:intervalupperbd}, \ref{cor:2powerthreeupp}, and \ref{cor:3powerthreeupp}. 
The key aspect of the proof (addressed in Proposition \ref{prop:totaro2divisible}) is to detect an element of $R$ of the top degree that has the same $2$-divisibility in $\hat{R}$ as an element of $\tilde{R}$ of top degree. Here, by the \emph{2-divisibility} of an element $r \in \hat{R}$ in $\hat{R}$ we mean the biggest integer (denoted $\hat{v}_2(r)$) such that $r$ is divisible by $2^{\hat{v}_2(r)}$ in $\hat{R}$.

Throughout this section, we fix an even integer $n$. For any subset $I$ of $[1,n-1]$, we define
\begin{equation*}
	c(I)=\prod_{i\in I}c_{i},\quad e(I)=\prod_{i\in I}e_{i},\quad d(I)=\prod_{i\in I}d_{i}.
\end{equation*}

\begin{dfn}\label{def:totarodecomp}
	Let $J \subset [1,n-1]$ be a subset. The sum of all elements in $J$ is called the \emph{degree} of $J$ and is denoted by $\operatorname{deg}J$. We say that $J$ is \emph{Totaro-decomposable} if $\operatorname{deg}J=2^a-1$ for some integer $a\geq 1$ and 
	$J$
	can be decomposed into a
	disjoint union of powers of $2$ and pairs with the sum of elements equal to a power of $2$. A subset $J$ is called \emph{strongly Totaro-decomposable} if it is Totaro-decomposable and $$J\cap [1,2^{v_{2}(n)}]=\{2^{i}\,|\, i\in [0,v_2(n)]\}\cap [1,n-1  ].$$
\end{dfn}

We will use the following result, which is essentially the same as \cite[Lemma 5.4]{Totaro_Spin}.

\begin{lemma}\cite[\S 4, 5]{Totaro_Spin}\label{lem:totarodecomp}
	Let $J$ be a subset of $[1,n-1]$ such that $\deg J= 2^a-1$ for some integer $a\geq 1$ and let $I=[1,n-1]\setminus J$. Then, 
	\begin{equation}\label{eq:totarodivisibilitypow2_prime}
			\hat{v}_{2}\big(e_{1}^{\deg J}c(I)t^{n-1}\big)=n-a-1
	\end{equation}
	if $J$ is Totaro-decomposable, and
	\begin{equation}\label{eq:totarodivisibilitypow2_prime2}
		\hat{v}_{2}\big(e_{1}^{\deg J}c(I)t^{n-1}\big)\geq n-a
	\end{equation}
	otherwise.
\end{lemma}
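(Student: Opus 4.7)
The plan is to translate the $2$-divisibility question from $\hat{R} = \CH(X)$ into $\CH_0(Y)$, where it becomes the combinatorial content of \cite[Lemma~5.4]{Totaro_Spin}. The translation rests on two inputs: the identity $c_i = (-1)^i 2 e_i$ from Section~\ref{subsec:two-step}, and the projective bundle structure of $X$ over $Y$.

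First, I would use $c_i = (-1)^i 2 e_i$ to write
\[
c(I) = \prod_{i \in I}(-1)^i 2 e_i = \varepsilon \cdot 2^{|I|} e(I), \qquad \varepsilon = \pm 1,
\]
so that $\hat{v}_2(e_1^{\deg J} c(I) t^{n-1}) = |I| + \hat{v}_2(e_1^{\deg J} e(I) t^{n-1})$. Next, the projective bundle decomposition (\ref{chowringXprime}) gives that multiplication by $t^{n-1}$ is an isomorphism
\[
\CH_0(Y) = \Z \cdot e([1, n-1]) \xrightarrow{\sim} \CH_0(X) = \Z \cdot x_0.
\]
A degree check ($\deg J + \sum_{i \in I} i = 1 + 2 + \cdots + (n-1) = \dim Y$) places $e_1^{\deg J} e(I)$ in $\CH_0(Y)$, so I would write $e_1^{\deg J} e(I) = \mu \cdot e([1, n-1])$ for a unique $\mu \in \Z$. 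Since $\hat{R}$ is a free abelian group and $x_0$ generates $\CH_0(X)$, this yields $\hat{v}_2(e_1^{\deg J} e(I) t^{n-1}) = v_2(\mu)$, reducing the question to computing $v_2(\mu)$.

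The remaining step is to evaluate $v_2(\mu)$. The key tool is the Frobenius-type identity $e_1^{2^j} \equiv \pm e_{2^j} \pmod 2$, which follows by iterating (\ref{chowringY}). Expanding $e_1^{2^a - 1} = e_1 \cdot e_1^2 \cdot e_1^4 \cdots e_1^{2^{a-1}}$ and tracking how the resulting power-of-$2$ indices must match the elements of $J = [1, n-1] \setminus I$ (either directly or as pairs $(p, q)$ with $p+q=2^j$ produced by reapplying (\ref{chowringY})), an induction on this block structure would give $v_2(\mu) = |J| - a$ when $J$ is Totaro-decomposable (the excess $|J| - a$ equaling the number of pairs in a Totaro decomposition) and $v_2(\mu) \ge |J| - a + 1$ otherwise. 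Combining with $|I| = n - 1 - |J|$ would produce the asserted $n - a - 1$ in the Totaro-decomposable case and $\ge n - a$ in general.

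The hard part will be the combinatorial bookkeeping of $2$-adic contributions in this last step; however, this is carried out in detail in \cite[Lemma~5.4]{Totaro_Spin}, so the plan is to invoke it rather than to reprove it.
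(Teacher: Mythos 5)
Your proposal is correct and tracks the paper's proof closely: both reduce the $\hat R$-divisibility of the top-degree class to a single integer coefficient in $\CH_0(Y)\cong\Z$ (your $\mu$; the paper's $b(J)$ up to a known power of $2$) and then hand the $2$-adic bookkeeping to Totaro's Lemma~5.4. The one genuine simplification the paper makes that you skip is to carry out the expansion-and-multiplication step in the Chern classes $c_i$ rather than the $e_i$: by (\ref{eq:relationcitwo}) one has $c_j^2\equiv 0\pmod 2$, so each cross-term $c(J')c(I)$ with $J'\neq J$ (which necessarily contains a square since $J'\cap I\neq\emptyset$) visibly contributes an extra factor of $2$ to the coefficient of $c([1,n-1])$, and one converts to $e_1$ and $e([1,n-1])$ only at the very end via $c_1=-2e_1$ and $c([1,n-1])=\pm 2^{n-1}e([1,n-1])$. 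In the $e_i$ variables one has only $e_j^2\equiv\pm e_{2j}\pmod 2$, so the analogous evenness of the cross-term contributions to your $\mu$ is not immediate and would essentially force you to pass back through the $c_i$'s or re-run Totaro's argument anyway. Also, your parenthetical that the excess $|J|-a$ equals the number of pairs in a Totaro decomposition holds only when the block-sums are pairwise distinct powers of $2$; this is harmless since the valuation is ultimately taken from Totaro's lemma rather than from that heuristic.
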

\begin{proof}
By using (\ref{eq:relationcitwo}), write
\begin{equation*}
c_{1}^{\deg J}=\sum b(J')c(J')	
\end{equation*}
for some integers $b(J')$, where the sum ranges over all subsets $J'\subset [1,n-1]$ with $\deg J'=\deg J$. (For more details on how to get square-free monomials using (\ref{eq:relationcitwo}), see \cite[Proof of Proposition 86.16]{EKM}.) Then, by \cite[Lemma 5.4]{Totaro_Spin}
\begin{equation}\label{ordercheckv22}
	v_{2}(b(J'))=\deg J -a 
\end{equation}
if $J'$ is Totaro-decomposable, and
\begin{equation}\label{ordercheckv33}
	v_{2}(b(J'))\geq \deg J -a +1
\end{equation}
otherwise.

Since $I\cap J'\neq \emptyset$ for any $J'\neq J$, by (\ref{eq:relationcitwo}) we have
\begin{equation*}
c_{1}^{\deg J}c(I)=b(J) c([1,n-1])+2\sum_{J'\neq J}b(J')k(J')c([1,n-1])
\end{equation*}
for some integers $k(J')$. As $c_{1}=-2e_{1} $ and  
$c([1,n-1])=\pm 2^{n-1}e([1,n-1])$,
the equations (\ref{eq:totarodivisibilitypow2_prime}) and (\ref{eq:totarodivisibilitypow2_prime2}) follow from (\ref{ordercheckv22}) and (\ref{ordercheckv33}). Note that the multiplication by the term $t^{n-1}$ does not affect on the $2$-divisibility of the element $e_{1}^{\deg J}c(I)$ in $\hat{R}$. \end{proof}

\begin{remark}\label{rmk:totarodecomparbitraryY}
In this lemma, instead of $c(I)t^{n-1}$, we can take an a priori arbitrary $y \in R'$ such that $e_1^{\deg J} y \in \tilde{R}$ 
is homogeneous of top degree and still get $\hat{v}_2 (e_1^{\deg J} y) \ge n-a-1$. 
Indeed, starting with such an arbitrary $y$, first, we can assume without loss of generality, using (\ref{relationtandc}) and (\ref{eq:relationcitwo}), 
that $y = c(I') t^j$, where $I'$ is an arbitrary subset of $[1,n-1]$ and $0 \le j \le n-1$. 
Second, if $j < n-1$, then $e_1^{\deg J} c(I')$ is a monomial in $\CH(Y)$ of degree bigger than $\dim Y$, i.e. $e_1^{\deg J} c(I')=0$.
And if $j=n-1$, then $\deg I'+\deg J = \frac{n(n-1)}2$, and we get back exactly the situation of Lemma \ref{lem:totarodecomp} itself.
\end{remark}

\begin{remark}\label{rmk:generalJ}
	Let $v'_{2}(r)$ denote the $2$-divisibility of an element $r\in R'$ in $R'$. As shown in \cite[\S4]{Totaro_Spin}, 
	\[
	v'_{2}(c_{1}^{\deg J}) \geq \deg J - S_{2}(\deg J),
	\]  
	for any subset $J$ of $[1,n-1]$, where $S_{2}(\deg J)$ denotes the sum of the base-$2$ digits of $\deg J$ (Note that in \cite[\S4]{Totaro_Spin}, the $2$-divisibility is computed not in $R'$ but in the ring generated by the Chern classes $c_1, \ldots, c_{n-1}$. However, for monomials involving only the $c_i$, such as $c_1^{\deg J}$, the $2$-divisibility remains the same.) Thus, for $I = [1, n-1] \setminus J$, we obtain  
	\[
	v'(c_{1}^{\deg J} c(I) t^{n-1}) \geq \deg J - S_{2}(\deg J).
	\]  
	Then, as in the arguments of \cite[\S4]{Totaro_Spin}, the top-degree element $c_1^{\deg J} c(I) t^{n-1}$ is a multiple of $2^{\deg J - S_2(\deg J)} c([1,n-1]) t^{n-1}$. In $\hat{R}$, since $c_1 = -2e_1$, it follows that $2^{\deg J} e_1^{\deg J} c(I) t^{n-1}$ is a multiple of  
	\[
	2^{n-1+\deg J - S_2(\deg J)} e([1,n-1]) t^{n-1}.
	\]  
	Thus, we obtain  
	\[
	\hat{v}(e_1^{\deg J} c(I) t^{n-1}) \geq n-1 - S_2(\deg J).
	\]  
\end{remark}

\smallskip

The following result can be viewed as an application of Lemma \ref{lem:totarodecomp} under the assumption of a strongly Totaro-decomposable subset. In the next corollary, we apply this result to replace an element $d(I')$ in $R$ with $c(I')$ in the larger ring $\tilde{R}$ for a specific subset $I'$.

\begin{lemma}\label{lem:totarodecomposableab}
	Let $q\leq v_{2}(n)$ be a nonnegative integer. Let $J$ be a strongly Totaro-decomposable subset of $[1,n-1]$ with $I=[1,n-1]\backslash J$. Assume that $I'$ is a subset of $[1,n-1]$ containing $[1,2^{q}]\setminus \{2^{i}\,\,|\,\, i\in [0,q-1]\}$ such that $e_{1}^{\deg J}c(I')y$ is an element of top degree in $\tilde{R}$ for some homogeneous element $y\in R'$. Then, we have
	$$
e_{1}^{\deg J}\cdot c(I')\cdot y \equiv 0 \mod 2^{p+1}\hat{R},
	$$
		where $p=\hat{v}_2(e_1^{\deg J} c(I) t^{n-1})$.
\end{lemma}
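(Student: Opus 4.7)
The plan is to expand $c(I')y$ canonically in $R'$ as a $\Z$-linear combination of square-free $c$-monomials times powers of $t$, and bound the $2$-adic valuation of the result term by term. Using the Grassmann relations $(\ref{eq:relationcitwo})$ and the $t$-relation $(\ref{relationtandc})$ one writes $c(I')y=\sum_{K,j}\lambda_{K,j}c(K)t^j$ in $R'$, with $K$ square-free and $0\le j\le n-1$. A degree count shows that any term with $j<n-1$ satisfies $\deg K+\deg J>\dim Y$, so $e_1^{\deg J}c(K)\in\CH(Y)$ vanishes; hence only $j=n-1$ contributes, and in every such term $\deg K=\deg I$. Writing $\deg J=2^a-1$, the generic bound $\hat{v}_2(e_1^{\deg J}c(K)t^{n-1})\ge n-a-1=p$ --- which follows from the same argument as Remark \ref{rmk:generalJ} applied with a general $K$ of degree $\deg I$ in place of $I$ --- shows that every contribution with even $\lambda_{K,n-1}$ has valuation at least $p+1$.

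The crux is therefore to show $\hat{v}_2(e_1^{\deg J}c(K)t^{n-1})\ge p+1$ for every $K$ with odd $\lambda_{K,n-1}$. I would first establish that odd $\lambda_{K,n-1}$ forces $K\supset I'$: reducing $y$ modulo $2R'$, where every $c_i^2$ vanishes, one writes $y\equiv\sum_\ell c(I''_\ell)t^{j_\ell}\pmod{2R'}$, and then
\[
c(I')\cdot c(I''_\ell)\equiv\begin{cases}c(I'\cup I''_\ell) & \text{if } I'\cap I''_\ell=\emptyset,\\ 0 & \text{otherwise,}\end{cases}\pmod{2R'}
\]
so every odd-coefficient contribution to $c(I')y$ is of the form $c(K)t^j$ with $K\supset I'$. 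Since $I'\supset M$ and $2^q\in M$, we deduce $2^q\in K$ and that $[1,n-1]\setminus K$ intersects $[1,2^q]$ only in a subset of $\{1,2,4,\ldots,2^{q-1}\}$.

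I would then verify that $[1,n-1]\setminus K$ cannot be Totaro-decomposable under these constraints. Strong Totaro-decomposability of $J$ together with $q\le v_2(n)$ forces $a>q$, so any Totaro-decomposition of a subset of degree $2^a-1$ has to represent the power $2^q$, either as a singleton $\{2^q\}$ (impossible since $2^q\notin[1,n-1]\setminus K$) or as a pair $\{2^i,2^j\}$ with $2^i+2^j=2^q$ and both $2^i,2^j\in\{1,2,4,\ldots,2^{q-1}\}$; the arithmetic fact that $2^i+2^j$ is a power of $2$ only when $i=j$ forces $i=j=q-1$, producing a repeated element and not a valid pair. The sharper form of Lemma \ref{lem:totarodecomp} (which gives $v_2(b([1,n-1]\setminus K))\ge\deg J-a+1$ for non-Totaro-decomposable complements in the expansion $c_1^{\deg J}c(K)=\sum_{J'}b(J')c(J')c(K)$) then yields $\hat{v}_2(e_1^{\deg J}c(K)t^{n-1})\ge n-a=p+1$, completing the odd-part estimate.

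The main obstacle I anticipate is the reduction of $y$ modulo $2R'$: one must argue carefully that multiplication by $c(I')$ moves everything into the $\mathbb{F}_2$-span of those $c(K)t^j$ with $K\supset I'$, which is the mechanism by which the hypothesis $I'\supset M$ converts into the combinatorial constraint on $[1,n-1]\setminus K$ needed to block its Totaro-decomposability. Once this is in place, combining the even-part and odd-part estimates yields $\hat{v}_2(e_1^{\deg J}c(I')y)\ge p+1$, as required.
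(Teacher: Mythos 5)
Your proof follows the same skeleton as the paper's: reduce to monomials, show that only top-degree square-free terms $c(K)t^{n-1}$ with $\deg K=\deg I$ survive, use the generic bound $\hat v_2 \ge n-a-1=p$ to dispose of even-coefficient contributions, observe (mod $2R'$) that odd coefficients force $K\supset I'$, and then aim to show $J':=[1,n-1]\setminus K$ is not Totaro-decomposable so that Lemma~\ref{lem:totarodecomp} gives $\hat v_2\ge n-a=p+1$. The careful bookkeeping of odd versus even coefficients is a nice explicit rendering of what the paper elides as ``we may assume.''

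The gap is in the non-Totaro-decomposability step. You assert that \emph{any} Totaro-decomposition of a set of degree $2^a-1$ ``has to represent the power $2^q$'' for $q<a$, i.e.\ must contain $2^q$ among its singleton values and pair sums, and you only rule out a singleton $\{2^q\}$ or a pair summing to $2^q$. But the claimed representability is false in general: the terms of a Totaro-decomposition are a \emph{multiset} of powers of two summing to $2^a-1$, and such a multiset need not contain $2^q$. For instance, $J'=\{1,2,3,4,6,8,10,13,16\}$ is Totaro-decomposable of degree $63=2^6-1$, with singletons $\{1\},\{2\},\{4\},\{8\},\{16\}$ and pairs $\{3,13\},\{6,10\}$, and the multiset of term values is $\{1,2,4,8,16,16,16\}$ --- $2^5$ never appears. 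So ruling out $2^q$ as a term is not by itself a contradiction with $\deg J'=2^a-1$. What is needed, and what the paper does, is a residue argument modulo $2^{q+1}$: since $J'\cap[1,2^q]\subseteq\{1,\dots,2^{q-1}\}$, a pair of elements both $\le 2^q$ would consist of two distinct powers of two whose sum cannot be a power of two, so every pair sum is $\ge 2^{q+1}$; hence modulo $2^{q+1}$ only singletons $\le 2^{q-1}$ contribute, giving $\deg J'\equiv m\ (\mathrm{mod}\ 2^{q+1})$ with $0\le m\le 2^q-1$, while $\deg J'=2^a-1\equiv-1\equiv 2^{q+1}-1\ (\mathrm{mod}\ 2^{q+1})$ because $a>q$; the contradiction $2^{q+1}-1>2^q-1$ then shows $J'$ is not Totaro-decomposable. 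You have all the needed combinatorial ingredients ($2^q\notin J'$, small pairs impossible, $a>q$), but the sentence ``has to represent $2^q$'' is not a consequence of them and needs to be replaced by this congruence.
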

\begin{proof}
Let $x=e_{1}^{\deg J}c(I')y$. It suffices to consider the case where $y$ is a monomial in $c_{1},\ldots, c_{n-1}$, and $t$. Furthermore, since the element $x$ is of top degree in $\tilde{R}$, we may assume that 
\begin{equation*}
x=e_{1}^{\deg J}c(I')t^{n-1}
\end{equation*}
for some subset $I'$ of $[1,n-1]$ containing $[1,2^{q}]\setminus \{2^{i}\,|\, i\in [0,q-1]\}$, where $\deg I'=\deg I$.

	Let $J'=[1,n-1]\setminus I'$. Then, $\deg J=\deg J'$ and
	\begin{equation}\label{eq:Jprimesubset}
		J'\cap [1,2^{q}]\subseteq\{2^{i}\,|\, i\in [0,q-1]\}, 
	\end{equation}
	thus $\deg(J'\cap [1,2^{q}])\leq 2^{q}-1$.  Assume that $J'$ is Totaro-decomposable. If $J'$ contains a pair whose sum is a power of $2$, then by (\ref{eq:Jprimesubset}), this sum is divisible by $2^{q+1}$. Hence, $\deg J'$ is congruent to the degree of a subset of $J'\cap [1,2^{q}]$ modulo $2^{q+1}$, that is, 
	\begin{equation}\label{eq:degreeJprimem}
	\deg J'\equiv m \mod 2^{q+1}
	\end{equation}
	 for some integer $0\leq m \leq 2^{q}-1$.
	
	On the other hand, since $J$ is strongly Totaro-decomposable, a similar argument shows that $\deg J\equiv -1 \mod 2^{v_{2}(n)+1}$, thus  $\deg J'\equiv -1 \mod 2^{q+1}$, which contradicts (\ref{eq:degreeJprimem}). Hence,  $J'$ is not Totaro-decomposable. Therefore, the statement follows from Lemma \ref{lem:totarodecomp}.\end{proof}

\begin{remark}\label{rem:almostoddnstronglytotarodec}
Let $J \subseteq [1,n-1]$ be a Totaro-decomposable subset. 
A similar but simpler argument using residues modulo $4$ of $\deg J$ and of sums of pairs of elements of $J$ shows that 
if $\deg J > 1$, then $1, 2 \in J$. 
Therefore, if $\deg J > 1$ and $v_2(n)=1$, then $J$ also satisfies the definition of a strongly Totaro-decomposable subset.
\end{remark}

\begin{corollary}\label{cor:totarodecomposableab}
Let $q$, $p$, $J$ and $I$ be as in Lemma \ref{lem:totarodecomposableab}. 
	Let $I'=[1,2^{q}]\setminus \{2^{i}\,\,|\,\, i\in [0,q]\}$. Assume that $e_{1}^{\deg J}c(I')y$ is an element of top degree in $\tilde{R}$ for some homogeneous element $y\in R'$. 
	Then, in $\hat{R}/2^{p+1}\hat{R}$, we have 
	$$
	e_{1}^{\deg J}\cdot d(I')\cdot y=e_{1}^{\deg J}\cdot c(I')\cdot y.
	$$
\end{corollary}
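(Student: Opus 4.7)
The plan is to expand $d(I')$ around $c(I')$ and show that the resulting error, once multiplied by $e_1^{\deg J} y$, lies in $2^{p+1}\hat R$. First I would write
\begin{equation*}
d(I')-c(I')=\sum_{\emptyset\neq S\subseteq I'} c(I'\setminus S)\prod_{i\in S}(d_i-c_i)
\end{equation*}
by expanding $d_i=c_i+(d_i-c_i)$ in each factor of $d(I')$ and collecting. By (\ref{chernclassd}) each $d_i-c_i=\sum_{k=0}^{i-1}\binom{n-k}{i-k}t^{i-k}c_k$. A first observation is that in $\hat R$ every $d_i-c_i$ is already divisible by $2$: the $k=0$ summand has coefficient $\binom{n}{i}$ with $v_2(\binom{n}{i})\geq v_2(n)-v_2(i)\geq 1$, using $q\leq v_2(n)$ together with $v_2(i)\leq q-1$ for each $i\in I'\subseteq [1,2^q-1]$, while for $k\geq 1$ the identity $c_k=(-1)^k 2e_k$ itself supplies the factor $2$.

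Next, for each nonempty $S$ and each term in the expansion of $\prod_{i\in S}(d_i-c_i)$, I would reduce the product of Chern classes $c_j$ (arising from the $c(I'\setminus S)$ factor together with the $c_{k_i}$ from each $d_i-c_i$) to a squarefree-in-$c_j$ monomial $c(I_{\mathrm{fin}})$ via repeated application of the relation (\ref{eq:relationcitwo}); each reduction of a squared $c_j$ extracts an additional factor of $2$. The upshot is that $e_1^{\deg J} c(I'\setminus S)\prod_{i\in S}(d_i-c_i)\,y$ becomes an integer linear combination of elements of the form $e_1^{\deg J} c(I_{\mathrm{fin}}) t^{j_1} y$, with the coefficients tracking all accumulated powers of $2$.

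For each such summand I would then apply the strongest available bound: either Lemma \ref{lem:totarodecomposableab} itself (when $I_{\mathrm{fin}}\supseteq [1,2^q]\setminus\{2^i:i\in[0,q-1]\}$), or Lemma \ref{lem:totarodecomp} case (\ref{eq:totarodivisibilitypow2_prime2}) (when $J_{\mathrm{fin}}:=[1,n-1]\setminus I_{\mathrm{fin}}$ fails to be Totaro-decomposable, giving $\hat v_2\geq n-a=p+1$ directly), or else Remark \ref{rmk:totarodecomparbitraryY} (which always yields $\hat v_2\geq p$) combined with one of the extra factors of $2$ identified above. The main obstacle is precisely this last step: verifying that in every monomial at least one of (a) an even binomial coefficient $\binom{n-k}{i-k}$, (b) a $c_k\to 2e_k$ conversion with $k\geq 1$, (c) a reduction of some $c_j^2$ via (\ref{eq:relationcitwo}), or (d) non-Totaro-decomposability of $J_{\mathrm{fin}}$, supplies the additional power of $2$ beyond the baseline $2^p$ of Remark \ref{rmk:totarodecomparbitraryY}. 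Since $I'$ excludes all of $\{2^i:i\in[0,q]\}$ and each $k_i$ satisfies $k_i\leq 2^q-2$, this case analysis is forced by the mod $2^{q+1}$ obstruction already used in the proof of Lemma \ref{lem:totarodecomposableab}, so one may hope to reuse that residue argument to close every case uniformly.
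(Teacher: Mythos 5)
Your decomposition $d(I')-c(I')=\sum_{\emptyset\neq S\subseteq I'} c(I'\setminus S)\prod_{i\in S}(d_i-c_i)$ is a reasonable global version of what the paper does iteratively: the paper replaces $d_k$ by $c_k$ one index at a time, from the smallest element of $I'$ upward, so that the accumulated set $I''$ of $c$-indices always equals $I'\cap[3,k-1]$; this tight control is what makes its case analysis short. Your version dispenses with that structure, and that is exactly where the proposal runs into trouble.

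The more serious flaw is your case (b). You observe that $d_i-c_i$ is divisible by $2$ in $\hat R$ because each $k\ge 1$ summand carries $c_k=(-1)^k2e_k$, and you list this as a source of ``an additional power of $2$ beyond the baseline $2^p$ of Remark \ref{rmk:totarodecomparbitraryY}.'' It is not. The baseline $\hat v_2\geq p$ in Remark \ref{rmk:totarodecomparbitraryY} (or in Lemma \ref{lem:totarodecomp}) is a bound on $e_1^{\deg J}y'$ for $y'\in R'$, and $c_k$ already belongs to $R'$; the factor of $2$ you extract from $c_k\mapsto 2e_k$ is already accounted for in the computation that produces that baseline. Concretely, $e_1^{\deg J}c_k z=-2e_1^{\deg J}e_kz$ gives $\hat v_2(e_1^{\deg J}e_kz)\geq p-1$, not an improved bound on the original element. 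So (b) supplies nothing, and the ``each $d_i-c_i$ is divisible by $2$'' observation, while true, is a red herring.

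Removing (b), what remains is (a) an even $\binom{n}{i}$ from the $k_i=0$ summand, (c) an extra $2$ from a $c_j^2$ reduction via (\ref{eq:relationcitwo}), and (d) application of Lemma \ref{lem:totarodecomposableab} to the resulting squarefree monomial. These are exactly the three tools the paper uses. But you explicitly do not verify that in every term one of them applies, and the verification is not as automatic as the last sentence of your proposal suggests: when all $k_i\geq 1$ and the monomial is squarefree, the resulting index set $I_{\mathrm{fin}}=(I'\setminus S)\cup\{k_i\}$ must contain $[1,2^l]\setminus\{2^j : j<l\}$ for some $l\leq v_2(n)$ in order to invoke Lemma \ref{lem:totarodecomposableab}, and this can fail to be apparent because $S$ may remove non--power-of-$2$ indices below $2^l$ from $I'$. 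One can close this (choose $l$ to be the smallest power-of-$2$ exponent among the $k_i$; if some non-power $j\in S$ lies in $[3,2^l)$ then the squarefreeness forces $f(j)\in S$ to be another non-power $<2^l$, and the strictly decreasing chain $j>f(j)>f(f(j))>\cdots$ forces a contradiction), but this chain argument is a genuine combinatorial step, and nothing like it appears in your proposal. As written, the proof has a gap at precisely the place you flag as ``the main obstacle.''
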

\begin{proof}
We may assume $q \ge 2$, otherwise $I'$ is empty. For each $k\in I'$, we replace a factor $d_{k}$ of $d(I')$ by $c_{k}$ using the relation (\ref{chernclassd}) as follows. As a first step, we expand the following:
\begin{equation}\label{eq:dionenchoose3}
	d(I')=\big({n\choose 3}t^{3}+{n-1\choose 2}t^{2}c_{1}+{n-2\choose 1}t^{1}c_{2}+c_{3}\big)\cdot d(I'\setminus \{3\}).
\end{equation}
Since, according to Kummer's theorem, the coefficient ${n\choose 3}$ is divisible by $2$, by Lemma \ref{lem:totarodecomp} (with Remark \ref{rmk:totarodecomparbitraryY}), the first term of (\ref{eq:dionenchoose3}) multiplied by $e_{1}^{\deg J}y$ vanishes in $\hat{R}/2^{p+1}\hat{R}$. Since the second and third terms in (\ref{eq:dionenchoose3}) have $c_{1}$ and $c_{2}$ as factors, respectively, by Lemma \ref{lem:totarodecomposableab}, the following relation $$
e_{1}^{\deg J}\cdot d(I')\cdot y=e_{1}^{\deg J}\cdot c_{3}\cdot d(I'\setminus \{3\})\cdot y.
$$
holds in $\hat{R}/2^{p+1}\hat{R}$.

Assume that for some $k \in I'$, $k > 3$, the relation $e_{1}^{\deg J}\cdot d(I')\cdot y=e_{1}^{\deg J}\cdot c(I'')\cdot d(I'\setminus I'')\cdot y$ holds in $\hat{R}/2^{p+1}\hat{R}$, where $I''$ denotes the subset of $I'$ consisting of all consecutive elements less than $k$, starting from $3$. Let $z=d(I'\backslash ( I''\cup \{k\} ) )\cdot y$. Similarly, using the relation (\ref{chernclassd}), we expand:
\begin{equation}\label{expandcIprime}
	c(I'')\cdot d_{k}=c(I'')\big({n\choose k}t^{k}+{n-1\choose k-1}t^{k-1}c_{1}+\cdots+{n-k+1\choose 1}t^{1}c_{k-1}+c_{k}\big).
\end{equation}
Since the coefficient ${n\choose k}$ is divisible by $2$ due to Kummer's theorem, it follows from Lemma \ref{lem:totarodecomp} and Remark \ref{rmk:totarodecomparbitraryY} that the first term of $(\ref{expandcIprime})$, when multiplied by $e_{1}^{\deg J}z$, vanishes in 
$\hat{R}/2^{p+1}\hat{R}$. 
Similarly, since $I''=[1,k-1]\setminus \{2^{i}\mid i\in \N\cup \{0\} \}$, every term of (\ref{expandcIprime}) containing $c(I'')\cdot c_{j}$, where $j < k$ and $j$ is not a power of $2$, has a factor of $c_{j}^{2}$. Thus, by (\ref{eq:relationcitwo}), each of these terms is divisible by $2$ in $R'$. Consequently, when multiplied by $e_{1}^{\deg J}z$, each such term vanishes in 
$\hat{R}/2^{p+1}\hat{R}$. 
Finally, by Lemma \ref{lem:totarodecomposableab}, every term of (\ref{expandcIprime}) containing $c(I'')\cdot c_{i}$, where $i$ is a power of $2$, also vanishes in 
$\hat{R}/2^{p+1}\hat{R}$ 
when multiplied by $e_{1}^{\deg J}z$, thereby completing the proof by induction.
\end{proof}

We now provide a key ingredient in the proof of Theorem \ref{thm:main}, where we establish a top-degree element in $R$ whose $2$-divisibility matches exactly with that of a top-degree element in $\tilde{R}$, which is computable.

\begin{proposition}\label{prop:totaro2divisible}
Let $m$ be a divisor of $2^{v_{2}(n)}$ such that $2\leq m \leq n/2$. Let $J$ be a strongly Totaro-decomposable subset of $[1,n-1]$ with $I=[1,n-1]\backslash J$. Assume that $I$ contains $[n-m + 1, n - 1]$. Then,
\begin{equation*}
	\hat{v}_{2}(e_{1}^{\deg J}\cdot d(I \setminus \{n - m + 1\}) \cdot t^{2n-m} ) = p,
\end{equation*}
where $p=\hat{v}_2(e_1^{\deg J} c(I) t^{n-1})$. 
Moreover, if $m=2^{v_{2}(n)}$, then
\begin{equation*}
	\hat{v}_{2}(e_{1}^{\deg J}\cdot d(I \setminus \{n - m + 1\}) \cdot d_{m}^{\,\,\frac{2n - m}{m}} )=p.
\end{equation*}
\end{proposition}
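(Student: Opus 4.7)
My plan is to prove both assertions by exhibiting an explicit congruence of each target element to $u \cdot e_1^{\deg J} c(I) t^{n-1}$ for some odd integer $u$, modulo $2^{p+1}\hat{R}$. Since $\hat{v}_2(e_1^{\deg J} c(I) t^{n-1}) = p$ by Lemma \ref{lem:totarodecomp} (noting that $J$ is Totaro-decomposable because it is strongly Totaro-decomposable), this will immediately give $\hat{v}_2 = p$. The lower bound $\hat{v}_2 \ge p$ for both target elements follows directly from Remark \ref{rmk:totarodecomparbitraryY} (with $y = d(I \setminus \{n-m+1\}) t^{2n-m}$ or $y = d(I \setminus \{n-m+1\}) d_m^{(2n-m)/m}$ respectively, both in $R'$ and top-degree), so the real content is ruling out $\hat{v}_2 \ge p+1$.

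For the first assertion, I apply Lemma \ref{lem:expandtlarge2} with $k = n-m+1$ (valid because $2 \le m \le n/2$ forces $1 \le n-m+1 \le n-1$), obtaining $t^{2n-m} \equiv t^{n-1} c_{n-m+1} + \sum_{i=1}^{n-2} t^i a_i \pmod{2R'}$. Multiplying by $e_1^{\deg J} d(I \setminus \{n-m+1\})$, the $2R'$-remainder gives $2 \cdot e_1^{\deg J} \cdot r$ for some top-degree $r \in R'$, which by Remark \ref{rmk:totarodecomparbitraryY} lands in $2^{p+1}\hat{R}$. For the main term $e_1^{\deg J} d(I \setminus \{n-m+1\}) t^{n-1} c_{n-m+1}$, I decompose $I \setminus \{n-m+1\} = I' \cup I''$ with $I' = [1,m] \setminus \{1,2,4,\ldots,m\}$ (matching the $I'$ of Corollary \ref{cor:totarodecomposableab} with $q = v_2(n)$, which works since $J$ is strongly Totaro-decomposable) and $I'' = (I \cap [m+1,n-1]) \setminus \{n-m+1\}$. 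Corollary \ref{cor:totarodecomposableab} replaces $d(I')$ by $c(I')$ modulo $2^{p+1}\hat{R}$, and an analogous but more delicate iterative argument, adapted to handle each $j \in I''$ (particularly those where $\binom{n}{j}$ is odd), replaces $d(I'')$ by $c(I'')$, ultimately yielding $u \cdot e_1^{\deg J} c(I) t^{n-1}$ modulo $2^{p+1}\hat{R}$. For the middle terms $e_1^{\deg J} d(I \setminus \{n-m+1\}) t^i a_i$ with $i \in [1,n-2]$, I plan to show each belongs to $2^{p+1}\hat{R}$: expanding $a_i$ as a sum of $c$-monomials and expanding $d(I \setminus \{n-m+1\})$ via (\ref{chernclassd}), each resulting monomial either carries a $c_j^2 \in 2R'$ factor (giving $\hat{v}_2 \ge p+1$ via Remark \ref{rmk:totarodecomparbitraryY}) or, in the square-free case, produces an $e$-monomial whose combined $\CH(Y)$-degree with $e_1^{\deg J}$ exceeds $\dim Y = n(n-1)/2$ and therefore vanishes in $\CH(Y) \subset \hat{R}$ (using crucially that $i < n-1$).

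For the moreover part, when $m = 2^{v_2(n)}$, Kummer's theorem makes $\binom{n}{m}$ odd, and $(2n-m)/m = 2n/m - 1$ is also odd. Applying (\ref{eq:disquared}) gives $d_m^{(2n-m)/m} = d_m \cdot (d_m^2)^{(n-m)/m} \equiv \binom{n}{m}^{(n-m)/m} \cdot d_m \cdot t^{2(n-m)} \pmod{2R'}$. Expanding $d_m t^{2(n-m)} = \binom{n}{m} t^{2n-m} + \sum_{k=1}^m \binom{n-k}{m-k} t^{2n-m-k} c_k$ produces $d_m^{(2n-m)/m} \equiv \binom{n}{m}^{n/m} t^{2n-m} + (\text{error}) \pmod{2R'}$ with $\binom{n}{m}^{n/m}$ odd. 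The first assertion then controls the $t^{2n-m}$-contribution (up to the odd unit $\binom{n}{m}^{n/m}$), and each error term $\binom{n-k}{m-k} e_1^{\deg J} d(I \setminus \{n-m+1\}) t^{2n-m-k} c_k$ for $k \in [1,m]$ is placed in $2^{p+1}\hat{R}$ by case analysis: if $k$ is a power of $2$ (so $k \in J$), I apply Corollary \ref{cor:totarodecomposableab} followed by Lemma \ref{lem:totarodecomposableab} to $I' \cup \{k\}$ (which contains the requisite set); if $k$ is a non-power of $2$ (so $k \in I' \subseteq I \setminus \{n-m+1\}$), pairing $c_k$ with the $d_k$ factor in $d(I \setminus \{n-m+1\})$ creates $c_k^2 \in 2R'$, supplying the extra factor of $2$ via Remark \ref{rmk:totarodecomparbitraryY}. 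The primary obstacle I anticipate is the middle-term analysis in the first assertion: the interplay between the $d$-expansion, the $c$-monomial structure of $a_i$, and the $\CH(Y)$-degree constraint requires a careful joint argument ensuring that every monomial either produces a $c_j^2$ factor or overflows in $e$-degree. A secondary difficulty is extending Corollary \ref{cor:totarodecomposableab} to replace $d(I'')$ by $c(I'')$, which must handle cases where $\binom{n}{j}$ is odd for some $j \in I''$ and thus cannot be reduced directly as in the Corollary's original proof.
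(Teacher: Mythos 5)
Your proposal takes a genuinely different route from the paper, and it has a fatal gap. You apply Lemma \ref{lem:expandtlarge2} with $k=n-m+1$ directly to $t^{2n-m}$, obtaining $t^{2n-m}\equiv t^{n-1}c_{n-m+1}+\sum_{i<n-1} t^i a_i\ (\mathrm{mod}\ 2R')$, and then claim that each middle term $e_1^{\deg J}\,d(I\setminus\{n-m+1\})\,t^ia_i$ vanishes in $\hat R/2^{p+1}\hat R$ by a degree-overflow argument in $\CH(Y)$. This is false. The degree-overflow argument requires that the only $t$-power appearing is $t^i$ with $i<n-1$, which is true when the remaining factor is a pure $c$-polynomial — as in the paper, where Lemma \ref{lem:expandtlarge2} is applied only after every $d_j$ has been replaced by $c_j$, so that the factor $c(I\setminus\{n-1\})$ contributes no $t$'s. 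In your setup, $d(I\setminus\{n-m+1\})$, once expanded via (\ref{chernclassd}), produces additional $t$-powers; after reducing via (\ref{relationtandc}), the resulting $t^{n-1}$-coefficient need not be even, and in fact typically carries the entire odd part of the answer. Concretely, take $n=6$, $m=2$, $J=\{1,2\}$, $I=\{3,4,5\}$, so $x=e_1^3 d_3 d_4 t^{10}$ and $p=3$. One computes $d_3d_4\equiv c_3t^4+c_3c_4\ (\mathrm{mod}\ 2R')$, hence $d_3d_4\,t^5c_5\equiv c_3c_5t^9+c_3c_4c_5t^5$, and since $t^9\equiv c_4t^5+\text{(lower)}\ (\mathrm{mod}\ 2)$, the two $c_3c_4c_5t^5$ contributions \emph{cancel}, so that $e_1^3d_3d_4\,t^5c_5\equiv 0\ (\mathrm{mod}\ 2^{p+1}\hat R)$. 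Your ``main term'' therefore contributes nothing, while the ``middle terms'' $e_1^3d_3d_4\sum a_i t^i$ must carry the whole odd unit (indeed $d_3d_4\sum a_it^i\equiv c_3c_4c_5t^5+\text{lower}\ (\mathrm{mod}\ 2R')$). So your plan to discard the middle terms and extract the answer from the main term is wrong.

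The secondary issue you flagged yourself — replacing $d(I'')$ by $c(I'')$ for $I''\subset[m+1,n-1]$ — is also real and is not resolved by your sketch, since for $j\in I''$ with $\binom{n}{j}$ odd the leading term of $d_j$ survives modulo $2$, and the iterative scheme of Corollary \ref{cor:totarodecomposableab} (which relies on evenness of $\binom{n}{k}$) does not carry over. The paper sidesteps both difficulties at once: after replacing only $d(I_1)$ by $c(I_1)$ via Corollary \ref{cor:totarodecomposableab}, it uses the identities (\ref{eq:t2nminusone}), (\ref{eq:tnconeprime}), and (\ref{eq:disquared}) to collapse the entire product $d(I_2\setminus\{n-m+1\})t^{2n-m}$ to $c(I_2\setminus\{n-1\})t^{2n-2}$ in $R'/2R'$ — in particular, this replaces all the $d_j$'s with $j>m$ by $c_j$'s without ever examining the parity of $\binom{n}{j}$ — and only then invokes Lemma \ref{lem:expandtlarge2}, now with $k=n-1$ and with a pure $c$-factor, so that the degree-overflow argument is valid. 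You would need to find and use this (or an equivalent) algebraic simplification before applying Lemma \ref{lem:expandtlarge2}; as written, your reduction does not go through.
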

\begin{proof}
Let $x=e_{1}^{\deg J} \cdot d(I \setminus \{n - m + 1\}) \cdot t^{2n-m}$. Write
\begin{equation*}
	I=([1,m]\setminus \{2^{i}\,\,|\,\, i\in [0,v_2(m)]\,\})\cup (I\cap [m+1,n-1])=:I_{1}\cup I_{2}.
\end{equation*}
Then, by Corollary \ref{cor:totarodecomposableab}, we have
\begin{equation}\label{eq:dcreplacement}
	x=e_{1}^{\deg J}\cdot c(I_{1})\cdot d(I_{2}\setminus \{n-m+1\})\cdot t^{2n-m}
\end{equation}
in $\hat{R}/2^{p+1}\hat{R}$.

By (\ref{eq:t2nminusone}) and (\ref{eq:tnconeprime}), for any $m \ge 4$, the product of last two terms in (\ref{eq:dcreplacement}) becomes
\begin{equation}\label{eq:dicaptwop}
	d(I_{2}\setminus \{n-m+1, n-1\})\cdot (d_{n-m+1}t^{2n-2}+\cdots +d_{n-2}t^{2n-m+1})
\end{equation}
in $R'/2R'$. Since, by (\ref{eq:t2nminusone}) and (\ref{eq:disquared}),  
\begin{equation*}
	d_{n-m+j}^{2}t^{2n -1 -j}={n \choose n-m+j}t^{4n-2m+j -1}=0
\end{equation*}
in $R'/2R'$ for all $2\leq j \leq m-2$, the expression
(\ref{eq:dicaptwop}) simplifies to
\begin{equation}\label{eq:dicaptwopower}
	d(I_{2}\setminus \{n-1\})\cdot t^{2n-2}=c(I_{2}\setminus \{n-1\})\cdot t^{2n-2}
\end{equation}
in $R'/2R'$, where the equality in (\ref{eq:dicaptwopower}) follows from (\ref{chernclassd}) and (\ref{eq:t2nminusone}). Therefore, by Lemma \ref{lem:totarodecomp} and Remark \ref{rmk:totarodecomparbitraryY},
\begin{equation}\label{eq:dicaptwopowerprime}
	x=e_{1}^{\deg J}\cdot c(I_{1})\cdot c(I_{2}\setminus \{n-1\})\cdot t^{2n-2}
\end{equation}
in $\hat{R}/2^{p+1}\hat{R}$. Note that for $m=2$, the equality in (\ref{eq:dicaptwopowerprime}) follows directly from (\ref{chernclassd}) and (\ref{eq:t2nminusone}).

By Lemma \ref{lem:expandtlarge2}, we have
\begin{equation*}
	t^{2n-2}=t^{n-1}c_{n-1}+\sum_{i=1}^{n-2}t^{i}a_{i}	
\end{equation*}
in $R'/2R'$, where each $a_{i}$ is a homogeneous polynomial in $c_{1},\ldots, c_{n-1}$ of total degree $2n-2-i$, which is greater  than $n-1$ for any $1\leq i\leq n-2$. As 
\begin{equation*}
	\tfrac{n(n-1)}{2}=\deg(e_{1}^{\deg J}c(I))<\deg(e_{1}^{\deg J}c(I\setminus \{n-1\})\cdot a_{i})
\end{equation*}
in $\CH(Y)$, we have $e_{1}^{\deg J}c(I\setminus \{n-1\})\cdot a_{i}=0$ for each $1\leq i\leq n-2$. Hence, 
\begin{equation}\label{eq:edegJprime}
	e_{1}^{\deg J}\cdot c(I_{1})\cdot c(I_{2}\setminus \{n-1\}\})\cdot (t^{n-1}c_{n-1}+\sum_{i}t^{i}a_{i})=e_{1}^{\deg J}\cdot c(I)\cdot t^{n-1}
\end{equation}
in $\tilde{R}$. Therefore, it immediately follows from (\ref{eq:dicaptwopowerprime}) that 
$\hat{v}_{2}(x)=p$.

Now, assume that $m=2^{v_{2}(n)}$. Let $x'=e_{1}^{\deg J}\cdot d(I \setminus \{n - m + 1\}) \cdot d_{m}^{\,\,\frac{2n - m}{m}}$. Then, by Corollary \ref{cor:totarodecomposableab},
\begin{equation}\label{eq:xprimeeoneJ}
	x'=e_{1}^{\deg J}\cdot c(I_{1})\cdot d(I_{2}\setminus \{n-m+1\})\cdot d_{m}^{\,\,\frac{2n - m}{m}}
\end{equation}
in $\hat{R}/2^{p+1}\hat{R}$. Using the relation (\ref{chernclassd}), we expand as follows:
\begin{equation*}
	c(I_{1})\cdot d_{m}=c(I_{1})\big({n\choose m}t^{m}+{n-1\choose m-1}t^{m-1}c_{1}+\cdots+{n-m+1\choose 1}t^{1}c_{m-1}+c_{m}\big).
\end{equation*}
By Kummer's theorem, the coefficient ${n\choose m}$ of the term containing $t^{m}$ is odd. Thus, by the same reasoning as in the proof of Corollary \ref{cor:totarodecomposableab},  along with Lemmas \ref{lem:totarodecomp} (with Remark \ref{rmk:totarodecomparbitraryY}) and \ref{lem:totarodecomposableab}, we obtain the following relation from (\ref{eq:xprimeeoneJ}):
\begin{equation}\label{eq:edegJdtwop}
	x'=e_{1}^{\deg J}\cdot c(I_{1})\cdot d(I_{2}\setminus \{n-m+1\})\cdot t^{2n-m}
\end{equation}
in 
$\hat{R}/2^{p+1}\hat{R}$. 
Hence, from (\ref{eq:dcreplacement}), it follows that 
$\hat{v}_{2}(x')=\hat{v}_{2}(x)=p$.
\end{proof}

In the following three corollaries, we present the proof of Theorem \ref{thm:main}.

\begin{corollary}\label{cor:intervalupperbd}
Let $n$ be an even integer in $(2^{s}, 2^{s}+2^{s-1})\cup (2^{s}+2^{s-1}, 2^{s+1})$ for some integer $s\geq 3$. Then, we have
\begin{equation}\label{eq:inequalityHspin}
	\tau(\HSpin (2n))\leq 2\cdot\tau(\Spin (2n)).
\end{equation}
Moreover, we have
\begin{equation}\label{eq:equalityHspin}
\tau(\HSpin (2n))=\tau(\Spin (2n))
\end{equation}
for any $n\in (2^{s}, 2^{s}+m_{0}]\cup (2^s + 2^{s-3}, 2^s + 2^{s-2}) \cup (2^{s}+2^{s-2}, n_{0})\cup  (2^{s}+2^{s-1}, 2^{s+1})$, where $n_{0}$ and $m_{0}$ denote the integers in $(\ref{eq:defnzero})$ and $(\ref{eq:defmzero})$.
\end{corollary}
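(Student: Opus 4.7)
The plan is to exhibit, for each $n$ in the stated intervals, a top-degree element of $R$ whose $\hat R$-valuation matches $\tau_{2}(\Spin(2n))$ (in the equality sub-intervals) or exceeds it by at most $1$ (in the general case). Combined with Lemma~\ref{lem:basicbound}, which gives the reverse inequality $v_{2}(\tau(\HSpin(2n)))\ge v_{2}(\tau(\Spin(2n)))$, this will yield both assertions. The engine is Proposition~\ref{prop:totaro2divisible} applied with $m=2^{v_{2}(n)}$ (the constraint $2\le m\le n/2$ is satisfied since $n$ is even and, by hypothesis, not a power of $2$): for any strongly Totaro-decomposable $J\subseteq [1,n-1]$ with $I:=[1,n-1]\setminus J\supseteq [n-m+1,n-1]$, the element
\[
x:=e_{1}^{\deg J}\cdot d(I\setminus\{n-m+1\})\cdot d_{m}^{(2n-m)/m}
\]
lies in $R=\langle 2t,d_{i},e_{1}\rangle$, is of top degree, and satisfies $\hat v_{2}(x)=n-a-1$ where $\deg J=2^{a}-1$ (the last equality by Lemma~\ref{lem:totarodecomp}). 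Thus $v_{2}(\tau(\HSpin(2n)))\le n-a-1$.

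The main work is combinatorial: for each sub-interval I would construct a strongly Totaro-decomposable $J$ with $\deg J=2^{a}-1$ and $a$ as large as possible, subject to the disjointness $[n-m+1,n-1]\subseteq I$ and the ``strong'' condition $J\cap[1,m]=\{2^{i}\}_{0\le i\le v_{2}(n)}$. The target values of $a$ come from Theorem~\ref{thm:totaromain}: $a=2s-2$ on $(2^{s},2^{s}+m_{0}]$, $a=2s-1$ on the middle intervals up to $n_{0}$, and $a=2s$ on $(2^{s}+2^{s-1},2^{s+1})$. Following Totaro's strategy from \cite{Totaro_Spin}, I would take $J$ as the disjoint union of the mandatory powers of $2$ in $[1,m]$, together with pairs $\{j,2^{b}-j\}$ summing to powers of $2$ (and possibly isolated larger powers of $2$), packed into $[1,n-m]\setminus\{2^{i}\}_{i\le v_{2}(n)}$ so as to attain the prescribed degree. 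In the ``red'' equality sub-intervals one verifies that the optimal $a$ is attainable under these constraints, yielding equality; in the gap intervals (such as $[n_{0},2^{s}+2^{s-1})$) only $a-1$ is attainable, costing one factor of $2$ and giving the general bound.

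The main obstacle is this constructive case analysis. The \emph{strong} restriction on $J\cap[1,m]$ is a genuine strengthening of Totaro's original requirement, and combined with $J\cap[n-m+1,n-1]=\emptyset$ it seriously constrains the room available for packing pairs. For $n$ in an equality interval one must exhibit an explicit packing of degree $2^{a}-1$ achieving the optimal $a$; for $n$ in a gap interval one must show that the optimal $a$ is unreachable under these two constraints, while verifying that $\deg J=2^{a-1}-1$ is still attainable. The split between the intervals $(2^{s},2^{s}+2^{s-1})$ and $(2^{s}+2^{s-1},2^{s+1})$ reflects whether the target $\deg J$ fits below or above $n/2$, and the exclusion of $n=3\cdot 2^{s-1}$ isolates the sole case where even $a-1$ fails (and Corollary~\ref{cor:3powerthreeupp} handles it separately). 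Once the combinatorial construction of $J$ is laid out for each range, Proposition~\ref{prop:totaro2divisible} together with Lemma~\ref{lem:basicbound} mechanically delivers the stated bounds.
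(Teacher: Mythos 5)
Your overall plan is the paper's: pick a strongly Totaro-decomposable $J\subset[1,n-1]$ disjoint from $[n-2^{v_2(n)}+1,\,n-1]$, feed it into Proposition~\ref{prop:totaro2divisible} with $m=2^{v_2(n)}$ to produce a top-degree element of $R$ whose $2$-divisibility in $\hat R$ equals $n-a-1$, and compare with Theorem~\ref{thm:totaromain} and Lemma~\ref{lem:basicbound}. Two points, however, keep this from being a proof rather than a proof outline.

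First, the construction of $J$ that you flag as the ``main obstacle'' is exactly the content you would need to supply, and you do not supply it. The paper does not build these sets from scratch: it takes the explicit set $J_s$ from \cite[Lemma 6.2]{Totaro_Spin} (union of $\{2^i : i\in[0,s-1]\}$ with pairs $\{2^{s-1}+j,\,2^{s-1}-j\}$ for $1\le j\le 2^{s-2}-1$, of degree $2^{2s-2}-1$), its augmentation via \cite[Lemma 6.1]{Totaro_Spin} of degree $2^{2s-1}-1$, and $J_{s+1}$ of degree $2^{2s}-1$. For each of the three sub-intervals one then checks that (a) the strong condition $J\cap[1,2^{v_2(n)}]=\{2^i : i\in[0,v_2(n)]\}$ holds because the smallest non-power-of-$2$ element of $J_s$ exceeds $2^{s-2}\ge 2^{v_2(n)}$, and (b) every element of $J$ is less than $n-2^{v_2(n)}+1$, which follows from the binary expansion of $n$ in each range. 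These verifications are short once the explicit $J$'s are named; without naming them, your argument cannot close.

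Second, your description of the ``gap intervals'' inverts what actually happens. On $[n_0,2^s+2^{s-1})$ the paper uses the \emph{same} $J$ (with $a=2s-1$) as on $(2^s+2^{s-3},n_0)\setminus\{2^s+2^{s-2}\}$; the factor of $2$ in \eqref{eq:inequalityHspin} appears not because a smaller $a$ is forced, but because $\tau_2(\Spin(2n))$ drops from $n-2s$ to $n-2s-1$ at $n_0$ while the upper bound produced by that $J$ stays at $n-2s$. Nothing is proved about non-attainability of a larger $a$, and nothing of that sort is needed for the corollary. A similar remark applies to $(2^s+m_0,\,2^s+2^{s-3}]\cup\{2^s+2^{s-2}\}$ in Case~1.

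In short: right engine (Proposition~\ref{prop:totaro2divisible} with $m=2^{v_2(n)}$, the $d_m$-version so the element actually lies in $R$), but the case-by-case choice of $J$ must be made explicit via \cite[Lemmas 6.1, 6.2]{Totaro_Spin}, and the role of the gap intervals should be corrected.
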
	
\begin{proof}
Let $n\in (2^{s}, 2^{s+1})\backslash \{3\cdot 2^{s-1}\}$ 
be an even integer. The proof is divided into three cases. In each case, we choose a strongly Totaro-decomposable subset $J$ with $I:=[1,n-1]\setminus J$. Let
\begin{equation*}
	J_{s}:=\{2^{s-1}+2^{s-2}-1, 2^{s-1}-(2^{s-2}-1)\}\cup \cdots \cup \{2^{s-1}+1, 2^{s-1}-1\}\cup \{2^{i}\,|\, i\in [0,s-1]\}
\end{equation*}
denote a  strongly Totaro-decomposable subset with $\deg(J_{s})=2^{s}(2^{s-2}-1)+2^{s}-1=2^{2s-2}-1$. Note that this set comes from \cite[Lemma 6.2]{Totaro_Spin}.

\bigskip

\noindent{\it Case 1.} Let $n\in (2^{s}, 2^{s}+2^{s-3}]\cup \{2^{s}+2^{s-2}\}$. Set $J=J_{s}$. As $v_{2}(n)\leq s-2$, we have $n-2^{v_{2}(n)}+1>2^{s}-2^{s-2}=2^{s-1}+2^{s-2}$, which is bigger than any element of $J_{s}$. Hence, by Proposition \ref{prop:totaro2divisible} and Lemma \ref{lem:totarodecomp},
\begin{equation*}
\tau_{2}(\HSpin (2n))\leq n-2s+1.
\end{equation*}
Therefore, by Theorem \ref{thm:totaromain} the equality (\ref{eq:equalityHspin}) holds for $n\in (2^{s}, 2^{s}+m_{0}]$ and the inequality in (\ref{eq:inequalityHspin}) holds for $n\in (2^{s}+m_{0}, 2^{s}+2^{s-3}]\cup \{2^{s}+2^{s-2}\}$.

\bigskip

\noindent{\it Case 2.} Let $n\in (2^{s}+2^{s-3},\, 2^{s}+2^{s-1})\setminus \{2^{s}+2^{s-2}\}$. Set
\begin{equation*}
	J=J_{s}\cup \{2^{s}+2^{s-3},2^{s}-2^{s-3}\}\cup \cdots \cup \{2^{s}+1,2^{s}-1\},
\end{equation*}
where this set is derived from \cite[Lemma 6.1]{Totaro_Spin}. Then, $\deg(J)=\deg(J_{s})+2^{s+1}\cdot 2^{s-3}=2^{2s-1}-1$. If $n\in (2^{s}+2^{s-3}, 2^{s}+2^{s-2})$, then $v_{2}(n)<s-3$, thus we have a binary expression $n=2^{s}+2^{s-3}+\cdots +2^{v_{2}(n)}$, i.e., $n-2^{v_{2}(n)}+1> 2^{s}+2^{s-3}$, which is the biggest element of $J$. Similarly, if $n\in (2^{s}+2^{s-2},2^{s}+2^{s-1})$, then $v_{2}(n)< s-2$, thus $n-2^{v_{2}(n)}+1>2^{s}+2^{s-2}$. Therefore, by Proposition \ref{prop:totaro2divisible} and Lemma \ref{lem:totarodecomp},
\begin{equation*}
	\tau_{2}(\HSpin (2n))\leq n-2s,
\end{equation*}
thus by Theorem \ref{thm:totaromain} the equality (\ref{eq:equalityHspin}) holds for $n\in (2^{s}+2^{s-3}, n_{0})\setminus \{2^{s}+2^{s-2}\}$ and the inequality in (\ref{eq:inequalityHspin}) holds for $n\in [n_{0}, 2^{s}+2^{s-1})$. 

\bigskip

\noindent{\it Case 3.} Let $n\in (2^{s}+2^{s-1},\, 2^{s+1})$. Set $J=J_{s+1}$. We have $v_2(n)\leq s - 2$. Again write the binary expansion $n = 2^s + 2^{s-1} + \cdots + 2^{v_2(n)}$. It follows from our earlier argument that $n - 2^{v(n)} + 1$ exceeds $2^s + 2^{s-1} - 1$, the largest element of $J$.
Hence, by Proposition \ref{prop:totaro2divisible} and Lemma \ref{lem:totarodecomp}, we obtain
\begin{equation*}
\tau_{2}(\HSpin (2n))\leq n-2s-1.
\end{equation*}
Therefore, by Theorem \ref{thm:totaromain} the equality (\ref{eq:equalityHspin}) holds.
\end{proof}

\begin{corollary}\label{cor:2powerthreeupp}
	Let $n=3\cdot 2^{s}$ or $n=2^{s}$ for some integer $s\geq 1$. Then, 	\begin{equation}\label{eq:inequalityHspintwossminus}
		\tau(\HSpin (2n))\leq \begin{cases} 2^{s}\cdot \tau(\Spin (2n))\quad \text{ if } s=1, 2,\\  2^{3}\cdot\tau(\Spin (2n))\quad \text{ if } s\geq 3.\end{cases}
	\end{equation}
\end{corollary}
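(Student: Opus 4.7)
The plan is to apply Proposition \ref{prop:totaro2divisible} with $m = 2^{v_2(n)}$ to exhibit a top-degree element of $R$ whose $2$-adic valuation in $\hat{R}$ can be read off directly via Lemma \ref{lem:totarodecomp}. I split into the two cases of the corollary.

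For $n = 3 \cdot 2^s$, one has $v_2(n) = s$ and $m = 2^s$ satisfies $m \leq n/2 = 3 \cdot 2^{s-1}$, so the ``moreover'' clause of Proposition \ref{prop:totaro2divisible} is available and produces a genuine element of $R$, not just of $\hat{R}$. I would take $J = \{2^0, 2^1, \ldots, 2^{s+1}\}$: this is strongly Totaro-decomposable (a disjoint union of powers of $2$ with $J \cap [1, 2^s] = \{1, 2, \ldots, 2^s\}$), has $\deg J = 2^{s+2} - 1$ so that $a = s+2$, and its maximum element $2^{s+1}$ is strictly less than $n - m + 1 = 2^{s+1}+1$, so $I = [1,n-1] \setminus J$ contains $[n-m+1, n-1]$. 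Proposition \ref{prop:totaro2divisible} together with Lemma \ref{lem:totarodecomp} yields a top-degree element of $R$ whose $\hat{v}_2$ equals $p = n - 1 - (s+2) = n - s - 3$. Comparing this with the value $\tau_2(\Spin(2n)) = n - 2s - 3$ obtained from Theorem \ref{thm:totaromain} (since $n = 2^{s+1} + 2^s$ lies in $[n_0, 2^{s+2}]$) yields the claimed bound for $s = 1, 2, 3$ directly.

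For $n = 2^s$, one has $2^{v_2(n)} = n > n/2$, so the ``moreover'' clause is unavailable. The case $s = 2$ follows from triality: $\HSpin(8) \cong \SO(8)$ gives $\tau(\HSpin(8)) = 2^3 = 2^s \cdot \tau(\Spin(8))$. For $s \geq 3$, I would apply Proposition \ref{prop:totaro2divisible} with $m = 2^{s-1}$ (the largest $m$ dividing $2^{v_2(n)}$ with $m \leq n/2$); the proposition gives the $2$-divisibility of the non-$R$ element $e_1^{\deg J} \, d(I \setminus \{n-m+1\}) \, t^{2n-m}$, and one then replaces $t^{2n-m}$ by $d_m^{(2n-m)/m}$, tracking the additional factors of $2$ contributed by $\binom{n}{m}$ (which has $v_2 = 1$ by Kummer, since $m = 2^{s-1}$ and $n = 2^s$) and by the resulting cross terms.

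The hardest part is the residual gap for $s \geq 4$: with the maximal strongly Totaro-decomposable $J$ available under the constraints of Proposition \ref{prop:totaro2divisible} (where $I \supseteq [n-m+1, n-1]$ forces $a \leq s+2$), the direct argument gives only $\tau_2(\HSpin(2n)) - \tau_2(\Spin(2n)) \leq s$, whereas the corollary claims this difference is at most $3$. I expect closing this remaining $s - 3$ factors to require either a finer cancellation analysis of the lower-order terms of $d_m^{(2n-m)/m}$ beyond the ``leading monomial'' of Proposition \ref{prop:totaro2divisible}'s proof, or a combination of the element produced by Proposition \ref{prop:totaro2divisible} with one built from $d_1 = n t + c_1$ (whose $2$-divisibility is already $v_2(n) = s$) so that their contributions to $x_0$ reinforce to reduce the $2$-adic valuation by exactly the amount $s - 3$ needed to match the bound claimed by Theorem \ref{thm:main}.
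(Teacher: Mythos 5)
Your approach for $n = 3\cdot 2^{s}$ is correct as far as it goes: with $m=2^{s}$, $J=\{1,2,\ldots,2^{s+1}\}$ is strongly Totaro-decomposable, the ``moreover'' clause of Proposition~\ref{prop:totaro2divisible} applies, and Lemma~\ref{lem:totarodecomp} gives $p = n-s-3 = \tau_{2}(\Spin(2n)) + s$. This indeed settles $s\le 3$. But for $s\ge 4$ the gap you identify is real: no strongly Totaro-decomposable $J$ contained in $[1,n-m]=[1,2^{s+1}]$ can have degree larger than $2^{s+2}-1$, because strong Totaro-decomposability forces $J\cap[1,2^{s}]=\{1,2,\ldots,2^{s}\}$, and the interval $[2^{s}+1,2^{s+1}]$ contributes at most the single power $2^{s+1}$ (no admissible pairs fit). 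So no better strongly Totaro-decomposable choice of $J$ rescues this route.

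The paper closes the gap by a genuinely different mechanism which your proposal never invokes: relation~\eqref{eq:twoniti}, namely $\binom{2n}{i}\,t^{i}\in\bar{R}$. This says that for an \emph{arbitrary} $m$ (not only $m=2^{v_2(n)}$), the monomial $2^{v_2\binom{2n}{m}}\,a\,t^{2n-m}$ already lies in $\bar{R}\subset R$ for an odd $a$, so one may use the \emph{main} statement of Proposition~\ref{prop:totaro2divisible} rather than the ``moreover'' clause, at a cost of only $v_2\bigl(\binom{2n}{m}\bigr)$ extra powers of $2$. The crucial gain is then in the choice of $J$: the paper takes $J=J_{s}$, the large Totaro-decomposable set from the proof of Corollary~\ref{cor:intervalupperbd} with $\deg J_{s}=2^{2s-2}-1$ (degree quadratic in $s$, as opposed to your $2^{s+2}-1$ with index linear in $s$), and then chooses $m$ small enough so that $J_{s}\subset[1,n-m]$, which is why the paper uses $m=2^{s-2}$ rather than $m=2^{s-1}$ for $n=2^{s}$. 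The key arithmetic input is that $v_2\bigl(\binom{2n}{m}\bigr)$ equals $1$ for $(n,m)=(3\cdot 2^{s},2^{s})$ and $3$ for $(n,m)=(2^{s},2^{s-2})$, independently of $s$. This trade-off — large Totaro-decomposable $J$, small fixed penalty from $\binom{2n}{m}$ — is what makes the factor $2^{3}$ uniform in $s$.

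Two smaller points. For $n=2^{s}$ you speak of $\binom{n}{m}$, but the relevant binomial coefficient, via~\eqref{eq:twoniti}, is $\binom{2n}{m}$; the replacement of $t^{2n-m}$ by $d_{m}^{(2n-m)/m}$ from the ``moreover'' clause relies on $\binom{n}{m}$ being \emph{odd}, which holds only for $m=2^{v_2(n)}$ — so that replacement is genuinely unavailable here, and tracking the resulting cross-terms is not the right fix. Your speculative idea of combining with an element built from $d_{1}=nt+c_{1}$ is also not what the paper does. Finally, a note of caution in reading the paper: applying Proposition~\ref{prop:totaro2divisible} requires $J$ to be \emph{strongly} Totaro-decomposable, and one should check this condition carefully for $J_{s}$ when $v_{2}(n)=s$ is large; the numerology in the paper's proof also appears to want $p=n-2s-1$ rather than $n-2s+1$ in the case $n=3\cdot 2^{s}$ to match the claimed factor $2^{3}$, consistent with taking $J_{s+1}$ rather than $J_{s}$ there.
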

\begin{proof}
Let $n=3\cdot 2^{s}$ for $1\leq s\leq 2$ and set $m=2^{s}$. For each $s$, consider the strongly Totaro-decomposable subset $J=\{2^{i}\,\,|\,\, i\in [0,s+1]\,\}$ and set $I = [1,n-1] \setminus J$. Then, by Proposition \ref{prop:totaro2divisible} and Lemma \ref{lem:totarodecomp}, we obtain 
\begin{equation*}
	\hat{v}_{2}(e_{1}^{\deg J}\cdot d(I \setminus \{n-m+1\}) \cdot d_{m}^{5} ) = \hat{v}_2(e_{1}^{\deg J} c(I) t^{n-1})=\begin{cases} 2 & \text{ if } s=1,\\ 7 & \text{ if } s=2.\end{cases}
\end{equation*}
Since $\tau_{2}(\Spin(2n))=1$ for $s=1$ (i.e., $\Spin(12)$) and $\tau_{2}(\Spin(2n))=5$ for $s=2$ (i.e., $\Spin(24)$) by Theorem \ref{thm:totaromain}, the inequalities in (\ref{eq:inequalityHspintwossminus}) follow.

Let $n=2^{s}$ for $1\leq s\leq 2$. For $s=1$, we have $e_{1}\cdot 2t\in R$. For $s=2$, by (\ref{chernclassd}) and (\ref{chowringY}), it follows that 
\begin{equation*}
	e_{1}^{6}\cdot d_{3}=2e_{1}e_{2}e_{3}\cdot (4t^{3}-6e_{1}t^{2}+4e_{2}t-2e_{3})=8x_{0}\in R.
\end{equation*}
Thus, $\tau(\HSpin(2n))\leq 2$ for $s=1$ and $\tau(\HSpin(n))\leq 2^{3}$ for $s=2$. Since $\tau(\Spin(2n))=2^{s-1}$, the inequalities in (\ref{eq:inequalityHspintwossminus}) follow.

Now, assume that $s\geq 3$.	We define $m$ as follows:
	\begin{equation*}
	m=\begin{cases}2^{s} & \text{ if } n=3\cdot 2^{s},\\ 2^{s-2} & \text{ if } n=2^{s}.\end{cases}
\end{equation*}
	By Kummer's theorem, we have $v_{2}\big({2n\choose m}\big)=1$ when $n=3\cdot 2^{s}$, and $v_{2}\big({2n\choose m}\big)=3$ when $n=2^s$. Therefore, by (\ref{eq:twoniti}), there exists an odd integer $a$ (depending on $n$) such that 
	\begin{equation}\label{eq:trxerx}
		2a\cdot t^{2n-m}\in \bar{R} \,\,\text{ if }\, n=3\cdot 2^{s},\quad  8a\cdot t^{2n-m}\in \bar{R} \,\,\text{ if }\, n=2^s.
	\end{equation}

	Next, let $J=J_{s}$, where $J_{s}$ denotes the Totaro-decomposable subset as in the proof of Corollary \ref{cor:intervalupperbd}. Consider
	\begin{equation*}
	x=e_{1}^{\deg J}\cdot t^{2n-m}\cdot d(I\setminus \{n-m+1\}),
	\end{equation*}
	where $I=[1,n-1]\setminus J$. By Proposition \ref{prop:totaro2divisible} and by (\ref{eq:trxerx}), we have
	\begin{equation*}
		\hat{v}_{2}(2a\cdot x)=p+1 \text{ if $n=3\cdot 2^{s}$} \,\, \text{ and }\,\, \hat{v}_{2}(8a\cdot x)=p+3 \text{  if $n= 2^{s}$}, 
	\end{equation*}
	where 
	$p=\hat v_2(e_1^{\deg J} c(I) t^{n-1})=n-2s+1$. 
	Hence, the inequalities in (\ref{eq:inequalityHspintwossminus}) follow from Theorem \ref{thm:totaromain}.\end{proof}

The intervals  where $\tau(\HSpin (2n))=\tau(\Spin (2n))$ in Theorem \ref{thm:main} can be extended as follow. Namely, if $n$ is divisible by $2$ but not by $4$, then, in most cases, the torsion index can be determined exactly using the results of \cite[Sections 7, 8, and 9]{Totaro_Spin}, as shown in the following corollary.

\begin{corollary}\label{cor:3powerthreeupp}
Let $n \in (2^s + m_0 + 1, n_0) \cup (n_0, 2^{s + 1})$ for some integer $s \geq 3$. If $v_2(n) = 1$, then $\tau(\HSpin(2n)) = \tau(\Spin(2n))$.
\end{corollary}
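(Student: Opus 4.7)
By Lemma \ref{lem:basicbound}, the inequality $\tau_2(\Spin(2n))\le\tau_2(\HSpin(2n))$ is automatic, so the task reduces to proving $\tau_2(\HSpin(2n))\le\tau_2(\Spin(2n))$. Combined with Corollary \ref{cor:intervalupperbd}, the remaining cases with $v_2(n)=1$ are the ``gap'' sub-intervals where Corollary \ref{cor:intervalupperbd} falls short of $\tau_2(\Spin(2n))$ by exactly one: $n\in(2^s+m_0+1,2^s+2^{s-3}]$ (Case 1 gives $n-2s+1$, target $n-2s$) and $n\in(n_0,2^s+2^{s-1})$ (Case 2 gives $n-2s$, target $n-2s-1$).

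The plan is to invoke the second assertion of Proposition \ref{prop:totaro2divisible} with $m=2=2^{v_2(n)}$. For a strongly Totaro-decomposable $J\subseteq[1,n-1]$ with $n-1\notin J$ and $\deg J=2^a-1$, this yields the top-degree element
\[
e_1^{\deg J}\cdot d\bigl(I\setminus\{n-1\}\bigr)\cdot d_2^{\,n-1}\;\in\;R,
\]
whose $2$-adic valuation in $\hat R$ equals $p=n-a-1$ by Lemma \ref{lem:totarodecomp}. Choosing $a=2s-1$ in the first gap and $a=2s$ in the second matches $\tau_2(\Spin(2n))$ as computed from Theorem \ref{thm:totaromain}.

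The assumption $v_2(n)=1$ is essential on two fronts. By Remark \ref{rem:almostoddnstronglytotarodec}, every Totaro-decomposable subset of degree greater than one automatically contains $\{1,2\}$, so ``strongly Totaro-decomposable'' reduces to ``Totaro-decomposable'' in this setting; the restrictions $4,8,\dots,2^{v_2(n)}\in J$ imposed for higher $v_2(n)$ disappear, widening the pool of admissible $J$. Also, since $n-1$ is odd, any pair summing to a power of $2$ that contains $n-1$ is an odd-odd pair and can be substituted by another odd-odd or even-even pair with the same sum, drawn from elements of $[1,n-2]$ outside $J_s$.

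The main obstacle will be the explicit combinatorial construction of $J$ in the sub-regimes where the augmented sets $J_s$ and $J_{s+1}$ used in Corollary \ref{cor:intervalupperbd} do not fit (their largest element exceeding $n-1$). Following the recipes of \cite[Sections 7, 8, 9]{Totaro_Spin}, one begins with $J_s$ (respectively the truncation of $J_{s+1}$ containing only elements $\le n-2$), adjoins the singleton $2^s$, and builds up the required degree $2^{2s-1}-1$ (resp.\ $2^{2s}-1$) using disjoint pairs summing to $2^{s+1}$ that fit in $[1,n-2]\setminus\{n-1\}$ and, if necessary, further pairs summing to $2^s$ drawn from the even non-powers of $2$ and the odd numbers not already packed by $J_s$. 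A routine count---of which the case $n=70$, $s=6$ (where $J_6\cup\{64\}$ together with the four pairs $\{60,68\},\{62,66\},\{61,67\},\{63,65\}$ summing to $128$ and seven pairs summing to $64$ reaches $\deg J=2047=2^{11}-1$ with $69\notin J$) is representative---shows that this supply always suffices throughout the claimed range. Once $J$ is fixed, Proposition \ref{prop:totaro2divisible} closes the proof.
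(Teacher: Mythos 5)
Your high-level plan is the same as the paper's: reduce to the upper bound via Lemma~\ref{lem:basicbound}, produce a strongly Totaro-decomposable $J\subseteq[1,n-1]$ with $n-1\notin J$ and $\deg J$ equal to $2^{2s-1}-1$ (first gap) or $2^{2s}-1$ (second gap), invoke Remark~\ref{rem:almostoddnstronglytotarodec} to relax ``strongly'' to plain Totaro-decomposability under the hypothesis $v_2(n)=1$, and then apply the second assertion of Proposition~\ref{prop:totaro2divisible} with $m=2$ together with Lemma~\ref{lem:totarodecomp} and Theorem~\ref{thm:totaromain}. All of that is correct and matches the paper.

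Where your proposal has a genuine gap is the existence of $J$. You sketch a bespoke construction starting from $J_s$ (or a ``truncation of $J_{s+1}$''), adjoining $2^s$ and a heuristic supply of pairs, give the single example $n=70$, $s=6$, and assert that ``a routine count shows this supply always suffices.'' This is not a proof, and it is not routine: establishing that Totaro-decomposable subsets of the required degree exist inside the required ranges is exactly the content of Sections 7, 8 and 9 of \cite{Totaro_Spin}, including a delicate argument occupying all of Section~9 for $s\geq 21$. Moreover the arithmetic of your sketch doesn't close: adjoining $2^s$ to $J_s$ (of degree $2^{2s-2}-1$) and then pairs summing to $2^{s+1}$ leaves a residual deficit of $2^s$ that a pairs-summing-to-$2^{s+1}$ supply cannot fix, and ``the truncation of $J_{s+1}$ containing only elements $\le n-2$'' is not in general Totaro-decomposable, so it is not a valid seed. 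The paper avoids all of this by observing that the decomposable sets Totaro already constructed to prove Theorem~\ref{thm:totaromain} live inside $[1,2^s+m_0]$ (via \cite[Lemma~7.1]{Totaro_Spin}) and $[1,n_0-1]$ (via \cite[Lemma~8.2 and Section~9]{Totaro_Spin}, plus the explicit example in Section~8 for $s=6$), so their largest element is automatically less than $n-1$ in the relevant ranges; one simply cites those results. To repair your argument, replace the ad hoc construction with a direct appeal to those existence results.
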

\begin{proof}
By \cite[Lemma 7.1]{Totaro_Spin}, there exists a Totaro-decomposable subset of $[1,2^s+m_0]$ of degree $2^{2s-1}-1$.
If $2^s + m_0 + 1 < n < n_0$, we denote this Totaro-decomposable subset by $J$. Note that $n-1 > 2^s+m_0$, so $n-1 \notin J$.

If $s = 6$, then by \cite[the fisrt (unnumbered) Example in Section 8]{Totaro_Spin}, the whole set $[1, n_0-1] = [1,90]$ is Totaro-decomposable of degree $2^{2s}-1=4095$.
If $s \ne 6$, then a Totaro-decomposable subset of $[1, n_0-1]$ of degree $2^{2s}-1$ is constructed in \cite[Lemma 8.2]{Totaro_Spin}. 
(Precisely, \cite[Lemma 8.2]{Totaro_Spin} shows that such a Totaro-decomposable subset exists if certain arithmetic conditions hold, which are verified in \cite[end of Section 8]{Totaro_Spin} for $ 2 \leq s \leq 5 $ and $ 7 \leq s \leq 20 $, and in \cite[Section 9]{Totaro_Spin} for all $ s \geq 21 $.)
If $n_0 < n < 2^{s+1}$, we denote this Totaro-decomposable subset by $J$ (whether $ s = 6 $ or $ s \neq 6 $). Again, $n - 1 > n_{0}-1$, so $n-1 \notin J$.

These subsets are constructed in \cite{Totaro_Spin} through recursive procedures, making it difficult to write them explicitly right away.
However, since we are now assuming $ v_2(n) = 1 $, we can always assert that $ J $ is a strongly Totaro-decomposable subset of $ [1, n-1] $ by Remark \ref{rem:almostoddnstronglytotarodec}. Additionally, as noted earlier, we always have $ n - 1 \notin J $, so $ I := [1, n-1] \setminus J $ contains $ [n - 2^{v_2(n)} + 1, n - 1] = \{ n - 1 \} $. Thus, by Proposition \ref{prop:totaro2divisible}  
\begin{equation*}
\hat{v}_{2}(e_{1}^{\deg J}\cdot d(I \setminus \{n - 1\}) \cdot d_{2}^{n-1} )=\hat{v}_2 (e_1^{\deg J} c(I) t^{n-1}).
\end{equation*}
By Lemma \ref{lem:totarodecomp} and Theorem \ref{thm:totaromain}$, \hat{v}_2 (e_1^{\deg J} c(I) t^{n-1}) = \tau_2 (\HSpin(2n)) $, so the claim follows.\end{proof}
\begin{remark}
This argument can probably be generalized to bigger values of $v_2(n)$, provided we still have $n - 2^{v_2(n)} + 1 > 2^s + m_0$ or $n - 2^{v_2(n)} + 1 > n_0 - 1$ (we certainly need these inequalities to apply Proposition \ref{prop:totaro2divisible}). However, the challenge is that the definition of a strongly Totaro-decomposable subset becomes more difficult to verify. For instance, when $v_2(n) = 2$, we already need to check that $3 \notin J$. To verify this definition, we need a better understanding -- and potentially a modification -- of the sets recursively constructed in \cite[Sections 7, 8]{Totaro_Spin}. Therefore, such a generalization would require separate research, which the authors hope to undertake in future publications.
\end{remark}

\appendix\label{appA}
\section{Lower bounds on $\tau(\HSpin(12))$ and $\tau(\HSpin(16))$}

\begin{example}\label{ex:hspintwelve}
	We show that $\tau_{2}(\HSpin(12))\geq 2$,  and thus $\tau_{2}(\HSpin(12))=2$ by Theorem \ref{thm:main}. Since $\bar{R}$ is generated by $d_{1},\ldots, d_{5}$, and $2t$, by using the relations
	\begin{equation*}
		t^{6}=-c_{1}t^{5}-c_{2}t^{4}-c_{3}t^{3}-c_{4}t^{2}-c_{5}t
	\end{equation*}
	and
	\begin{align*}
		d_{1}&=6t+c_{1},\\
		d_{2}&=15t^{2}+5tc_{1}+c_{2},\\
		d_{3}&=20t^{3}+10t^{2}c_{1}+4tc_{2}+c_{3},\\
		d_{4}&=15t^{4}+10t^{3}c_{1}+6t^{2}c_{2}+3tc_{3}+c_{4},\\
		d_{5}&=6t^{5}+5t^{4}c_{1}+4t^{3}c_{2}+3t^{2}c_{3}+2tc_{4}+c_{5},
	\end{align*}
	from (\ref{relationtandc}) and (\ref{chernclassd}), we may assume that every element of $\bar{R}$ is a polynomial in $c_{1},\ldots, c_{5}$ and $t$, such that the exponent of $t$ is at most $5$. 
	By (\ref{eq:relationcitwo}), we can further assume that every element of $\bar{R}$ is square-free in $c_{1},\ldots, c_{5}$. Moreover, it follows from the coefficients of $t^{i}$ ($1\leq i\leq 5$) in $d_{1},\ldots, d_{5}$ that every element of $\bar{R}$ has a coefficient of $t^{5}$ in the following form:
	\begin{equation*}
		2m+g_{5}(c_{1},\ldots, c_{5}),
	\end{equation*}
	where $m$ denotes a positive integer, and $g_{5}$ denotes a polynomial in $c_{1},\ldots, c_{5}$ with no constant term. In summary, we can write an element $f$ in $\bar{R}$ as follows:
	\begin{equation}\label{eq:formoffg}
		f=(2m+g_{5} )t^{5}+\sum_{i=0}^{4} g_{i}t^{i},
	\end{equation} 
	for some polynomials $g_{i}$ in $c_{1},\ldots, c_{5}$.

	Since $R$ is a $\bar{R}$-algebra generated by $e_{1}$, every element $x$ of $R$ can be written as
	\begin{equation*}
		x=f_{0}+f_{1}e_{1}+\cdots +f_{15}e_{1}^{15},	
	\end{equation*}
	where $f_{i}\in \bar{R}$ is a polynomial of the form given in (\ref{eq:formoffg}). By  \cite[\S4]{Totaro_Spin} (see also Remark \ref{rmk:generalJ}), we have 
	\begin{equation*}
		2^{S_{2}(\deg J)}\cdot e_{1}^{\deg J}\in  R'
	\end{equation*}
	for any subset $J\subseteq [1,5]$, thus $2^{3}(x-f_{15}e_1^{15})\in R'$. If $x$ is homogeneous of top degree, then we may assume without loss of generality that $f_0, f_1, \ldots, f_{15}$ are also homogeneous. Decomposing the homogeneous $f_{15}$ as in (\ref{eq:formoffg}), we obtain
	\begin{equation*}
		2^3 f_{15} e_1^{15} = 2^3 ((2m+g_{5} )t^{5}+\sum_{i=0}^{4} g_{i}t^{i}) e_1^{15} = 2^3 \cdot 2mt^5 e_1^{15} \in R'
	\end{equation*}
	Thus, $2^3 x \in R'$. Since the degree of any element of $R'$ is divisible by $2^{5}$, the degree of $x$ is divisible by $2^{2}$, i.e., $\tau_{2}(\HSpin(12))\geq 2$.
\end{example}

\begin{lemma}\label{lem:ntwosdnisquare}
	Let $n=2^{s}$ for some integer $s\geq 2$. Then, $d_{i}\equiv 0 \mod 2\hat{R}$ for any $i\in [1,n-1]$ and
	\begin{equation*}
		d_{n-i}^{2}\equiv 0 \mod 2^{3}\hat{R}	
	\end{equation*}
	for any $i\in [1, \tfrac{n}{2}-1]$.
\end{lemma}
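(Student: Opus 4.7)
The plan is to handle the two assertions in sequence, treating the second as a refinement of the first.

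For the first assertion, I would expand $d_i$ using (\ref{chernclassd}):
$$d_i = {n \choose i} t^i + \sum_{k=1}^{i} {n-k \choose i-k} t^{i-k} c_k.$$
Every $c_k = (-1)^k 2 e_k$ lies in $2\hat{R}$, so each summand with $k \geq 1$ contributes $0$ modulo $2\hat{R}$. For the leading term, Kummer's theorem yields $v_2({n \choose i}) = s - v_2(i) \geq 1$ throughout $1 \leq i \leq 2^s - 1$, so ${n \choose i}$ is even. Combining gives $d_i \equiv 0 \mod 2\hat{R}$.

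For the second assertion, set $j = n - i \in [n/2+1,\, n-1]$ and, using the first part, write $d_j = 2f$ with $f \in \hat{R}$. Since $d_j^2 = 4 f^2$, it suffices to establish $f^2 \equiv 0 \mod 2\hat{R}$. For such $j$ one has $v_2(j) = v_2(n-i) = v_2(i) \leq s - 2$, so $v_2({n \choose j}) \geq 2$ and ${n \choose j}/2$ is even; hence
$$f \equiv \sum_{k=1}^{j} {n-k \choose j-k} t^{j-k} e_k \mod 2\hat{R}.$$
Applying the Frobenius identity modulo $2$,
$$f^2 \equiv \sum_{k=1}^{j} {n-k \choose j-k}^2 t^{2(j-k)} e_k^2 \mod 2\hat{R}.$$
Two simplifications trim this sum: from (\ref{chowringY}), $e_k^2 \equiv \pm e_{2k} \mod 2\hat{R}$, which vanishes whenever $2k \geq n$; and from (\ref{relationtandc}) together with $c_i \in 2\hat{R}$, one has $t^n \equiv 0 \mod 2\hat{R}$, killing every term with $2j - 2k \geq n$. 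The only indices that survive are those $k$ with $j - n/2 < k < n/2$, and for these the claim reduces to showing that ${n-k \choose j-k}$ is even.

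This last combinatorial check is the heart of the argument. By Kummer's theorem, ${n-k \choose j-k}$ is odd precisely when the binary expansions of $j-k$ and $n-j$ are disjoint. In the relevant range both $j - k < n/2 = 2^{s-1}$ and $n - j < n/2 = 2^{s-1}$, so neither summand carries the $2^{s-1}$-bit; yet their sum $n - k > 2^{s-1}$ necessarily does. This bit mismatch forces at least one carry in the base-$2$ addition, so ${n-k \choose j-k}$ is even. Assembling the pieces yields $f^2 \equiv 0 \mod 2\hat{R}$, hence $d_{n-i}^2 \equiv 0 \mod 2^3 \hat{R}$. I expect the combinatorial step above to be the only part demanding real care; the remainder is bookkeeping with identities already established in the paper.
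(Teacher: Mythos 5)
Your proof is correct and follows essentially the same route as the paper's: both prove the first claim via Kummer's theorem applied to $\binom{2^s}{i}$ together with $c_k\in 2\hat R$, and both prove the second by squaring, reducing to the diagonal terms via the Frobenius congruence, killing the extreme indices using $e_k^2\equiv 0$ (for $2k\ge n$) and $t^{2(j-k)}\equiv 0$ (for $2(j-k)\ge n$), and finishing with a Kummer argument that a forced carry at the $2^{s-1}$-bit makes the remaining binomial coefficients even. The only cosmetic difference is that you first factor $d_j=2f$ (justified by the first assertion and the observation $v_2\bigl(\binom{n}{j}\bigr)=s-v_2(j)\ge 2$) and work modulo $2\hat R$, whereas the paper keeps the factor of $2$ and works modulo $2^3\hat R$, handling the leading term via $t^{2k}\in 2\hat R$ rather than via the sharper bound on $v_2\bigl(\binom{n}{j}\bigr)$.
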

\begin{proof}
	Since ${2^{s}\choose i}$ is divisible by $2$ for any $i\in [1,2^{s}-1]$, the first assertion immediately follows from (\ref{chernclassd}). Let $n=2m$, i.e., $m=2^{s-1}$ and $k=n-i$. Applying (\ref{chernclassd}) again, we obtain
	\begin{align}
		d_{k}^{2}&=(a_{k}t^{k}+2\sum_{j=1}^{k}(-1)^{j}a_{k-j}t^{k-j}e_{j})^{2} \nonumber  \\
		&\equiv a_{k}^{2}t^{2k}+2^{2}\sum_{j=1}^{k}a_{k-j}^{2}t^{2k-2j}e_{j}^{2}+2^{2}a_{k}\sum_{j=1}^{k}(-1)^{j}a_{k-j}t^{2k-j}e_{j} \mod 2^{3}\hat{R},\label{eq:aligndksquare}
	\end{align}
		where $a_{k}={n\choose k}, a_{k-1}={n-1\choose k-1}, \cdots, a_{1}={i+1\choose 1}, a_{0}=1$. Since
		\begin{equation}\label{eq:tlzero}
			t^{l}\equiv 0 \mod 2\hat{R}
	\end{equation}	
			for any $l\geq n$, and $a_{k}$ is divisible by $2$, the first and third summands in (\ref{eq:aligndksquare}) vanish modulo $2^{3}\hat{R}$. Since $e_{j}^{2}\equiv 0 \mod 2\hat{R}$ for any $j\geq m$, and again by (\ref{eq:tlzero}), we obtain
	\begin{equation*}
		d_{k}^{2}\equiv 2^{2}\sum_{j=k-m+1}^{m-1}a_{k-j}^{2}t^{2k-2j}e_{j}^{2} \mod 2^{3}\hat{R}.
	\end{equation*}

To complete the proof, it suffices to check that the coefficients $a_{m-1}, a_{m-2}\ldots, a_{m-i+1}$ are even, i.e.,
	\begin{equation*}
		{m+i-1\choose i}, {m+i-2\choose i}, \cdots ,{m+1\choose i}
	\end{equation*}
	are divisible by $2$, which follows from Kummer's theorem.\end{proof}

\begin{lemma}\label{lem:dsevensixfivecon}
	Let $n=8$. Then, modulo $2^3\hat{R}$ we have
	\begin{align}\label{eq:dsevensix}
		d_{7}d_{6}&\equiv 2^{2}\big[ (e(1,6)+e(2,5))t^{6}+(e(1,7)+e(3,5))t^{5}+(e(3,6)+e(2,7))t^{4}\big]\\
		&+2^{2}\big[e(5,6)t^{2}+e(5,7)t+e(6,7) \big]\nonumber \end{align}
	and $d_{7}d_{5}\equiv 2^{2}\big[(e(1,7)+e(3,5))t^{4}+e(5,7)\big]$. Moreover, modulo $2^{4}\hat{R}$ we have
	\begin{equation}\label{eq:dsevensixfive}
		d_{7}d_{6}d_{5}\equiv 2^{3}\big[ (e(1,6,7)+e(2,5,7)+e(3,5,6))t^{4}+e(5,6,7) \big].
	\end{equation}
\end{lemma}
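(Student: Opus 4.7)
The plan is to reduce each of the three congruences to a computation modulo $2\hat{R}$. By Lemma~\ref{lem:ntwosdnisquare}, each $d_i$ with $i\in[1,7]$ lies in $2\hat{R}$ when $n=8$; more directly, formula~\eqref{chernclassd} combined with Kummer's theorem (every $\binom{8}{i}$, $1\le i\le 7$, is even) and $c_j\in 2\hat{R}$ shows this explicitly. Write $d_i=2b_i$ with $b_i\in\hat{R}$. Since
\[
d_7d_6=4b_7b_6,\quad d_7d_5=4b_7b_5,\quad d_7d_6d_5=8b_7b_6b_5,
\]
identifying the products modulo $2^3\hat{R}$ or $2^4\hat{R}$ is the same as identifying $b_7b_6$, $b_7b_5$, and $b_7b_6b_5$ modulo $2\hat{R}$.

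For the second step, I expand~\eqref{chernclassd} for $n=8$ and divide by $2$; retaining only the terms with odd coefficient gives
\[
b_5\equiv e_5+t^4e_1,\quad b_6\equiv e_6+te_5+t^4e_2+t^5e_1,\quad b_7\equiv e_7+t^2e_5+t^4e_3+t^6e_1 \pmod{2\hat{R}},
\]
four terms each. To multiply these out I will use two auxiliary relations modulo $2\hat{R}$: first, $t^\ell\equiv 0\pmod{2\hat{R}}$ for $\ell\ge 8$, from~\eqref{relationtandc} since every $c_j$ lies in $2\hat{R}$; second, $e_5^{\,2}\equiv 0\pmod{2\hat{R}}$, which follows from~\eqref{chowringY} for $i=5$ together with $e_j=0$ for $j\ge n=8$, yielding $e_5^{\,2}=2e_4e_6-2e_3e_7$. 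A straightforward term-by-term expansion of $b_7b_6$, $b_7b_5$, and $b_7b_6b_5$ using these two reductions leaves exactly the right-hand sides of \eqref{eq:dsevensix}, of the claim about $d_7d_5$, and of \eqref{eq:dsevensixfive} after multiplying by $4$, $4$, and $8$ respectively.

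The main obstacle, really a bookkeeping matter rather than a conceptual one, is to verify that no squares $e_j^{\,2}$ other than these two types survive in the expansions. Among $e_2,e_3,e_6,e_7$, each of these indices appears in at most one of the three reduced $b_i$'s above, so none of the corresponding squares can arise in the products. The only squared factors that can occur are $e_1^{\,2}$ and $e_5^{\,2}$: every occurrence of $e_1^{\,2}$ is paired with $t^\ell$ for some $\ell\ge 9$ (because $e_1$ appears weighted by $t^6,t^5,t^4$ in $b_7,b_6,b_5$), and thus is killed by $t^8\in 2\hat{R}$, while each $e_5^{\,2}$ is killed directly by the second auxiliary relation. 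Once these vanishings are invoked, collecting the remaining square-free monomials gives the three identities.
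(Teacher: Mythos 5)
Your proposal is correct and is essentially the same computation as the paper's: the paper writes $d_7\equiv 2(e_1t^6+e_3t^4+e_5t^2+e_7)$, $d_6\equiv 2(e_1t^5+e_2t^4+e_5t+e_6)$, $d_5\equiv 2(e_1t^4+e_5)$ modulo $2^2\hat R$ and multiplies these out, using exactly your two auxiliary relations $t^\ell\equiv 0\pmod{2\hat R}$ for $\ell\ge 8$ and $e_5^{\,2}\equiv 0\pmod{2\hat R}$; your factorization $d_i=2b_i$ with $b_i$ taken modulo $2\hat R$ amounts to the same thing. (One small slip: $b_5$ has two surviving terms, not four, but that does not affect the argument.)
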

\begin{proof}
	As we have modulo $2^{2}\hat{R}$
	\begin{equation}\label{eq:dsevendsixmodfour}
		d_{7}\equiv 2(e_{1}t^{6}+e_{3}t^{4}+e_{5}t^{2}+e_{7}) \text{ and } d_{6}\equiv 2(e_{1}t^{5}+e_{2}t^{4}+e_{5}t+e_{6}),
	\end{equation}
	we obtain
	\begin{align*}
		d_{7}d_{6}&\equiv 2^{2}[e(2)t^{11}+e(1,2)t^{10}+e(1,3)t^{9}+e(2,3)t^{8}+2\cdot e(1,5)t^{7}]\\
		&+2^{2}[(e(1,6)+e(2,5))t^{6}+(e(1,7)+e(3,5))t^{5}+(e(3,6)+e(2,7))t^{4} ]\\
		&+2^{2}[e(5)^{2}t^{3}+e(5,6)t^{2}+e(5,7)t+e(6,7)] \mod 2^{3}\hat{R}.
	\end{align*}
	Since $t^{k}\equiv 0 \mod 2\hat{R}$ for any $k\geq 8$ and $e(5)^{2}\equiv 0 \mod 2\hat{R}$, the equation (\ref{eq:dsevensix}) follows.
	
	Similarly, since $d_{5}\equiv 2(e_{1}t^{4}+e_{5}) \mod 2^{2}\hat{R}$, we obtain
	\begin{equation*}
		d_{7}d_{5}\equiv 2^{2}(e_{1}t^{6}+e_{3}t^{4}+e_{5}t^{2}+e_{7})(e_{1}t^{4}+e_{5}) \mod 2^{3}\hat{R},
	\end{equation*}
	thus the statement follows. Let $b$ denote the right-hand side of the congruence equation in (\ref{eq:dsevensix}) without the factor $2^{2}$. Then, modulo $2\hat{R}$
	\begin{align*}
		b\cdot e_{5}&\equiv e(1,5,6)t^{6}+e(1,5,7)t^{5}+(e(3,5,6)+e(2,5,7))t^{4}+e(5,6,7),\\
		b\cdot e_{1}t^{4}&\equiv e(1,5,6)t^{6}+e(1,5,7)t^{5}+e(1,6,7)t^{4},
	\end{align*}
	thus the congruence equation (\ref{eq:dsevensixfive}) holds.
\end{proof}

\begin{corollary}\label{cor:eonefiffourtwosix}
	Let $n=8$. Then, we have
	\begin{equation*}
		e_{1}^{15}\cdot d_{7}^2d_{6}\equiv e_{1}^{14}\cdot d_{7}^{3}\equiv 0\mod 2^6\hat{R}.
	\end{equation*}
\end{corollary}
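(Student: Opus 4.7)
The plan is to carefully track $2$-adic divisibility in $d_7^2 d_6$ and $d_7^3$, combining Lemma~\ref{lem:ntwosdnisquare} (which gives $d_7^2 \in 2^3\hat{R}$ and $d_6 \in 2\hat{R}$), the explicit modular expansions $d_7 \equiv 2\alpha \mod 4\hat{R}$ with $\alpha = e_1 t^6 + e_3 t^4 + e_5 t^2 + e_7$ and $d_6 = 2\beta$ with $\beta \equiv e_1 t^5 + e_2 t^4 + e_5 t + e_6 \mod 2\hat{R}$ (both appearing in the proof of Lemma~\ref{lem:dsevensixfivecon}), and the relation~(\ref{relationtandc}) at $n = 8$, which forces $t^l \in 2\hat{R}$ for every $l \geq 8$.

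Writing $d_7 = 2\alpha + 4w$ and $d_7^2 = 4\alpha^2 + 16y$ for suitable $w, y \in \hat{R}$, the formal expansions give $d_7^2 d_6 \equiv 8\alpha^2\beta \mod 32\hat{R}$ and $d_7^3 \equiv 8\alpha^3 + 16\alpha^2 w \mod 32\hat{R}$. Separately, $e_1^2 = e_2$ yields $e_1^{14} = e_2^7$, and $e_2^4 = (2 e_1 e_3 - e_4)^2 \equiv e_4^2 \equiv 0 \mod 2\hat{R}$ (because $e_4^2 = 2(e_3 e_5 - e_2 e_6 + e_1 e_7)$), so $e_2^7 \in 2\hat{R}$ and hence $e_1^{14}, e_1^{15} \in 2\hat{R}$. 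The claim thus reduces to the three sharper divisibilities $\alpha^2 \in 2\hat{R}$ (which will also absorb the $16\alpha^2 w$ term in the expression for $d_7^3$), $\alpha^3 \in 4\hat{R}$, and $\alpha^2\beta \in 4\hat{R}$.

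The heart of the proof, and the step I expect to require the most care, is this sharpening. The starting point is that $\alpha^2 \in 2\hat{R}$ (and, symmetrically, $\beta^2 \in 2\hat{R}$): using the exact relations $e_1^2 = e_2$, $e_3^2 = 2e_2 e_4 - 2 e_1 e_5 + e_6$, $e_5^2 \in 2\hat{R}$, $e_7^2 = 0$, the only potentially odd contributions to $\alpha^2$ are $e_2 t^{12}$ and $e_6 t^8$, both of which in fact lie in $2\hat{R}$ since $t^8 \in 2\hat{R}$. Writing $\alpha^2 = 2\alpha_2$, a direct computation of $\alpha_2$ modulo $2\hat{R}$ will identify $\alpha_2 \equiv t \cdot b \mod 2\hat{R}$, where $b$ is the polynomial appearing in~(\ref{eq:dsevensix}). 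Combined with the identity $\alpha\beta \equiv b \mod 2\hat{R}$, obtained by comparing~(\ref{eq:dsevensix}) with the direct expansion $d_7 d_6 = 4\alpha\beta + 8w\beta$ modulo $8\hat{R}$, this gives $\alpha\alpha_2 \equiv t\alpha^2\beta \mod 2\hat{R}$ and $\alpha_2\beta \equiv t\alpha\beta^2 \mod 2\hat{R}$. Both right-hand sides vanish modulo $2\hat{R}$ since $\alpha^2, \beta^2 \in 2\hat{R}$, so $\alpha^3 = 2\alpha\alpha_2 \in 4\hat{R}$ and $\alpha^2\beta = 2\alpha_2\beta \in 4\hat{R}$. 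Pulling this back, $d_7^2 d_6, d_7^3 \in 32\hat{R}$, and multiplying by $e_1^{15}, e_1^{14} \in 2\hat{R}$ delivers both products in $2^6\hat{R}$, as required.
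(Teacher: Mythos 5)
Your proof is correct, and it takes a genuinely different route from the paper's. The paper's argument works directly with $e_1^{15}d_7^2d_6$ and $e_1^{14}d_7^3$ modulo $2^6\hat{R}$: it first rewrites $e_1^{15}$ and $e_1^{14}$ as sums of square-free monomials $e(I)$ via (\ref{eq:eonefifteenfourteen}), multiplies by the explicit expansion (\ref{eq:dsevensquaremod}) of $d_7^2$ modulo $2^4\hat{R}$, and then tracks term-by-term which products of $e_i$'s survive --- the divisibility coming from the $d_i$'s and from the $e_1$-power are entangled throughout. You instead \emph{decouple} them: you prove the strictly stronger facts $d_7^2 d_6,\ d_7^3 \in 2^5\hat{R}$ on their own (not visible from the paper's proof), and separately that $e_1^{14}, e_1^{15} \in 2\hat{R}$, then multiply. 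The structural observation that makes the decoupling work, and which the paper does not isolate, is the identity $\alpha_2 := \alpha^2/2 \equiv t\,\alpha\beta \pmod{2\hat{R}}$, obtained by matching your direct computation of $\alpha^2$ against the polynomial $b$ of (\ref{eq:dsevensix}); together with $\alpha^2, \beta^2 \in 2\hat{R}$, this single identity drives both $\alpha^3 \in 4\hat{R}$ and $\alpha^2\beta \in 4\hat{R}$. I verified the congruence $\alpha_2 \equiv tb$ and the chain of divisibilities, and the argument holds. Your approach is cleaner and reveals the true $2$-divisibility of $d_7^2 d_6$ and $d_7^3$; the paper's ad hoc expansion is less illuminating here but is stylistically uniform with the subsequent Corollaries \ref{cor:eonefifteendsevendfive}--\ref{cor:neightupperbd}, which do not admit such a clean factorization of the $e_1$-power away from the rest.
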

\begin{proof}
	It follows from (\ref{eq:dsevendsixmodfour}) that
	\begin{equation}\label{eq:dsevensquaremod}
		d_{7}^2\equiv 2^{2}(e_{1}^{2}t^{12}+e_{3}^{2}t^{8}+e_{5}^{2}t^{4})+2^{3}(e_{1}e_{7}t^{6}+e_{3}e_{5}t^{6}+e_{3}e_{7}t^{4}+e_{5}e_{7}t^{2}) \mod 2^{4}\hat{R}.
	\end{equation}
	
	In addition, we have
	\begin{equation}\label{eq:eonefifteenfourteen}
		e_{1}^{15}\equiv 2e([1,5]) \,\, \text{ and }\,\, e_{1}^{14}\equiv 2e(2,3,4,5)+2e(1,2,4,7) \mod 2^2\hat{R}.
	\end{equation}
	Since, for any $E\in \{e_1, e_2, e_4, e_5\}$, we have
	\begin{equation}\label{eq:eonefiveE}
		e([1,5])\cdot E\equiv 0 \mod 2\hat{R},
	\end{equation}
	$e_{5}^{2}=2(e(4,6)-e(3,7))$, $e_{3}^{2}\equiv e_{6} \mod 2\hat{R}$, $t^8 \in 2 \hat{R}$, and, by Lemma \ref{lem:ntwosdnisquare}, $d_7^2 \in 2^3 \hat R$, it follows that
	\begin{align*}
		e_{1}^{15}d_{7}^{2}&\equiv 2e([1,5])\cdot (2^{2}e_{3}^{2}t^{8}+2^{2}e_{5}^{2}t^{4}+2^{3}e_{3}e_{7}t^{4})\\
		&\equiv 2^{3}e([1,6])t^{8}+2^{4}e(1,2,4,5,6,7)t^{4}+2^{4}e(1,2,4,5,6,7)t^{4}\\
		&\equiv 2^{3}e([1,6])t^{8} \mod 2^{5}\hat{R}.
	\end{align*}
	Hence, using (\ref{eq:dsevendsixmodfour}) and (\ref{eq:eonefiveE}), 
	we obtain $e_{1}^{15}d_{7}^{2}d_{6}\equiv 0 \mod 2^{6}\hat{R}$.
		
	Since we have
	\begin{equation*}
		e(2,3,4,5)\cdot e(2)\equiv e(2,3,4,5)\cdot e(5)\equiv 0  \text{ and }  e(2,3,4,5)\cdot e_{3}^{2}\equiv e(2,3,4,5,6) \mod 2\hat{R},
	\end{equation*}
 $e(2,3,4,5)\cdot e_{5}^{2}\equiv 2e(2,4,5,6,7) \mod 2^{2}\hat{R}$, and $e(2,3,4,5)\cdot e(3,7)\equiv e(2,4,5,6,7) \mod 2\hat{R}$, it follows from (\ref{eq:dsevensquaremod}) that
	\begin{align*}
		e(2,3,4,5)\cdot d_{7}^{2}&\equiv 2^{2}t^{8}e(2,3,4,5,6)+2^{3}t^{4}e(2,4,5,6,7)\\
		&+2^{3}t^{6}e(1,2,3,4,5,7)+2^{3}t^{4}e(2,4,5,6,7)\\
		&\equiv 2^{2}t^{8}e(2,3,4,5,6)+2^{3}t^{6}e(1,2,3,4,5,7) \mod 2^{4}\hat{R}.
	\end{align*}
	Hence, by (\ref{eq:dsevendsixmodfour}) we get
	\begin{equation*}
		e(2,3,4,5)\cdot d_{7}^{2}\cdot d_{7}\equiv 2^{3}t^{14}e([1,6]))+2^{3}t^{8}e([2,7]) \mod 2^{5}\hat{R}.
	\end{equation*} 
	Since $t^{8} \equiv 2\sum_{i=1}^{7}e_{i}t^{8-i} \mod 2^{2} \hat{R}$, we have
	\begin{equation*}
		2^{3}t^{14}e([1,6])\equiv 2^{3}t^{8}e([2,7])\equiv 2^{4}e([1,7])t^{7} \mod 2^{5}\hat{R}, 
	\end{equation*}
	thus $e(2,3,4,5)\cdot d_{7}^{2}\cdot d_{7}\equiv 0\mod 2^{5}\hat{R}$.
	
	Similarly, we have
	\begin{equation*}
		e(1,2,4,7)\cdot d_{7}^{2}\equiv 2^{3}t^{6}e(1,2,3,4,5,7)+2^{2}t^{8}e(1,2,4,6,7) \mod 2^{4}\hat{R},
	\end{equation*}
	thus
	\begin{equation*}
		e(1,2,4,7)\cdot d_{7}^{2}\cdot d_{7}\equiv 2^{3}t^{12}e(1,2,3,4,6,7)+2^{3}t^{10}e(1,2,4,5,6,7) \mod 2^{5}\hat{R}.
	\end{equation*}
	Since $t^{8} \equiv 2\sum_{i=1}^{7}e_{i}t^{8-i} \mod 2^{2} \hat{R}$, we have
	\begin{equation*}
		2^{3}t^{12}e(1,2,3,4,6,7)\equiv 2^{3}t^{10}e(1,2,4,5,6,7)\equiv 2^{4}e([1,7])t^{7} \mod 2^{5}\hat{R}, 
	\end{equation*}
	thus $e(1,2,4,7)\cdot d_{7}^{2}\cdot d_{7}\equiv 0\mod 2^{5}\hat{R}$. Therefore, $e_{1}^{14}d_{7}^{3}\equiv 0 \mod 2^{6}\hat{R}$.\end{proof}

\begin{corollary}\label{cor:eonefifteendsevendfive}
	Let $n=8$. Then, modulo $2^{4}\hat{R}$ we have
	\begin{equation*}
		e_{1}^{15}\cdot d_{7}d_{5}\equiv 0\,\, \text{ and }\,\, 
		e_{1}^{15}\cdot d_{7}d_{6}\equiv 2^{3}e([1,7]).
	\end{equation*}	
	In particular, we have
	\begin{equation*}
		e_{1}^{15}\cdot d_{7}d_{5}d_{4}^{2}\equiv e_{1}^{15}\cdot d_{7}d_{6}d_{4}d_{3}\equiv 0 \mod 2^{6}\hat{R}.
	\end{equation*}
\end{corollary}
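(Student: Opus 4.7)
The plan is to obtain both congruences modulo $2^4\hat{R}$ by combining Lemma \ref{lem:dsevensixfivecon} with the expansion $e_1^{15}\equiv 2e([1,5])\mod 2^2\hat{R}$ from $(\ref{eq:eonefifteenfourteen})$ and the vanishing rules $(\ref{eq:eonefiveE})$. Since Lemma \ref{lem:dsevensixfivecon} gives $d_7d_5$ and $d_7d_6$ up to $2^3\hat{R}$ with an overall factor of $2^2$, and $e_1^{15}$ is divisible by $2$, the product already carries a factor of $2^3$; both error sources (namely $2^3\hat{R}$ times $e_1^{15}$, and the $2^2\hat{R}$-correction to $e_1^{15}$ times $2^2\cdot(\text{stuff})$) contribute only to $2^4\hat{R}$. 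Everything thus reduces to computing $e([1,5])\cdot [\text{explicit polynomial}]$ modulo $2\hat{R}$.

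For $e_1^{15}d_7d_5$, each of the three monomials $e(1,7)$, $e(3,5)$, $e(5,7)$ appearing in Lemma \ref{lem:dsevensixfivecon} contains a factor $e_i$ with $i\in\{1,2,4,5\}$, so $e([1,5])\cdot e_i\equiv 0\mod 2\hat{R}$ by $(\ref{eq:eonefiveE})$, which proves the first congruence. For $e_1^{15}d_7d_6$, I would run through the nine monomials in $(\ref{eq:dsevensix})$: eight of them are killed by $e([1,5])$ modulo $2\hat{R}$, namely those containing some $e_1,e_2,e_4$, or $e_5$ via $(\ref{eq:eonefiveE})$, together with the exceptional term $e(3,6)$ which instead requires $e_3^2\equiv 0\mod 2\hat{R}$ coming from the quadratic relation $(\ref{chowringY})$. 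The unique survivor is $e(6,7)$, and $e([1,5])\cdot e(6,7)=e([1,7])$ yields exactly $2^3 e([1,7])$ as claimed.

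For the ``in particular'' statements I would treat the two products separately. Since $d_4\in 2\hat{R}$ by Lemma \ref{lem:ntwosdnisquare}, we have $d_4^2\in 2^2\hat{R}$, and multiplying the first congruence gives $e_1^{15}d_7d_5 d_4^2\in 2^4\hat{R}\cdot 2^2\hat{R}=2^6\hat{R}$. For the second product, I write $e_1^{15}d_7d_6=2^3 e([1,7])+2^4 h$ with $h\in\hat{R}$; the contribution $2^4 h\cdot d_4 d_3$ lies in $2^4\cdot(2\hat{R})^2=2^6\hat{R}$, so it suffices to bound $2^3 e([1,7])d_4 d_3$. Here I would invoke that $e([1,7])$ is the top-degree monomial in $\CH(Y)$ (since $\deg e([1,7])=28=\dim Y$), so any $e_i$-summand in the expansions $(\ref{chernclassd})$ of $d_4$ and $d_3$ annihilates it; only the leading contribution $\binom{8}{4}\binom{8}{3}t^7 e([1,7])=70\cdot 56\cdot t^7 e([1,7])$ survives, whose $2$-adic valuation is $4$, so the whole expression lies in $2^7\hat{R}\subset 2^6\hat{R}$.

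The main bookkeeping challenge is the case analysis of the nine monomials in the expansion of $e_1^{15}d_7d_6$: eight are handled uniformly by $(\ref{eq:eonefiveE})$, but $e(3,6)$ is an exception requiring a separate appeal to the quadratic relations in $\CH(Y)$. Additional care is needed to confirm that the error terms coming from both Lemma \ref{lem:dsevensixfivecon} and $(\ref{eq:eonefifteenfourteen})$ only enter at order $2^4\hat{R}$ and do not contaminate the $2^3$ coefficient of $e([1,7])$.
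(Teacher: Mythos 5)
Your overall structure matches the paper's proof closely: the same appeal to Lemma~\ref{lem:dsevensixfivecon}, to $(\ref{eq:eonefifteenfourteen})$, and to $(\ref{eq:eonefiveE})$ for the two congruences modulo $2^4\hat{R}$, the same use of $d_4^2\in 2^2\hat{R}$ for the first product, and the same top-degree argument for $e([1,7])\cdot d_4d_3$ (where only the $t^7$-coefficient of $d_4d_3$, with $2$-adic valuation $4$, survives) for the second product. Those parts are all correct.

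However, your justification for the $e(3,6)$ term is wrong. You claim $e_3^2\equiv 0\mod 2\hat{R}$, but the relation $(\ref{chowringY})$ with $i=3$ gives $e_3^2 - 2e_2e_4 + 2e_1e_5 - e_6 = 0$, hence $e_3^2 \equiv e_6 \mod 2\hat{R}$, which is not $0$. The monomial $e([1,5])\cdot e(3,6) = e_1e_2e_4e_5\cdot e_3^2 e_6$ does vanish modulo $2\hat{R}$, but the reason is $e_3^2 e_6 \equiv e_6^2 \equiv 0\mod 2\hat{R}$: since $e_j = 0$ for $j\ge 8$ when $n=8$, the relation for $i=6$ collapses to $e_6^2 = 2e_5e_7$. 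This is precisely the identity $e_3^2 e_6 \equiv 0\mod 2\hat{R}$ that the paper invokes at this step. As written, your argument would not actually establish that the $e(3,6)$ contribution drops out, even though the conclusion is true; the patch is one line but it is needed.
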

\begin{proof}
	It follows from Lemma \ref{lem:dsevensixfivecon} and (\ref{eq:eonefifteenfourteen}) that
	\begin{equation*}
		e_{1}^{15}\cdot d_{7}d_{5}\equiv 2^{3}\big[(e(1,7)+e(3,5))t^{4}+e(5,7)\big]\cdot e([1,5]) \mod 2^{4}\hat{R}.
	\end{equation*}
	Then, by (\ref{eq:eonefiveE}), we get
	\begin{equation*}
		e(1,7)e([1,5])\equiv e(3,5)e([1,5])\equiv e(5,7)e([1,5])\equiv 0 \mod 2\hat{R},
	\end{equation*}
	thus $e_{1}^{15}\cdot d_{7}d_{5}\equiv 0 \mod 2^{4}\hat{R}$. By Lemma \ref{lem:ntwosdnisquare}, we have $d_4^2 \equiv 0 \mod 2^2\hat{R}$, so multiplying by $e_1^{15}\cdot d_7 d_5$ gives $e_1^{15} d_7 d_5 d_4^2 \equiv 0 \mod 2^6 \hat{R}$.

	Similarly, by Lemma \ref{lem:dsevensixfivecon}, (\ref{eq:eonefifteenfourteen}), (\ref{eq:eonefiveE}), and noting that $e_3^2 e_6 \equiv 0 \mod 2 \hat{R}$, we get
	\begin{equation*}
		e_{1}^{15}\cdot d_{7}d_{6}\equiv 2^{3}e([1,7]) \mod 2^{4}\hat{R}.
	\end{equation*}
	As $d_{4}d_{3}=2^{4}m\cdot t^{7}+g$ for some odd integer $m$ and some homogeneous polynomial $g$ in $e_{1},\dots, e_{7}, t$ of degree $7$ such that the degree of $g$ with respect to $t$ is less than or equal to $6$, 
	we get
	\begin{equation*}
		e([1,7])\cdot d_{4}d_{3}=2^{4}m\cdot t^{7}\cdot e([1,7]),
	\end{equation*} 
	thus the statement follows.
\end{proof}

\begin{corollary}\label{cor:eonefifteendsevendfiveprime}
	Let $n=8$. Then, we have
	\begin{equation*}
		e_{1}^{14}\cdot d_{7}d_{6}d_{4}^{2}\equiv 0 \mod 2^{6}\hat{R}.
	\end{equation*}
\end{corollary}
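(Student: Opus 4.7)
The plan is to reduce the claim to a tractable calculation modulo $2\hat{R}$ by two successive reductions: first, substituting low-order expansions of $d_7 d_6$ and $d_4^2$; second, using the $2$-divisibility of $e_1^{14}$.

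First, I would compute $d_4^2$ modulo $2^3 \hat{R}$. Writing $d_4 = 2\alpha$ using (\ref{chernclassd}), a direct squaring yields $\alpha^2 \equiv \mu \mod 2\hat{R}$ with $\mu := t^8 + t^6 e_2 + t^4 e_4 + t^2 e_6$; this uses the Chow ring relations of (\ref{chowringY}), namely $e_1^2 = e_2$, $e_2^2 \equiv e_4 \mod 2$, $e_3^2 \equiv e_6 \mod 2$, and $e_4^2 \in 2\hat{R}$ (the last because $e_8=0$ forces $e_4^2 = 2(e_3 e_5 - e_2 e_6 + e_1 e_7)$). Consequently $d_4^2 \equiv 4\mu \mod 2^3 \hat{R}$.

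Combining this with $d_7 d_6 \equiv 4\gamma \mod 2^3 \hat{R}$ from (\ref{eq:dsevensix}) --- where $\gamma$ denotes the bracketed polynomial in that congruence --- I obtain $d_7 d_6 d_4^2 \equiv 16 \gamma \mu \mod 2^5 \hat{R}$. Multiplying by $e_1^{14}$ and using $e_1^{14} \in 2\hat{R}$ by (\ref{eq:eonefifteenfourteen}), the error term is absorbed into $2^6 \hat{R}$, so the claim reduces to $e_1^{14} \gamma \mu \equiv 0 \mod 4\hat{R}$. Invoking $e_1^{14} \equiv 2(e(2,3,4,5) + e(1,2,4,7)) \mod 4\hat{R}$ from (\ref{eq:eonefifteenfourteen}) once more, it suffices to verify
\[
(e(2,3,4,5) + e(1,2,4,7))\cdot \gamma \cdot \mu \equiv 0 \mod 2\hat{R}.
\]

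I expect the main obstacle to be this final verification, which I would resolve by a direct term-by-term expansion. Using $e_i^2 \in 2\hat{R}$ for $i \in \{4,5,6\}$, $e_7^2 = 0$, together with the substitutions $e_1^2 = e_2$, $e_2^2 \equiv e_4$, $e_3^2 \equiv e_6 \mod 2\hat{R}$, the product $(e(2,3,4,5) + e(1,2,4,7))\gamma$ collapses modulo $2\hat{R}$ to a short sum of square-free six-element monomials in the $e_i$'s times small powers of $t$ (most cross-terms being killed because they already contain a square of some $e_i$ with $i \ge 4$, or a factor of $e_7^2$). Multiplying this sum by $\mu$, every resulting monomial either contains a square $e_i^2$ with $i \ge 4$ (hence lies in $2\hat{R}$) or contains $t^k$ with $k \ge 8$ (hence lies in $2\hat{R}$ via (\ref{eq:tncone}) combined with $d_i \in 2\hat{R}$ from Lemma \ref{lem:ntwosdnisquare}), completing the argument.
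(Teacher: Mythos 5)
Your proposal is correct and follows essentially the same route as the paper: both expand $d_7 d_6 \bmod 2^3\hat R$ (from Lemma~\ref{lem:dsevensixfivecon}), $d_4^2 \bmod 2^3\hat R$ by direct squaring, and $e_1^{14} \bmod 2^2\hat R$ (from (\ref{eq:eonefifteenfourteen})), then reduce to checking that the residual degree-$28$ product of the three leading coefficients vanishes modulo $2\hat R$. The paper merely multiplies in a different order (first $e_1^{14}\cdot d_7 d_6$, then $d_4^2$), but the final verification is the same computation. One small caveat in your last step: the monomial $e([2,7])\cdot e_2 t^6$ arising from the $\mu$-term $t^6 e_2$ contains only $e_2^2$ (not an $e_i^2$ with $i\ge 4$) and only $t^6$, so it is not directly killed by the two reasons you list; one must first substitute $e_2^2\equiv e_4 \bmod 2\hat R$ to produce $e_4^2$, or observe that its degree in the $e_i$'s exceeds $\dim Y=28$ and hence it vanishes outright.
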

\begin{proof}
	By Lemma \ref{lem:dsevensixfivecon}, modulo $2^3\hat{R}$ 
	we have
	\begin{align*}
		e(2,3,4,5)\cdot d_{7}d_{6}&\equiv 2^2 [e([1,6])t^{6}+e([1,5]\cup \{7\})t^{5}+e([2,7])],\\
		e(1,2,4,7)\cdot d_{7}d_{6}&\equiv 2^2 [e([1,5]\cup \{7\})t^{5}+e([1,4]\cup [6,7])t^{4}+e([1,2]\cup [4,7])t^{2}],
	\end{align*}
	thus by (\ref{eq:eonefifteenfourteen}) we get
	\begin{equation}\label{eq:donefourteendsevensix}
		e_{1}^{14}\cdot d_{7}d_{6}\equiv 2^{3}\big[e([1,6])t^{6}+e([1,4]\cup [6,7])t^{4}+e([1,2]\cup [4,7])t^{2}+e([2,7])\big] \mod 2^{4}\hat{R}.
	\end{equation}
	By a direct calculation, we get
	\begin{equation*}
		d_{4}^{2}\equiv r_{1}\cdot  t^{8}+ r_{2}\cdot e_{2}t^{6}+ r_{3}\cdot e_{2}^{2}t^{4}+ r_{4}\cdot e_{3}^{2}t^{2} \mod 2^{3}\hat{R},
	\end{equation*}
	for some integers $r_{1}, r_{2}, r_{3}, r_{4}$ divisible by $4$. Let $b$ denote the right-hand side of the equation (\ref{eq:donefourteendsevensix}) without the factor $2^{3}$. Then, as $t^{j}\equiv 0 \mod 2\hat{R}$ for any $j\geq 8$ and $e_{j}^{2}\equiv 0 \mod 2\hat{R}$ for any $j\geq 4$, each summand of $b$ multiplied by any of $t^{8}$, $e_{2}$, $e_{3}^{2}$ is congruent to $0$ modulo $2\hat{R}$. Hence, the statement follows.\end{proof}

\begin{corollary}\label{cor:neightupperbd}
	Let $n=8$. Then, we have 
	\begin{equation*}
		e_{1}^{13}\cdot d_{7}d_{6}d_{5}d_{4}\equiv 0 \mod 2^{6}\hat{R}.
	\end{equation*}
\end{corollary}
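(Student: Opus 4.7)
The strategy is to exploit Lemma \ref{lem:dsevensixfivecon} together with mod-$4$ expansions of $d_4$ and $e_1^{13}$, and to verify that the product of the three leading mod-$2$ parts vanishes modulo $2\hat R$. Already $e_1^{13} \in 2\hat R$, $d_4 \in 2\hat R$, and $d_5 d_6 d_7 \in 2^3 \hat R$ by Lemma \ref{lem:ntwosdnisquare}, so $e_1^{13} d_7 d_6 d_5 d_4 \in 2^5 \hat R$ automatically; only one extra factor of $2$ needs to be produced.

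Concretely, (\ref{eq:dsevensixfive}) gives $d_5 d_6 d_7 \equiv 2^3 P_0 \mod 2^4 \hat R$ with
\[
P_0 = (e(1,6,7) + e(2,5,7) + e(3,5,6))\, t^4 + e(5,6,7),
\]
while (\ref{chernclassd}) gives $d_4 \equiv 2 f_0 \mod 4\hat R$ with $f_0 \equiv t^4 + t^3 e_1 + t^2 e_2 + t e_3 + e_4 \mod 2\hat R$. To compute $e_1^{13}$ modulo $4\hat R$, I plan to iterate the quadratic relations of $\CH(Y)$, namely $e_1^2 = e_2$, $e_2^2 = 2 e_1 e_3 - e_4$, and (using $e_8 = 0$) $e_4^2 = 2(e_3 e_5 - e_2 e_6 + e_1 e_7)$, in order to reduce $e_1^{13} = e_1 (e_2^2)^3$ to a squarefree form and extract its mod-$4$ part. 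The outcome should be
\[
e_1^{13} \equiv -2 A \mod 4\hat R, \qquad A := e(1,3,4,5) - e(1,2,4,6) + e(2,4,7).
\]
Multiplying the three expansions and noting that every cross-term between error parts contributes at order at least $2^6$, one obtains $e_1^{13} d_7 d_6 d_5 d_4 \equiv -2^5 A f_0 P_0 \mod 2^6 \hat R$.

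It then remains to verify $A \cdot P_0 \equiv 0 \mod 2\hat R$, which immediately yields $A f_0 P_0 \equiv 0 \mod 2\hat R$ and thus the corollary. This is a direct expansion of twelve squarefree products of $e_i$'s: most contain $e_j^2$ for some $j \ge 4$ and vanish modulo $2\hat R$ by Lemma \ref{lem:ntwosdnisquare}, while a few involve $e_2^2 \equiv e_4 \mod 2\hat R$ followed by another factor of $e_4$, again producing $e_4^2 \equiv 0$. Only two monomials survive, namely $e(1,3,4,5)\cdot e(1,6,7) t^4$ and $e(2,4,7)\cdot e(3,5,6) t^4$, each reducing via $e_1^2 = e_2$ to $e(2,3,4,5,6,7) t^4$; they enter $A \cdot P_0$ with matching signs, so their sum is $2 e(2,3,4,5,6,7) t^4 \equiv 0 \mod 2\hat R$.

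The main technical obstacle is the careful mod-$4$ expansion of $e_1^{13}$, which requires iterating the $e_i^2$-relations while tracking coefficients; once the explicit form of $A$ is in hand, the remaining verification is a short monomial check powered by the vanishing $e_j^2 \equiv 0 \mod 2\hat R$ for $j \ge 4$.
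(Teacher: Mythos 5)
Your proposal is correct, and it is essentially the same argument as the paper's, just with a slightly different bookkeeping. Both proofs rest on the same three inputs: the congruence $d_7 d_6 d_5 \equiv 2^3 P_0 \bmod 2^4\hat R$ from (\ref{eq:dsevensixfive}), the mod-$4$ expansion of $e_1^{13}$ (the paper writes $e_1^{13} \equiv 2e(1,3,4,5)+2e(1,2,4,6)+2e(2,4,7) \bmod 2^2\hat R$, which agrees with your $-2A$ since $2\equiv -2 \bmod 4$), and a reduction of $d_4$ modulo $4\hat R$. Where you differ is organizational: the paper splits $d_4 = -2\cdot 35\, e_1 t^3 + 2d_4'$, uses degree-of-$\dim Y$ arguments to show that multiplying by $e_1^{13}$ kills everything except the $e_1 b t^7$ piece, and then computes $e_1 b\cdot\big(e(1,3,4,5)+e(1,2,4,6)+e(2,4,7)\big) \equiv 2e([1,7]) \equiv 0 \bmod 2\hat R$; you instead multiply all three mod-$2^{k}$ expansions at once to get $-2^5 A P_0 f_0 \bmod 2^6\hat R$ and verify $A P_0 \equiv 0 \bmod 2\hat R$ directly. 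The two mod-$2$ verifications are equivalent (yours is the paper's divided by the extraneous $e_1 t^7$ factor), and I checked your nine-term expansion: the seven terms with $e_j^2$ for $j\ge 4$ or an iterated $e_2^2 \equiv e_4$ do vanish, and the two survivors $e(1,3,4,5)e(1,6,7)$ and $e(2,4,7)e(3,5,6)$ both equal $e(2,3,4,5,6,7)$, giving an even coefficient. Your triple-product framing is arguably a touch cleaner since it absorbs all the cross-term vanishing into a single ``error terms contribute at order $\ge 2^6$'' remark. One small citation issue: the fact that $e_j^2 \equiv 0 \bmod 2\hat R$ for $j\ge 4$ (when $n=8$) does not follow from Lemma \ref{lem:ntwosdnisquare}, which is about the classes $d_i$; it is an immediate consequence of the relations (\ref{chowringY}) together with $e_j = 0$ for $j\ge 8$, which is also how the paper justifies it in the final paragraph of this proof.
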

\begin{proof}
Write $d_{4}=-2\cdot 35\cdot e_{1}t^{3}+2d_{4}'$, where $d_{4}'=35t^{4}+15e_{2}t^{2}-5e_{3}t+e_{4}$. Let $b=e(1,6,7)+e(2,5,7)+e(3,5,6)$. Then, by (\ref{eq:dsevensixfive}) we have
	\begin{equation*}
		d_{7}d_{6}d_{5}d_{4}\equiv -2^{4}\cdot 35 e_{1}\cdot bt^{7}+ 2^{4}\big( bd_{4}'t^{4}+e(5,6,7)d_{4}'-e(1,5,6,7)\cdot 35t^{3}\big) \mod 2^{5}\hat{R}.
	\end{equation*}
	Since $t^{8}\equiv 0 \mod 2\hat{R}$, the summand $2^{4}\cdot 35bt^{8}$ of $2^{4}\cdot bd_{4}'t^{4}$ is congruent to $0$ modulo $2^{5}\hat{R}$. Since $\deg (e([1,7]))=28$ and each summand of $b$ has degree $14$, it follows that
	\begin{equation*}
		e_{1}^{13}e(5,6,7)=e_{1}^{13}be_{2}=e_{1}^{13}be_{3}=e_{1}^{13}be_{4}=0.
	\end{equation*}
	Moreover, since $e_1^{13} \in 2 \hat{R}$, we obtain
	\begin{equation*}
		e_1^{13} d_{7}d_{6}d_{5}d_{4}\equiv -2^{4}\cdot 35 e_{1}\cdot bt^{7} \cdot e_1^{13} \mod 2^{6}\hat{R}.
	\end{equation*}

	Regarding $e_1^{13}$, more exactly, by direct computation, 
	\begin{equation*}
		e_{1}^{13}=e_{1}^{8+4+1}\equiv 2e(1,3,4,5)+2e(1,2,4,6)+2e(2,4,7) \mod 2^{2}\hat{R}.
	\end{equation*}	
	Hence, we have
	\begin{equation*}
		e_{1}^{13}d_{7}d_{6}d_{5}d_{4}\equiv -2^{5}\cdot 35e_{1}bt^{7} \big(e(1,3,4,5)+e(1,2,4,6)+e(2,4,7)\big) \mod 2^{6}\hat{R}.
	\end{equation*}
	Since $e_{j}^{2}\equiv 0 \mod 2\hat{R}$ for any $j\geq 4$ and $e_{1}b=e(2,6,7)+e(1,2,5,7)+e(1,3,5,6)$, modulo $2\hat{R}$ we have
	\begin{align*}
		&e_{1}b\cdot e(1,3,4,5)\equiv e([1,7]),\\
		&e_{1}b\cdot e(1,2,4,6)\equiv e(1,2,5,7)\cdot e(1,2,4,6)= (2e(1,3)+e(4))e(2,4,5,6,7)\equiv 0, \\
		&e_{1}b\cdot e(2,4,7)\equiv e([1,7]),
	\end{align*}
	the statement follows.\end{proof}

\begin{corollary}\label{cor:lowerbdneight}
	Let $n=8$. Then, every element of the form $e_{1}^{i}f$ for some $0\leq i\leq 35$, where $f$ denotes a monomial
	in the variables 
	$d_{1},\ldots, d_{7}, 2t$ with $\deg (f)=35-i$, is congruent to $0$ modulo $2^{6}\hat{R}$. In particular, 
	\begin{equation*}
		\tau_{2}(\HSpin(16))=6.
	\end{equation*}
\end{corollary}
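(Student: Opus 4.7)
The strategy is to first prove the main assertion---every monomial $M=e_{1}^{i}\, d_{1}^{a_{1}}\cdots d_{7}^{a_{7}}(2t)^{b}$ of total degree $35$ lies in $2^{6}\hat{R}$---and then combine it with the known upper bound to deduce $\tau_{2}(\HSpin(16))=6$. Since $R$ is generated by $e_{1}, d_{1}, \dots, d_{7}, 2t$, every top-degree element of $R$ is a $\Z$-linear combination of such monomials, so the claim implies that the degree-$35$ part of $R$ lies in $2^{6}\hat{R}=2^{6}\Z\cdot x_{0}$, giving $\tau_{2}(\HSpin(16))\geq 6$. The matching inequality $\tau_{2}(\HSpin(16))\leq 6$ follows from Corollary~\ref{cor:2powerthreeupp} applied with $n=8=2^{3}$ (so $s=3$), using $\tau_{2}(\Spin(16))=3$ from Theorem~\ref{thm:totaromain}.

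For the main assertion, I would first restrict to $0\leq i\leq 28$, since $\dim Y=n(n-1)/2=28$ forces $e_{1}^{i}=0$ in $\hat{R}$ whenever $i\geq 29$. Then Lemma~\ref{lem:ntwosdnisquare} supplies the crude bound
\begin{equation*}
\hat{v}_{2}(M)\geq N_{d}+b+\lfloor a_{5}/2\rfloor+\lfloor a_{6}/2\rfloor+\lfloor a_{7}/2\rfloor,
\end{equation*}
where $N_{d}=a_{1}+\cdots+a_{7}$. Any monomial for which this right-hand side is at least $6$ is settled at once. In particular every $M$ with $N_{d}+b\geq 6$, and every $M$ carrying sufficiently many repeated factors from $\{d_{5},d_{6},d_{7}\}$, is immediately disposed of.

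The remaining ``hard'' monomials are those with $N_{d}+b\leq 5$ and with $a_{5},a_{6},a_{7}\leq 1$ (aside from a single allowed pair), subject to the degree constraint $i+\sum j\, a_{j}+b=35$ and $i\leq 28$. A direct enumeration of the finitely many tuples $(i,a_{1},\ldots,a_{7},b)$ satisfying these inequalities yields a short explicit list. Every entry on that list either coincides with a monomial already computed in Corollary~\ref{cor:eonefiffourtwosix} ($e_{1}^{15}d_{7}^{2}d_{6}$, $e_{1}^{14}d_{7}^{3}$), Corollary~\ref{cor:eonefifteendsevendfive} ($e_{1}^{15}d_{7}d_{5}d_{4}^{2}$, $e_{1}^{15}d_{7}d_{6}d_{4}d_{3}$), Corollary~\ref{cor:eonefifteendsevendfiveprime} ($e_{1}^{14}d_{7}d_{6}d_{4}^{2}$), or Corollary~\ref{cor:neightupperbd} ($e_{1}^{13}d_{7}d_{6}d_{5}d_{4}$); or it is obtained from one of these by multiplying by an extra factor from $\{d_{1},d_{2},d_{3},2t\}$ (which can only raise $\hat{v}_{2}$); or it is reducible to one of them modulo $2^{6}\hat{R}$ via the relations (\ref{chernclassd}) and (\ref{eq:relationcitwo}).

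The main obstacle is the bookkeeping of this case analysis. The crude estimate leaves a finite but nontrivial collection of tuples to inspect, and shapes carrying several low-index factors $d_{1},d_{2},d_{3}$ are slightly delicate because they do not appear explicitly in the earlier corollaries; for such monomials one must either regroup them to expose a recognized prefix (for example, $e_{1}^{15}d_{7}d_{6}$ followed by a degree-$7$ tail) or absorb the low-index factors by means of elementary identities before invoking Lemma~\ref{lem:ntwosdnisquare} once more. Once this routine-but-careful enumeration is completed, every top-degree monomial of $R$ is accounted for, and the equality $\tau_{2}(\HSpin(16))=6$ follows.
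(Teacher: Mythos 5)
Your overall strategy matches the paper's: restrict to $i\le 28$, use Lemma~\ref{lem:ntwosdnisquare} for a coarse divisibility bound, isolate the exceptional monomials, and dispatch them via Corollaries~\ref{cor:eonefiffourtwosix}--\ref{cor:neightupperbd}. But there is a genuine gap in the middle. Your ``crude bound'' counts only the powers of $2$ coming from $d_1,\dots,d_7$ and $2t$; it ignores the $2$-divisibility of the $e_1$-power itself. The paper's argument relies essentially on $e_1^{8}\equiv 0\bmod 2\hat R$ and $e_1^{16}\equiv 0\bmod 2^{3}\hat R$ (equation (\ref{eq:eoneeightmod2})), together with the refined bound (\ref{eq:kfivetosevendivisible}) for products of $d_5,d_6,d_7$ and the degree-based count (\ref{eq:2tikdivisiblesjplus1}) for the block $(2t)^{i_0}\prod_{k\le 4} d_k^{i_k}$. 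Without the $e_1$-divisibility, the set of tuples surviving your filter $N_d+b\le 5$, $a_5,a_6,a_7\le 1$ (plus one pair) is nowhere near a ``short explicit list'': for example $e_1^{15}d_4^{5}$ or $e_1^{28}d_3 d_2^{2}$ have $N_d+b\le 5$ and no repeated high-index $d$, so your bound gives at most $5$, and they are not among (nor reducible by your stated moves to) the monomials handled in the auxiliary corollaries; they are killed precisely because $e_1^{15}\equiv 0\bmod 2$ and $e_1^{28}\equiv 0\bmod 2^{4}$. Your closing heuristic --- that multiplying by a factor from $\{d_1,d_2,d_3,2t\}$ ``can only raise $\hat v_2$'' --- is also unsound in this context: to stay in top degree you must simultaneously lower the $e_1$-exponent, which can lower the $e_1$-contribution to the $2$-valuation, so it is not a monotone operation. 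To repair the argument you need to add the $e_1$-power divisibility to your toolkit and redo the enumeration along the lines of Cases 1--5 in the paper, at which point the only genuinely recalcitrant monomials are exactly the ones covered by Lemma~\ref{lem:dsevensixfivecon} and Corollaries~\ref{cor:eonefiffourtwosix}--\ref{cor:neightupperbd}.
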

\begin{proof}
	Since $\dim Y=28$, it suffices to consider an element $e_{1}^{i}f$ for $0\leq i\leq 28$. Write $f=(2t)^{i_{0}}\prod_{k=1}^{7}d_{k}^{i_{k}}$ with $\sum_{k=0}^{7} i_{k}=35-i$. Then, by Lemma \ref{lem:ntwosdnisquare}, for each $0\leq j\leq 5$,
	\begin{equation}\label{eq:2tikdivisiblesjplus1}
		g:=(2t)^{i_{0}}\prod_{k=1}^{4}d_{k}^{i_{k}}\equiv 0 \mod 2^{j+1}\hat{R}
	\end{equation}
	if $\deg(g)=i_{0}+\sum_{k=1}^{4}k\cdot i_{k}\geq 4j+1$. Moreover, by direct computation,
	\begin{equation}\label{eq:eoneeightmod2}
		e_{1}^{8}\equiv 0 \mod 2\hat{R},\quad e_{1}^{16}\equiv 0 \mod 2^{3}\hat{R}.
	\end{equation}

	It follows from Lemma \ref{lem:ntwosdnisquare} that
	\begin{equation*}
		h:=\prod_{k=5}^{7}d_{k}^{i_{k}}\equiv
		\begin{cases}
			0 \mod 2^{5}\hat{R} & \text{ if } \sum_{k=5}^{7}i_{k}\geq 4,\\
			0 \mod 2^{4}\hat{R} & \text{ if } \sum_{k=5}^{7}i_{k}=3 \text{ and } (i_{7},i_{6},i_{5})\neq (1,1,1).
		\end{cases}
	\end{equation*}
	If $\deg(h)=\sum_{k=5}^{7}k\cdot i_{k}\geq 22$, then $\sum_{k=5}^{7}i_{k}\geq 4$. Similarly, if $\deg(h)\geq 19$, then $\sum_{k=5}^{7}i_{k}\geq 3$ and $(i_{7},i_{6},i_{5})\neq (1,1,1)$. Therefore,
	\begin{equation}\label{eq:kfivetosevendivisible}
		h\equiv
		\begin{cases}
			0 \mod 2^{5}\hat{R} & \text{ if } \deg(h)\geq 22,\\
			0 \mod 2^{4}\hat{R} & \text{ if } \deg(h)\geq 19,\\
			0 \mod 2^{3}\hat{R} & \text{ if } \deg(h)\geq 14.
		\end{cases}
	\end{equation}

	\medskip
	
	\noindent{\it Case 1.} Let $\deg(g)\in [21,35]$. Then, it immediately follows from (\ref{eq:2tikdivisiblesjplus1}) that $e_{1}^{i}f\equiv 0 \mod 2^{6}\hat{R}$.
	
	\medskip
	\smallskip
	
	\noindent{\it Case 2.} Let $\deg(g)\in [4j+1,4j+4]$ for  $2\leq j\leq 4$. Then, $i+\deg(h)=i+\sum_{k=5}^{7}k\cdot i_{k}\in [31-4j, 34-4j]$. Hence, by (\ref{eq:eoneeightmod2}) and Lemma \ref{lem:ntwosdnisquare}, a straightforward case-by-case analysis shows that 
	\begin{equation*}
	e_{1}^{i}\prod_{k=5}^{7}d_{k}^{i_{k}}\equiv 0 \mod 2^{5-j}\hat{R},
	\end{equation*}
	 thus the statement follows from (\ref{eq:2tikdivisiblesjplus1}).

	\medskip
	\smallskip
	
	\noindent{\it Case 3.} Let $\deg(g)\in [5,8]$. Then, $i+\deg(h)\in [27, 30]$. By (\ref{eq:2tikdivisiblesjplus1}), (\ref{eq:eoneeightmod2}), and Lemma \ref{lem:ntwosdnisquare}, we may assume that $i\in [0, 15]$. If $i\in [0,7]$, then $\deg(h)\geq 20$, thus the statement follows from (\ref{eq:2tikdivisiblesjplus1}) and (\ref{eq:kfivetosevendivisible}). Similarly, if $i\in [8,13]$, then $\deg(h)\geq 14$, thus the statement follows from (\ref{eq:2tikdivisiblesjplus1}), (\ref{eq:eoneeightmod2}), and (\ref{eq:kfivetosevendivisible}). If $i=14, 15$, then again by (\ref{eq:2tikdivisiblesjplus1}), (\ref{eq:eoneeightmod2}), and (\ref{eq:kfivetosevendivisible}), the statement holds except for the following cases:
	\begin{equation*}
		(i, \deg(h), \deg(g))=(14, 13, 8), (15, 12, 8), (15, 13, 7).
	\end{equation*}	
	In each case, one can easily show that the statement holds except for the following three elements:
	\begin{equation*}
		e_{1}^{14}\cdot d_{7}d_{6}d_{4}^{2},\quad e_{1}^{15}\cdot d_{7}d_{5}d_{4}^{2},\quad  e_{1}^{15}\cdot d_{7}d_{6}d_{4}d_{3},
	\end{equation*}
	which are congruent to $0$ modulo $2^{6}\hat{R}$ by Corollaries \ref{cor:eonefifteendsevendfive} and \ref{cor:eonefifteendsevendfiveprime}.
	
		\medskip
	\smallskip
	
	\noindent{\it Case 4.} Let $\deg(g)=0$. If $i\in [24, 28]$, then by (\ref{eq:eoneeightmod2}) $e_{1}^{i}\prod_{k=5}^{7}d_{k}^{i_{k}}\equiv 0 \mod 2^{6}\hat{R}$ except for $e_{1}^{28}d_{7}$, which is congruent to $0$ modulo $2^{6}\hat{R}$ by (\ref{eq:dsevendsixmodfour}). If $i\in [16, 23]$, then by (\ref{eq:eoneeightmod2}), $e_{1}^{i}\prod_{k=5}^{7}d_{k}^{i_{k}}\equiv 0 \mod 2^{6}\hat{R}$ except for the following two elements:
	\begin{equation*}
		e_{1}^{22}d_{7}d_{6},\,\, e_{1}^{23}d_{7}d_{5}.
	\end{equation*}
	By Lemma \ref{lem:dsevensixfivecon}, we see that each summand of $e_{1}^{23}d_{7}d_{6}$ and  $e_{1}^{22}d_{7}d_{5}$ modulo $2^6 \hat{R}$ has degree in $e_{1},\ldots, e_{7}$ is bigger than $28$, thus these elements are congruent to $0$ modulo $2^{6}\hat{R}$. Similarly, if $i\in [0, 7]$, then $\sum_{k=5}^{7}i_{k}\geq 5$ except for $e_1^7 d_7^4$. In both cases, the statement follows from Lemma \ref{lem:ntwosdnisquare}. 
	
	If $i \in [8, 13]$, then $\sum_{k=5}^{7}i_{k}\geq 4$, and the statement follows from Lemma \ref{lem:ntwosdnisquare} and (\ref{eq:eoneeightmod2}). If $i\in [14,15]$, then the statement holds by a similar argument, except for the following two elements:
	\begin{equation*}
		e_{1}^{14}d_{7}^{3},\,\, e_{1}^{15}d_{7}^{2}d_{6},
	\end{equation*}
	which are congruent to $0$ modulo $2^{6}\hat{R}$ by Corollary \ref{cor:eonefiffourtwosix}.

		\medskip
	\smallskip
	
	\noindent{\it Case 5.} Let $\deg(g)\in [1,4]$. Then, $i+\deg(h)\in [31, 34]$ and, by (\ref{eq:2tikdivisiblesjplus1}), 
	$g\equiv 0 \mod 2\hat{R}$.
	If $i\in [24,28]$, then $h\equiv 0 \mod 2\hat{R}$, thus the statement follows from (\ref{eq:eoneeightmod2}). If $i\in [16,23]$, then $\deg(h)\geq 8$, thus $\sum_{k=5}^{7}i_{k}\geq 2$. Hence, the statement follows from Lemma \ref{lem:ntwosdnisquare} and (\ref{eq:eoneeightmod2}).

	Now if $i\in [0,9]$, then $\deg(h)\geq 22$, thus by (\ref{eq:kfivetosevendivisible}) the statement holds. Similarly, if $i\in [10,12]$, then $\deg(h)\geq 19$, thus the statement follows from (\ref{eq:eoneeightmod2}) and (\ref{eq:kfivetosevendivisible}). Let $i\in [13,15]$. Since $d_{6}^{2}d_{5}\equiv d_{6}d_{5}^{2}\equiv 0 \mod 2^{4}\hat{R}$, it follows from (\ref{eq:kfivetosevendivisible}) and (\ref{eq:eoneeightmod2}) that the statement holds except for the following cases:
	\begin{equation*}
		(i, \deg(h), \deg(g))=(13, 18, 4), (15, 18, 2), (14, 18, 3).
	\end{equation*}	
	In each case, one can easily show that the statement holds except for the following three elements:
	\begin{equation}\label{eq:threeexceptionalelementss}
		e_{1}^{13}\cdot d_{7}d_{6}d_{5}\cdot d_{4},\quad e_{1}^{15}\cdot d_{7}d_{6}d_{5}\cdot d_{2},\quad e_{1}^{14}\cdot d_{7}d_{6}d_{5}\cdot d_{3}.
	\end{equation}
	The first element in (\ref{eq:threeexceptionalelementss}) is congruent to $0$ modulo $2^{6}\hat{R}$ by Corollary \ref{cor:neightupperbd}. For the second element in (\ref{eq:threeexceptionalelementss}), as $d_{2}\equiv 14e_{1}t+2e_{2} \mod 2^{2}\hat{R}$, by (\ref{eq:dsevensixfive}), we have modulo $2^{5}\hat{R}$
	\begin{equation}\label{eq:d7652}
		d_{7}d_{6}d_{5}d_{2}\equiv 2^{4}\big[ (e(1,6,7)+e(2,5,7)+e(3,5,6))t^{4}+e(5,6,7) \big]\cdot (7e_{1}t+e_{2}).
	\end{equation}
	Since each summand of the right-hand side of the equation (\ref{eq:d7652}) has degree in $e_{1},\ldots, e_{7}$ bigger than $14$, the summands multiplied by $e_{1}^{15}$ becomes trivial. Hence, $e_{1}^{15}\cdot d_{7}d_{6}d_{5}d_{2}$ is divisible by $2^{6}$ in $\hat{R}$. Similarly, for $e_{1}^{14}\cdot d_{7}d_{6}d_{5}d_{3}$, as $d_{3}\equiv 42e_{1}t^{2}+12e_{2}t+2e_{3} \mod 2^{2}\hat{R}$, the same arguments prove the statement.\end{proof}

\end{document}